\documentclass[11pt]{amsart}
\usepackage[T1]{fontenc}
\usepackage{lmodern,microtype,amsmath,amssymb,mathtools,mathrsfs}
\usepackage[margin=1.05in]{geometry}
\usepackage{booktabs,array,enumitem}
\usepackage{float}
\usepackage{longtable}
\usepackage{xcolor}
\usepackage[colorlinks=true,linkcolor=blue!55!black,citecolor=blue!55!black,urlcolor=blue!55!black]{hyperref}
\usepackage[capitalise,nameinlink,noabbrev]{cleveref}
\hypersetup{pdftitle={Diamond maximal operators: classification, moduli and oscillation envelopes},
pdfauthor={Mohammadreza Mohajer},pdfsubject={Harmonic analysis on Euclidean spaces and local fields}}
\newcolumntype{P}[1]{>{\raggedright\arraybackslash}p{#1}}
\newtheorem{theorem}{Theorem}[section]
\newtheorem{proposition}[theorem]{Proposition}
\newtheorem{corollary}[theorem]{Corollary}
\newtheorem{lemma}[theorem]{Lemma}
\theoremstyle{definition}
\newtheorem{definition}[theorem]{Definition}
\newtheorem{example}[theorem]{Example}
\theoremstyle{remark}
\newtheorem{remark}[theorem]{Remark}
\numberwithin{equation}{section}
\newcommand{\R}{\mathbb R}
\newcommand{\Z}{\mathbb Z}

\newcommand{\C}{\mathbb C}
\newcommand{\Ocal}{\mathcal O}
\newcommand{\one}{\mathbf 1}
\newcommand{\avint}{\mathop{\mathchoice{\rlap{\kern.2em\raise.1ex\hbox{$-$}}\int}{\rlap{\kern.1em\raise.1ex\hbox{$-$}}\int}{\int}{\int}}}

\newcommand{\M}{\mathcal M}
\newcommand{\D}{\mathcal D}
\newcommand{\E}{\mathbb E}
\newcommand{\BMO}{\mathrm{BMO}}
\newcommand{\BLO}{\mathrm{BLO}}
\newcommand{\loc}{\mathrm{loc}}
\newcommand{\res}{\mathrm{res}}
\newcommand{\ang}{\mathrm{ang}}
\newcommand{\ac}{\operatorname{ac}}
\newcommand{\osc}{\operatorname{osc}}

\newcommand{\ran}{\operatorname{Im}}
\newcommand{\rank}{\operatorname{rank}}
\newcommand{\tr}{\operatorname{tr}}
\newcommand{\spanof}{\operatorname{span}}
\newcommand{\conv}{\operatorname{conv}}
\newcommand{\essinf}{\operatorname*{ess\,inf}}

\newcommand{\Gr}{\operatorname{Gr}}

\newcommand{\norm}[1]{\lVert #1\rVert}
\newcommand{\pair}[2]{\langle #1,#2\rangle}
\newcommand{\doi}[1]{\href{https://doi.org/#1}{\nolinkurl{#1}}}
\setlist[enumerate]{label=\textup{(\roman*)},leftmargin=*}
\title[Diamond maximal operators]{Diamond maximal operators:\\ classification, moduli, and oscillation envelopes}
\author{Mohammadreza Mohajer}
\address{Department of Mathematics, Physics and Geology, Cape Breton University,
1250 Grand Lake Road, Sydney, NS B1P 6L2, Canada}
\email{Mohammadreza\_Mohajer@cbu.ca}
\email{mohammadreza.mohajer96@gmail.com}
\subjclass[2020]{Primary 42B25; Secondary 11S80, 15A63, 42B35, 52A21, 57S17, 43A70}
\keywords{Diamond maximal operators, angular kernels,
residue kernels, non-Archimedean local fields, kernel moduli,
unit-group actions, Grassmannians, Gram matrices,
polynomial division, finite Fourier analysis,
oscillation envelopes, Carleson energy}
\date{}
\begin{document}
\begin{abstract}
We study maximal averages generated by bounded cancellative
kernels taking values in finite-dimensional real normed spaces,
on Euclidean balls and on balls in a non-Archimedean local field
with a fixed uniformizer.
For angular kernels, pointwise domination of the full maximal
operators is equivalent to factorization of their angular
averaging maps, with the same optimal constant. Nonzero scalar
angular comparison classes are therefore projective kernel classes.
For angular kernels constant on a fixed finite measurable
partition of the direction space, symmetric convex bodies
classify exact norm measurements, Grassmannians parametrize
nonzero classes under mutual domination, and positive semidefinite
Gram matrices classify Hilbert-valued measurements.
For general kernels, the closed convex hull of the averaging
densities obtained from all scales and the dual unit ball
is a complete invariant of the full maximal operator.
A dilation example exhibits identifications of residue kernels
beyond scaling and unit-coordinate changes.
For nonzero radial residue kernels on bounded inputs, mutual
domination is classified by the unit-circle zeros of filter
polynomials, counted with multiplicity, with optimal constants
given by the $\ell^1$ norms of convolution quotients.
We identify the envelope over all normalized bounded cancellative
kernels with the best local $L^1$ approximation by constants
and prove quantitative convergence of residue envelopes as the
resolution increases. Finally, the weights of the finite Fourier transform classify
the one-level energy and the associated Carleson functional.
This functional is equivalent to the $\BMO_2$ seminorm precisely
when every nontrivial additive character of the residue field
is detected, and the weights determine the optimal comparison
constants.
\end{abstract}
\maketitle
\section{Introduction}
Maz'ya and Shaposhnikova introduced the diamond maximal operator
\begin{equation}\label{eq:mazya}
 M^\diamond f(x)=\sup_{r>0}
 \left\|\frac{1}{|B(x,r)|}\int_{B(x,r)}
 \frac{y-x}{|y-x|}f(y)\,dy\right\|_{\R^d}
\end{equation}
in their study of sharp pointwise interpolation inequalities
\cite{MazyaShaposhnikova2002}. The vector sign kernel has mean zero,
so $M^\diamond f\leq M^\sharp f$, where $M^\sharp$ is the centered
Fefferman--Stein sharp maximal function
\cite{FeffermanStein1972,CoifmanFefferman1974}. Kova{\v c} constructed
a translation-invariant Banach space on which $M^\diamond$ is bounded
and $M^\sharp$ is unbounded \cite{Kovac2026BanachDistinguishes},
answering a question of Maz'ya \cite{Mazya2018}. This separation
motivates our study of the information measured by cancellative
kernels and their dilation families.

We retain the translated and dilated ball averages in
\eqref{eq:mazya} and replace the vector sign kernel by an arbitrary
bounded mean-zero kernel with values in a finite-dimensional real
normed space $E$. We study this construction on $\R^d$ and extend it
to a non-Archimedean local field $K$, with ring of integers
$\Ocal$ and a fixed uniformizer $t$.
\Cref{def:diamond} gives the resulting operators. The output norm
combines scalar measurements, while cancellation removes constants
from the input. Kova{\v c}'s
Euclidean boundedness argument already includes compactly supported
bounded vector kernels with components of bounded variation
\cite[Section~2]{Kovac2026BanachDistinguishes}. Here we study
pointwise comparison and equality for bounded kernels without a
regularity assumption, together with the corresponding local-field
models.

We ask when kernels generate equal or mutually dominating maximal
operators, and how these classifications change under geometric
changes of coordinates. By default, domination is required
pointwise for every locally integrable input.
For general kernels, support-function duality gives a complete
invariant: the closed convex hull of the averaging densities
obtained from every scale and the dual unit ball. Inclusion of
these convex sets is equivalent to
pointwise domination, and equality of the sets is equivalent to
equality of the operators (\cref{thm:convex-invariant}). For finite
families of measurements, the optimal comparison constant is given
by a linear program (\cref{prop:lp}).

For angular kernels, classification reduces to a linear
factorization problem. The direction space $\Omega$ is $S^{d-1}$
in Euclidean space and $\Ocal^\times$ over $K$; the corresponding
directions of $z\ne0$ are respectively, $z/|z|$ and
$\ac_t(z)=t^{-v(z)}z$, with $v(t)=1$.
A bounded mean-zero function on this space gives an angular kernel
and its lift to the reference ball (\cref{def:angular-kernel}).
Write $L_\theta h=\int_\Omega\theta h\,d\nu$ for its angular
averaging map, with $\nu$ the normalized direction measure.
\Cref{thm:angular-rigidity} proves that domination of the full
angular maximal operators is equivalent to a factorization
$L_\theta=A L_\psi$, where $A:\ran(L_\psi)\to E$, with the
optimal constant equal to $\|A\|$.
In particular, nonzero scalar kernels are comparable precisely
when they are proportional (\cref{cor:scalar}). Their comparison
classes are therefore projective kernel classes; quotienting by
orthogonal transformations or multiplication of directions by units
gives the geometric moduli in \eqref{eq:angular-moduli}.

For angular kernels constant on the cells of a fixed finite
measurable partition of the direction space, norm duality classifies exact equality by
symmetric convex bodies and mutual domination by their linear spans
(\cref{thm:finite-moduli}) which the latter classes form Grassmannians.
For inner-product output norms, Gram matrices classify exact
measurements and their ranks give the minimal output dimensions
(\cref{thm:gram}). In the Euclidean family
$\theta_A(\omega)=A\omega$, positive scaling and orthogonal changes
of input coordinates reduce the classification to a simplex of
normalized eigenvalues (\cref{cor:euclidean-simplex}).
Over $K$, the finite quotients
$A_M=\Ocal^\times/(1+t^M\Ocal)$ give projective moduli and moduli
of normalized Gram matrices under the finite unit-group action.
These moduli embed compatibly as the angular resolution increases
(\cref{prop:angular-levels}). Scalar projective stabilizers record
symmetries that can distinguish fields with the same residue
cardinality (\cref{prop:stabilizers,ex:arithmetic}).

Residue kernels instead factor through the full ball quotient
$R_N=\Ocal/t^N\Ocal$ (\cref{def:residue-kernel}). Their values
record radial as well as angular information, separated by the
equivariant decomposition in \cref{prop:decomposition}.
Finite truncations connect locally constant angular kernels to
these residue models (\cref{prop:angular-truncation}).
Projective residue spaces classify scalar kernels up to scaling
and unit-coordinate changes, but a dilation can preserve the full
maximal operator while changing that kernel class
(\cref{ex:dilation-redundancy}). The distinction between a kernel
and its dilation family is therefore essential.

For radial residue kernels, full comparison on bounded inputs has
an explicit polynomial classification
(\cref{thm:radial-classification}). Radial averages become finite
filters of nested ball-average differences. Mutual domination is
classified by the zeros of the filter polynomials on the unit
circle, counted with multiplicity and exact equality is classified
by the filter up to sign and shift. The optimal comparison
constant is the $\ell^1$ norm of the convolution quotient.
We use division in the Wiener algebra of absolutely convergent
Fourier series \cite{Wiener1932}
to characterize comparison of finite filters. Realizing finitely
supported sequences of ball-average differences by bounded radial
inputs transfers this criterion to the corresponding maximal
operators with the same optimal comparison constants.
\Cref{ex:radial-domain} shows why this classification uses
$L^\infty(K)$ rather than all locally integrable inputs.

We then ask how families of measurements recover local
oscillation. Duality with constants identifies the envelope over
all normalized bounded cancellative kernels with the best local
$L^1$ approximation by a constant
(\cref{thm:median,cor:full-envelope}). Wilson's intrinsic square
function provides a related kernel supremum over tests satisfying
a H\"older condition \cite{Wilson2007}.
As the residue resolution increases, the envelopes converge
monotonically to the full envelope; on each ball, the error is
bounded by the conditional-averaging error
(\cref{thm:residue-limit}). Angular envelopes measure instead the
distance of the radial profile from constants and do not uniformly
recover full oscillation (\cref{prop:radial-blindness}).
Even residue tests of any fixed finite resolution fail to control
sharp oscillation: sums of $L$ independent digit functions give
exact maximal values and a discrepancy of order $\sqrt{L/N}$
for $L\geq N$ (\cref{thm:resolution-obstruction}).

Quadratic accumulation across scales gives a different recovery
mechanism. The local-field BMO and Carleson background is developed
in \cite{DalyKurtz1993BMOlocalfields,Kim2009}. Orthogonal
ball-average decompositions connect the construction with Haar and
$p$-adic wavelet analysis \cite{Kozyrev2002}.
For translates of a fixed level-one residue kernel, the finite
Fourier weights classify the one-level energy
(\cref{prop:fourier-energy}) and the full Carleson functional
(\cref{thm:energy}). Recovery of $\BMO_2$ is equivalent to
detection of every nontrivial additive character of the residue
field, with optimal constants given by the smallest and largest
weights. The resulting spectral moduli retain squared Fourier
amplitudes, and mutual comparison retains their support
(\cref{cor:energy-moduli}).
\Cref{app:guide} collects the distinct equivalence relations and
their complete invariants.

\section{Diamond structures and elementary estimates}\label{sec:setup}
All input functions are real-valued. Output spaces are finite-dimensional
real normed spaces unless stated otherwise. Integrals of vector-valued
functions are defined componentwise. Kernels and functions are identified
almost everywhere.

\subsection{The two geometric models}
Let $X=\R^d$, with Lebesgue measure, or let $X=K$ be a
non-Archimedean local field with norm $|\cdot|_K$.
Thus $K$ is a nondiscrete locally compact field whose topology is
induced by $|\cdot|_K$, and
\[
 |x+y|_K\leq\max\{|x|_K,|y|_K\}
 \qquad(x,y\in K).
\]
Its ring of integers and the unique maximal ideal of that ring are
\[
 \Ocal=\{x\in K:|x|_K\leq1\},
 \qquad
 \mathfrak m=\{x\in K:|x|_K<1\},
\]
respectively. The ring $\Ocal$ is compact, the ideal $\mathfrak m$
is principal, and the residue field $\Ocal/\mathfrak m$ is finite of prime power order $q$
\cite[Chapters~I--II]{SerreLocalFields}.
Fix a uniformizer $t$, meaning a generator of $\mathfrak m$. We normalize the absolute value by
$|t|_K=q^{-1}$ and additive Haar measure $\mu$ by $\mu(\Ocal)=1$;
see \cite{Taibleson1975} for the harmonic-analysis background.
When working over $K$, we write $|\cdot|$ for this normalized norm.
We use the notation
\[
 \begin{array}{c|c|c|c}
 X& B_0&\text{scale parameter }s&\delta_s z\\ \hline
 \R^d&B(0,1)&r>0&rz\\
 K&\Ocal&k\in\Z&t^kz
 \end{array}
\]
and set $B_s(x)=x+\delta_s B_0$, $b_s=\mu(\delta_s B_0)$.
Thus $b_r=|B(0,1)|r^d$ in Euclidean space and $b_k=q^{-k}$ over $K$.
Normalized measure on $B_0$ is denoted by $\nu_0=\frac{\mu|_{B_0}}{\mu(B_0)}$.
Whenever a ball kernel is regarded as a function on $X$, it is
extended by zero outside $B_0$.
Every ball containing $x$ in $K$ is one of the balls $B_k(x)$. Over $K$, each ball $B_k(x)$ is the disjoint union of $q$ balls
of scale $k+1$, called its children. Its descendants of depth $N$
are the $q^N$ balls of scale $k+N$ contained in it. Each such
descendant has measure $q^{-N}\mu(B_k(x))$. Thus increasing the
scale index $k$ corresponds to passing to smaller balls.

The classical diamond operator suggests a general construction based
on a cancellative pattern on a reference ball. In Euclidean space,
put
\[
 \theta_0(z)=\frac{z}{|z|}\quad(z\ne0),
 \qquad \theta_0(0)=0.
\]
The kernel $\theta_0$ is bounded and has mean zero on $B_0$ by
symmetry. The change of variables $y=x+rz$ rewrites
\eqref{eq:mazya} as
\[
 M^\diamond f(x)
 =\sup_{r>0}
   \left\|
     \int_{B_0}\theta_0(z)f(x+rz)\,d\nu_0(z)
   \right\|_{\R^d},
\]
where $\R^d$ carries its Euclidean norm.

This representation separates the geometry of the balls from the
pattern used to measure the input. We retain translation, dilation,
and normalization by ball measure, and allow the pattern to be any
bounded kernel with mean zero. Boundedness makes each average
well-defined for locally integrable inputs, while cancellation makes
the averages insensitive to the addition of constants. Allowing
values in a finite-dimensional real normed space accommodates both
scalar measurements and collections of measurements combined through
an output norm. Over a local field, the same construction uses
$t^kz$ in place of $rz$.

\begin{definition}\label{def:diamond}
Let $(E,\|\cdot\|_E)$ be a finite-dimensional real normed space.
A diamond kernel with values in $E$ is a function
$\theta\in L^\infty(B_0;E)$ satisfying
\[
 \int_{B_0}\theta(z)\,d\nu_0(z)=0.
\]
For a real-valued input $f\in L^1_{\loc}(X)$, its associated
average at center $x$ and scale $s$ is
\begin{align}
 T_{\theta,s}f(x)
 &=\int_{B_0}\theta(z)f(x+\delta_s z)\,d\nu_0(z)\notag\\
 &=\frac{1}{b_s}\int_{B_s(x)}
   \theta\bigl(\delta_s^{-1}(y-x)\bigr)f(y)\,d\mu(y).
 \label{eq:average}
\end{align}
The associated diamond maximal operator is
\begin{equation}\label{eq:diamond}
 \D_\theta f(x)=\sup_s\|T_{\theta,s}f(x)\|_E,
\end{equation}
where the supremum runs over the scales specified above.
For a uniformly bounded family of such kernels, the associated
envelope is obtained by taking the supremum over the kernels
as well as the scales.
\end{definition}

For $E=\R$, the construction gives scalar diamond operators.
Taking $E=\R^d$ with its Euclidean norm and $\theta=\theta_0$
recovers \eqref{eq:mazya}. More generally, if
$E=\R^r$ and $\theta=(\theta_1,\ldots,\theta_r)$, the coordinates
of $T_{\theta,s}f(x)$ are the scalar averages generated by the
kernels $\theta_j$, and the norm on $E$ combines these measurements.
The orientation $y-x$ is fixed throughout; reversing it replaces
$\theta(z)$ by $\theta(-z)$.

For a ball $B$, write $f_B=\mu(B)^{-1}\int_Bf\,d\mu$ and
\[
 \osc_B(f)=\frac{1}{\mu(B)}\int_B|f-f_B|\,d\mu,
 \qquad
 \delta_B(f)=\inf_{c\in\R}\frac{1}{\mu(B)}\int_B|f-c|\,d\mu.
\]
Set
\[
 \M f(x)=\sup_s\frac{1}{b_s}\int_{B_s(x)}|f|\,d\mu,
 \qquad \M^\sharp f(x)=\sup_s\osc_{B_s(x)}(f).
\]

\begin{proposition}\label{prop:basic}
For every $f\in L^1_{\loc}(X)$,
\begin{equation}\label{eq:basic}
 \D_\theta f\leq\norm{\theta}_\infty\sup_s\delta_{B_s(\cdot)}(f)
 \leq\norm{\theta}_\infty\M^\sharp f,
 \qquad \D_\theta f\leq\norm{\theta}_\infty\M f.
\end{equation}
Moreover $\D_\theta(f+c)=\D_\theta f$ and
$\D_\theta(af)=|a|\D_\theta f$.
\end{proposition}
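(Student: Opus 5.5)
The plan is to work with the second expression in \eqref{eq:average} at a fixed center $x$ and scale $s$, and to use the cancellation $\int_{B_0}\theta\,d\nu_0=0$ to subtract an arbitrary constant. Under the change of variables $y=x+\delta_s z$, the measure $b_s^{-1}\mu|_{B_s(x)}$ is carried to $\nu_0$; this is the identity already used in \eqref{eq:average}, and it rests on translation invariance of $\mu$ together with $\mu(\delta_s A)=b_s\mu(A)$. Consequently, for every $c\in\R$,
\[
 T_{\theta,s}f(x)=\frac{1}{b_s}\int_{B_s(x)}\theta\bigl(\delta_s^{-1}(y-x)\bigr)\bigl(f(y)-c\bigr)\,d\mu(y).
\]
The norm of a Bochner integral is at most the integral of the norm. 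On a finite-dimensional space this is elementary: apply a norming functional, or use the triangle inequality on simple approximations. Combined with $\|\theta\|_E\le\|\theta\|_\infty$ a.e., it gives
\[
 \|T_{\theta,s}f(x)\|_E\le\|\theta\|_\infty\,\frac{1}{\mu(B_s(x))}\int_{B_s(x)}|f-c|\,d\mu .
\]
Here $b_s=\mu(B_s(x))$ by translation invariance. Taking the infimum over $c$ yields $\|\theta\|_\infty\,\delta_{B_s(x)}(f)$, and taking the supremum over $s$ gives the first inequality. Choosing $c=0$ instead gives $\|\theta\|_\infty$ times the average of $|f|$ over $B_s(x)$, which proves $\D_\theta f\le\|\theta\|_\infty\M f$.

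The middle inequality is immediate. The definition of $\delta_B$ is an infimum over $c$, and $c=f_B$ is one admissible choice, so $\delta_B(f)\le\osc_B(f)$. Taking the supremum over $s$ gives $\sup_s\delta_{B_s(\cdot)}(f)\le\M^\sharp f$.

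For the invariances, linearity of $T_{\theta,s}$ gives $T_{\theta,s}(f+c)=T_{\theta,s}f+c\int_{B_0}\theta\,d\nu_0=T_{\theta,s}f$, and also $T_{\theta,s}(af)=aT_{\theta,s}f$. Homogeneity of the norm on $E$ then yields $\|aT_{\theta,s}f(x)\|_E=|a|\,\|T_{\theta,s}f(x)\|_E$, and the supremum over $s$ preserves both identities. These are valid in $[0,\infty]$ even when the supremum is infinite; the case $a=0$ is consistent with the convention $0\cdot\infty=0$.

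I expect no real obstacle. The one point requiring care is confirming the measure-normalization identity $\mu(\delta_sB_0)=b_s$ and the pushforward of $\nu_0$ to normalized measure on $B_s(x)$ in the local-field case. There this follows from $\mu(t^kA)=q^{-k}\mu(A)$, the standard scaling property of Haar measure under the modulus $|t^k|=q^{-k}$. Local integrability of $f$ ensures that every integral above is finite, since $\theta$ is bounded.
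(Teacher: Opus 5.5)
Your proof is correct and follows the paper's own argument. You use cancellation to subtract an arbitrary constant $c$, bound the norm of the average by $\norm{\theta}_\infty$ times the average of $|f-c|$, and minimize over $c$ before taking the supremum over scales; the choice $c=0$ gives the bound by $\M f$, and the invariances follow from linearity and cancellation.
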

\begin{proof}
Cancellation permits subtraction of any constant $c$ in
\eqref{eq:average}. Taking the norm, minimizing over $c$, and taking the
supremum over $s$ proves the first inequalities. Applying the triangle
inequality without subtracting a constant proves the last one. The two
invariances follow directly.
\end{proof}

The operators above are measurable. Over $K$ the scale set is countable and
each fixed-scale average is continuous in $x$: on a compact set of centers,
translation continuity in $L^1$ applies to a compactly truncated input,
while the kernel is bounded. In Euclidean space the same argument gives
continuity in $x$; continuity in $r>0$ follows from strong continuity of
translations and dilations in $L^1$ on compact parameter ranges. It suffices
to take rational radii. General envelopes used below are measurable either
as suprema of continuous functions or by their explicit oscillation formulas.
Values $+\infty$ are allowed. Products of nonnegative extended values
are interpreted with $0\cdot(+\infty)=0$.

\subsection{BMO, BLO, and the role of order}
We use the seminorms
\[
 \norm{f}_{\BMO_1}=\sup_B\osc_B(f),\qquad
 \norm{g}_{\BLO}=\sup_B\left(g_B-\essinf_B g\right),
\]
where all balls are included and $g$ is locally integrable. The first is the
usual mean-oscillation seminorm \cite{JohnNirenberg1961}.
For $f\in L^1_{\loc}(X)$ with $\norm{f}_{\BMO_1}<\infty$,
\cref{prop:basic} gives
\begin{equation}\label{eq:bmo-blo}
 \norm{\D_\theta f}_\infty\leq\norm{\theta}_\infty\norm{f}_{\BMO_1},
 \qquad
 \norm{\D_\theta f}_{\BLO}
 \leq\norm{\theta}_\infty\norm{f}_{\BMO_1}.
\end{equation}
The first estimate makes $\D_\theta f$ bounded and locally integrable.
Since $\D_\theta f$ is nonnegative, its $\BLO$ seminorm is at most
$\norm{\D_\theta f}_\infty$, which proves the second estimate.

\begin{remark}\label{rem:blo}
The $\BLO$ seminorm is not monotone under pointwise order. If
$0<\mu(A)<\mu(B)$ for a measurable subset $A$ of a ball $B$, then
$0\leq\one_A\leq1$,
whereas $\norm{1}_{\BLO}=0$ and
$\norm{\one_A}_{\BLO}\geq\mu(A)/\mu(B)>0$.
Consequently a pointwise comparison of maximal functions cannot by itself
be promoted to a comparison of their $\BLO$ seminorms.
\end{remark}

Over $K$, the standard disjoint-ball argument gives
\[
 \mu\{\M f>\lambda\}\leq\lambda^{-1}\norm{f}_1,
 \qquad \norm{\M f}_p\leq\frac{p}{p-1}\norm{f}_p\quad(1<p<\infty).
\]
For completeness, for $f\in L^1(K)$ the averages tend to zero on balls
whose measures tend to infinity. Maximal balls with average greater than
$\lambda$ therefore exist, are disjoint, and give the stronger estimate
$\lambda\mu\{\M f>\lambda\}\leq\int_{\{\M f>\lambda\}}|f|$.
Integration against $p\lambda^{p-2}d\lambda$, followed by H\"older's
inequality, gives the $L^p$ estimate, first with finite scale and spatial
truncations and then by monotone convergence and $L^p$ approximation.
\Cref{prop:basic} transfers both bounds to $\D_\theta$, with factor
$\norm{\theta}_\infty$. These estimates provide analytic boundedness. The classification below concerns the more restrictive condition of
pointwise comparison for every input.

Unless another input class is specified, pointwise domination means
an inequality
\[
 \D_\theta f(x)\leq C\D_\psi f(x)
\]
for every $f\in L^1_{\loc}(X)$ and every $x\in X$, with a finite
constant $C\geq0$ independent of $f$ and $x$. The operators are
mutually comparable when domination holds in both directions,
possibly with different constants. Exact equality means equality
of their nonnegative outputs for every input and center.
In almost-everywhere comparisons, the exceptional null set may
depend on the input.

\section{The invariant of a general maximal operator}\label{sec:convex}
For a finite-dimensional real normed space $E$, let $E^*$ denote
its real linear dual, equipped with the dual norm
\[
 \|\ell\|_{E^*}
 =\sup_{\|e\|_E\leq1}|\ell(e)|.
\]
Norm duality gives
\[
 \|e\|_E
 =\sup_{\|\ell\|_{E^*}\leq1}|\ell(e)|
 =\sup_{\|\ell\|_{E^*}\leq1}\ell(e).
\]
Thus vector-valued measurements can be studied through their scalar
projections. For an $E$-valued kernel, $\|\theta\|_\infty$ denotes
the essential supremum of $\|\theta(z)\|_E$. We call the kernel
normalized when $\|\theta\|_\infty\leq1$.

At a fixed center, each scalar projection of a diamond average is
a linear functional of the input. The maximal operator is the
supremum of these functionals over the scales and the dual unit
ball. This supremum is unchanged by taking convex combinations
of the averaging densities. It is unchanged by taking a closure
in a topology for which pairing with every admissible input is
continuous. These observations lead to the following convex
invariant.

Let $\mathscr Z=L^\infty_c(X;\R)$ denote essentially bounded functions
with compact essential support, and let $\mathscr F=L^1_{\loc}(X;\R)$.
These spaces form a separating dual pair under
$\pair{a}{f}=\int_Xaf\,d\mu$. Give $\mathscr Z$ the weak topology
$\sigma(\mathscr Z,\mathscr F)$; this is the topology of convergence of
these pairings for every $f\in\mathscr F$.
For a kernel $\theta$, define
\begin{align*}
 \mathscr A_\theta
 &=\left\{b_s^{-1}\ell(\theta(\delta_s^{-1}\,\cdot))
          \one_{\delta_sB_0}:s,\ \ell\in E^*,\ \norm{\ell}\leq1\right\},\\
 \mathscr C_\theta
 &=\overline{\conv(\mathscr A_\theta)}^{\,\sigma(\mathscr Z,\mathscr F)}.
\end{align*}
The generating set is symmetric and contains zero. All closures in
this section are taken inside $\mathscr Z$.

\begin{theorem}[Complete convex invariant]\label{thm:convex-invariant}
For diamond kernels $\theta,\psi$, possibly with different output spaces,
and $C>0$, the following statements are equivalent:
\begin{enumerate}
\item $\D_\theta f(x)\leq C\D_\psi f(x)$ for every
$f\in L^1_{\loc}(X)$ and every $x\in X$;
\item $\mathscr C_\theta\subseteq C\mathscr C_\psi$.
\end{enumerate}
In particular, the two maximal operators are equal on all inputs if and
only if their closed convex invariants are equal.
\end{theorem}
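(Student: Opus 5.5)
The plan is to express $\D_\theta f(x)$ as the support function of $\mathscr C_\theta$, evaluated at the translated input $f(x+\cdot)$, and then to apply the Hahn--Banach separation theorem in the dual pair $(\mathscr Z,\mathscr F)$.

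\emph{Step 1: the maximal operator as a support function.} Fix $x\in X$ and $f\in\mathscr F$, and write $\tau_xf=f(x+\cdot)$, which again lies in $\mathscr F$. Substituting $u=\delta_sz$ in \eqref{eq:average} shows that, for each $\ell\in E^*$,
\[
 \ell\bigl(T_{\theta,s}f(x)\bigr)=\pair{a_{s,\ell}}{\tau_xf},
 \qquad a_{s,\ell}=b_s^{-1}\ell(\theta(\delta_s^{-1}\,\cdot))\one_{\delta_sB_0}.
\]
The norm duality $\|e\|_E=\sup_{\|\ell\|\le1}\ell(e)$ then gives
\[
 \D_\theta f(x)=\sup_{a\in\mathscr A_\theta}\pair{a}{\tau_xf}.
\]
For fixed $g\in\mathscr F$, the map $a\mapsto\pair{a}{g}$ is linear and, by the definition of the topology, $\sigma(\mathscr Z,\mathscr F)$-continuous. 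Hence its supremum over $\mathscr A_\theta$ equals its supremum over $\conv(\mathscr A_\theta)$, and also over the closure. This yields
\[
 \D_\theta f(x)=h_\theta(\tau_xf),
 \qquad h_\theta(g)=\sup_{a\in\mathscr C_\theta}\pair{a}{g}\in[0,+\infty].
\]
The value is nonnegative because $0\in\mathscr A_\theta$.

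\emph{Step 2: reduction to a single center.} Every $g\in\mathscr F$ has the form $\tau_xf$, for instance with $x=0$ and $f=g$. Therefore (i) is equivalent to the inequality $h_\theta(g)\le C\,h_\psi(g)=h_{C\mathscr C_\psi}(g)$ for all $g\in\mathscr F$. The direction (ii)$\Rightarrow$(i) is then immediate from the monotonicity of support functions under inclusion.

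\emph{Step 3: the converse, which is the main point.} Suppose there is some $a_0\in\mathscr C_\theta\setminus C\mathscr C_\psi$. The set $C\mathscr C_\psi$ is convex, nonempty and $\sigma(\mathscr Z,\mathscr F)$-closed, since scaling by $C>0$ is a homeomorphism. The space $\mathscr Z$ with this weak topology is locally convex, so the Hahn--Banach separation theorem yields a continuous linear functional $\Lambda$ with
\[
 \Lambda(a_0)>\sup_{b\in C\mathscr C_\psi}\Lambda(b).
\]
The key input, and the step I expect to require the most care, is the standard fact that the continuous dual of $(\mathscr Z,\sigma(\mathscr Z,\mathscr F))$ is exactly $\{\pair{\cdot}{g}:g\in\mathscr F\}$; this is the usual description of the dual of a weak topology induced by a dual pair. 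Granting it, $\Lambda=\pair{\cdot}{g}$ for some $g\in\mathscr F$, and so $h_\theta(g)\ge\pair{a_0}{g}>C\,h_\psi(g)$. Here $C\,h_\psi(g)$ is finite because it is strictly below a real number. This contradicts Step 2 with $f=g$ and $x=0$, which proves (i)$\Rightarrow$(ii). The only routine checks needed are that the dual pair is separating, which the paper already records, and that values $+\infty$ cause no trouble; they do not, since the separating inequality forces $h_\psi(g)<\infty$.

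\emph{Step 4: the equality statement.} Apply the equivalence with $C=1$ in both directions. Equality of the operators on all inputs gives $\mathscr C_\theta\subseteq\mathscr C_\psi$ and $\mathscr C_\psi\subseteq\mathscr C_\theta$. Conversely, equal invariants give equal support functions, hence equal operators by Step 1.
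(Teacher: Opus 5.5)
Your proposal is correct and follows essentially the same route as the paper. Both proofs write $\D_\theta f(0)$ as the support function of $\mathscr C_\theta$ via norm duality and continuity of the pairing, and pass to a general center by translating the input. For the converse, both separate a point $a_0\in\mathscr C_\theta\setminus C\mathscr C_\psi$ by Hahn--Banach, using that every $\sigma(\mathscr Z,\mathscr F)$-continuous functional is pairing with some $f\in\mathscr F$.
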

\begin{proof}
Duality of the finite-dimensional output norm gives
\[
 \D_\theta f(0)=\sup_{a\in\mathscr A_\theta}\pair{a}{f}
              =\sup_{a\in\mathscr C_\theta}\pair{a}{f}.
\]
The last equality holds because the pairing is continuous in the specified
topology. Inclusion of the convex sets gives domination at zero, and applying
that assertion to $f(x+\cdot)$ gives domination at $x$.

Conversely, suppose $a_0\in\mathscr C_\theta$ lies outside
$C\mathscr C_\psi$. Separation of a point from a closed convex set in the
locally convex space $\mathscr Z$ gives a continuous linear functional
strictly separating them. Every continuous linear functional for this weak
topology is pairing with an element of $\mathscr F$. For that element $f$,
\[
 \D_\theta f(0)\geq\pair{a_0}{f}
 >\sup_{a\in C\mathscr C_\psi}\pair{a}{f}=C\D_\psi f(0),
\]
a contradiction. Applying the equivalence in both directions proves the
equality statement.
\end{proof}

This is the support-function form of the Hahn--Banach separation theorem.
Its importance here is that the invariant contains \emph{all dilates}, not
just the original kernel.

\begin{example}[Dilation redundancy]\label{ex:dilation-redundancy}
Over $K$, extend a diamond kernel $\theta$ by zero outside
$\Ocal$ and define
\[
 \psi(z)=q\theta(t^{-1}z)\one_{t\Ocal}(z).
\]
The substitution $z=tw$ gives
\[
 T_{\psi,k}f=T_{\theta,k+1}f,
 \qquad
 \D_\psi=\D_\theta.
\]
For example, take
\[
 \theta=\one_{t\Ocal}-q^{-1}\one_\Ocal,
 \qquad
 \psi=q\one_{t^2\Ocal}-\one_{t\Ocal}.
\]
On $\Ocal\setminus t\Ocal$, the first kernel equals
$-q^{-1}$ and the second vanishes. Multiplication by a unit
preserves this set, so no nonzero scalar multiple of a
unit-coordinate change of $\theta$ equals $\psi$.
Thus equality of the full maximal operators admits
identifications beyond scaling and unit-coordinate changes.
\end{example}

The convex comparison criterion becomes explicit when finitely many
scalar averaging densities are retained and these densities are
constant on a common finite partition. In this setting, each
measurement depends only on the averages of the input over the
partition cells.

Fix a center and consider two finite collections of compactly
supported scalar averaging densities
$\alpha_1,\ldots,\alpha_u$ and $\beta_1,\ldots,\beta_v$.
Suppose that these densities are constant on a common measurable
partition $P_1,\ldots,P_m$ of the union of their supports, with
$0<\mu(P_\ell)<\infty$. Define coefficient vectors
$a_i,b_j\in\R^m$ by
\[
 (a_i)_\ell=\int_{P_\ell}\alpha_i\,d\mu,
 \qquad
 (b_j)_\ell=\int_{P_\ell}\beta_j\,d\mu.
\]
For $f\in L^1_{\loc}(X)$, write
\[
 f_{P_\ell}
 =\frac{1}{\mu(P_\ell)}\int_{P_\ell}f\,d\mu,
 \qquad
 F=(f_{P_1},\ldots,f_{P_m}).
\]
Constancy on the partition cells gives
\[
 \int_X\alpha_i f\,d\mu=a_i\cdot F,
 \qquad
 \int_X\beta_j f\,d\mu=b_j\cdot F.
\]
Every vector $F\in\R^m$ is realized by taking $f=F_\ell$ on
$P_\ell$ and $f=0$ outside the union of the cells. Consequently,
comparison of the two finite families is equivalent to an inequality
between the corresponding maxima of linear functionals on $\R^m$.

A representation $a_i=\sum_j\lambda_jb_j$ certifies the estimate
\[
 |a_i\cdot F|
 \leq
 \left(\sum_j|\lambda_j|\right)
 \max_j|b_j\cdot F|.
\]
The next proposition shows that minimizing the total absolute
coefficient size gives the optimal comparison constant.

\begin{proposition}[Finite comparison certificates]\label{prop:lp}
With the notation above, the least constant $C\geq0$ satisfying
\[
 \max_{1\leq i\leq u}|a_i\cdot F|
 \leq C\max_{1\leq j\leq v}|b_j\cdot F|
 \qquad(F\in\R^m)
\]
is
\[
 C_*=
 \max_{1\leq i\leq u}
 \inf\left\{
     \sum_{j=1}^v|\lambda_j|:
     a_i=\sum_{j=1}^v\lambda_jb_j,\ 
     \lambda_j\in\R
 \right\},
\]
where the infimum of an empty feasible set is $+\infty$.
In particular, a finite comparison constant exists precisely when
\[
 \spanof\{a_1,\ldots,a_u\}
 \subseteq
 \spanof\{b_1,\ldots,b_v\}.
\]
\end{proposition}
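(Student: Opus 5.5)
The plan is to reduce to a single row $a=a_i$ and prove that the least constant $C_i$ with $|a\cdot F|\leq C\max_j|b_j\cdot F|$ for all $F\in\R^m$ equals
\[
 c(a)=\inf\Bigl\{\sum_j|\lambda_j|: a=\sum_j\lambda_jb_j\Bigr\}.
\]
Since a maximum over $i$ is bounded by $C$ exactly when each term is, the least constant for the family is $C_*=\max_iC_i$. The span statement then follows immediately, because $c(a_i)<\infty$ exactly when $a_i\in\spanof\{b_j\}$.

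The upper bound $C_i\leq c(a)$ is the certificate estimate displayed before the statement. Any representation $a=\sum_j\lambda_jb_j$ gives $|a\cdot F|\leq(\sum_j|\lambda_j|)\max_j|b_j\cdot F|$, and we take the infimum over representations. When $a$ is feasible, the infimum is attained: the feasible set is a nonempty affine subspace and $\lambda\mapsto\norm{\lambda}_1$ is coercive.

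The lower bound is the main step, and I would prove it by duality. First suppose $a\notin V=\spanof\{b_j\}$. Choose $F\in V^\perp$ with $a\cdot F\neq0$. Then every $b_j\cdot F=0$ while $a\cdot F\neq0$, so no finite constant works and $C_i=+\infty=c(a)$. Now suppose $a\in V$. Consider the linear map $B:\R^v\to\R^m$, $\lambda\mapsto\sum_j\lambda_jb_j$, and the norm $p(x)=\inf\{\norm{\lambda}_1:B\lambda=x\}$ on $V$. This is the quotient norm of $\ell^1_v/\ker B$, transported to $V$. Its unit ball is $K=\conv\{\pm b_j\}$, a symmetric convex polytope with nonempty interior in $V$.

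The dual norm of $p$ on $V^*$ is given by
\[
 p^*(\phi)=\sup_{x\in K}\phi(x)=\max_j|\phi(b_j)|.
\]
Finite-dimensional Hahn--Banach, or equivalently LP duality, gives $p(a)=\sup\{\phi(a):p^*(\phi)\leq1\}$. Every functional on $V$ extends to $\R^m$, hence has the form $\phi(x)=x\cdot F$ for some $F\in\R^m$. So we may pick $F$ with $\max_j|b_j\cdot F|\leq1$ and $a\cdot F=p(a)=c(a)$. Plugging this $F$ into the inequality gives $C_i\geq c(a)$.

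I expect the only subtlety to be the bookkeeping in the duality step. One has to identify the quotient norm's unit ball with $\conv\{\pm b_j\}$ and check that the supremum is attained. Attainment holds because the unit ball of $p^*$ is compact in $V^*$, which follows from finite dimensionality; the LP duality theorem would serve equally well.
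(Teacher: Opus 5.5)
Your proof is correct and follows essentially the same route as the paper. Both arguments identify $\max_j|b_j\cdot F|$ as the support function of $\conv\{\pm b_1,\ldots,\pm b_v\}=\{\sum_j\lambda_jb_j:\sum_j|\lambda_j|\leq1\}$ and get the lower bound from finite-dimensional separation; your norming functional for the gauge on $\spanof\{b_j\}$ is that same Hahn--Banach step. The only real difference is that the paper separates $a_i$ from $C\conv\{\pm b_j\}$ directly in $\R^m$, which handles your separate case $a_i\notin\spanof\{b_j\}$ within the same argument.
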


\begin{proof}
Let
\[
 K_b=\operatorname{conv}\{\pm b_1,\ldots,\pm b_v\}.
\]
This compact, convex, symmetric set has the representation
\[
 K_b=
 \left\{
   \sum_{j=1}^v\lambda_jb_j:
   \sum_{j=1}^v|\lambda_j|\leq1
 \right\},
\]
and its support function is
\[
 \sup_{b\in K_b}b\cdot F
 =\max_{1\leq j\leq v}|b_j\cdot F|.
\]

For a fixed $C\geq0$, the stated comparison holds if
$a_i\in CK_b$ for every $i$, by the support-function identity
and the symmetry of $K_b$. Conversely, if some $a_i$ lies
outside $CK_b$, finite-dimensional separation gives a vector
$F\in\R^m$ such that
\[
 a_i\cdot F
 >
 \sup_{b\in CK_b}b\cdot F
 =
 C\max_{1\leq j\leq v}|b_j\cdot F|,
\]
contradicting the comparison. Thus domination with constant $C$
is equivalent to $a_i\in CK_b$ for every $i$.

The coefficient representation of $K_b$ now gives the formula
for $C_*$. Whenever a representation is feasible, the infimum
is attained: a minimizing sequence of coefficient vectors has
bounded $\ell^1$ norm and therefore has a convergent subsequence
whose limit remains feasible. Feasibility for every $i$ is
equivalent to the asserted inclusion of linear spans.

Finally, each minimization is a linear program. Writing
$\lambda_j=\lambda_j^+-\lambda_j^-$ gives
\[
 \begin{aligned}
 \text{minimize}\quad&
   \sum_{j=1}^v(\lambda_j^++\lambda_j^-),\\
 \text{subject to}\quad&
   \sum_{j=1}^v(\lambda_j^+-\lambda_j^-)b_j=a_i,\\
 & \lambda_j^+,\lambda_j^-\geq0
   \qquad(1\leq j\leq v).
 \end{aligned}
\]
Taking the maximum of these optimal values over $i$ yields $C_*$.
\end{proof}

\section{Angular rigidity and operator moduli}\label{sec:angular}
\subsection{Direction spaces and radial profiles}
In Euclidean space let $(\Omega,\nu)=(S^{d-1},\sigma)$, where $\sigma$ is surface measure divided by the total surface area of the
sphere; for $d=1$ it gives mass $1/2$ to each of
$-1,1$. Thus integration against
$\sigma$ gives the average over directions. Over $K$, let $\Omega=\Ocal^\times$ be the space of directions with normalized additive Haar
measure $\nu=(1-q^{-1})^{-1}\mu|_{\Ocal^\times}$. This is invariant under
multiplication by units. Here
$\Ocal^\times=\{u\in\Ocal:|u|=1\}$
is the group of units. For $z\ne0$, its normalized valuation
$v(z)\in\Z$ gives $|z|=q^{-v(z)}$. Define its angular component by
\[
 \ac_t(z)=t^{-v(z)}z\in\Ocal^\times.
\] Then we have 
 $z=t^{v(z)}\ac_t(z)$.
\begin{definition}[Angular kernel and angular lift]
\label{def:angular-kernel} 
An \emph{angular kernel} is a function $\theta\in L^\infty(\Omega;E)$ with $\int_\Omega\theta\,d\nu=0$.
Its \emph{angular lift} to $B_0$ is $\widetilde\theta(z):=\theta(z/|z|)$ in Euclidean space and
$\widetilde\theta(z):=\theta(\ac_t(z))$ over $K$. We set the lift equal to zero at the origin, whose value does
not affect the integrals defining $T_{\theta,s}f(x)$.
Write $\D^\ang_\theta$ for the associated diamond operator generated by this
lift.
\end{definition}
The lift assigns the same kernel value to points with the same
direction. Polar integration gives
\[
 \int_{B_0}\widetilde\theta\,d\nu_0
 =\int_\Omega\theta\,d\nu=0,
\]
so $\widetilde\theta$ is a diamond kernel in the sense of
\cref{def:diamond}. For an angular kernel $\theta$, we abbreviate
$T_{\widetilde\theta,s}$ by $T_{\theta,s}$. Define the bounded linear map $L_\theta:L^1(\Omega,\nu;\R)\to E$ by
\[
 L_\theta h=\int_\Omega\theta(\omega)h(\omega)\,d\nu(\omega).
\]
At a fixed center and scale, we first average the input in the
radial variable while holding the direction fixed. The resulting
function on $\Omega$ is the radial profile $P_{x,s}f$.
The angular measurement is then obtained by applying $L_\theta$
to that profile. This gives
\begin{equation}\label{eq:angular-profile}
 T_{\theta,s}f(x)=L_\theta(P_{x,s}f),
\end{equation}
where the profiles are
\begin{align}
 P_{x,r}f(\omega)&=d\int_0^1 f(x+r\rho\omega)\rho^{d-1}\,d\rho,
                    &&X=\R^d,\label{eq:euclidean-profile}\\
 P_{x,k}f(u)&=(1-q^{-1})\sum_{j=0}^\infty q^{-j}f(x+t^{k+j}u),
                    &&X=K.\label{eq:local-profile}
\end{align}
The series converges absolutely for almost every $u$, and the profiles
belong to $L^1(\Omega)$ by Fubini's theorem.

\begin{theorem}[Angular rigidity]\label{thm:angular-rigidity}
Let $E$ and $F$ be finite-dimensional real normed spaces, and let $\theta\in L^\infty(\Omega;E)$ and
$\psi\in L^\infty(\Omega;F)$ be mean-zero angular kernels. For $C\geq0$
the following are equivalent:
\begin{enumerate}
\item $\D^\ang_\theta f(x)\leq C\D^\ang_\psi f(x)$ for every
$f\in L^1_{\loc}(X)$ and every $x$;
\item $\norm{L_\theta h}_E\leq C\norm{L_\psi h}_F$ for every
$h\in L^1(\Omega)$;
\item there is a linear map $A:\ran(L_\psi)\to E$, of norm at most $C$,
such that $L_\theta=A L_\psi$.
\end{enumerate}
The map in \textup{(iii)} is unique on $\ran (L_\psi)$, and its norm is the
least comparison constant. Existence of a finite comparison constant is
equivalent to $\ker L_\psi\subseteq\ker L_\theta$.
The same classification results if \textup{(i)} is required only almost
everywhere for each input.
\end{theorem}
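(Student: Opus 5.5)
The plan is to prove (ii)$\Rightarrow$(i), then (ii)$\Leftrightarrow$(iii), and finally (i)$\Rightarrow$(ii). The last step, even under the almost-everywhere version of (i), is where the work lies.

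\emph{(ii)$\Rightarrow$(i).} By \eqref{eq:angular-profile} we have $T_{\theta,s}f(x)=L_\theta(P_{x,s}f)$ and $T_{\psi,s}f(x)=L_\psi(P_{x,s}f)$, with the same profile $P_{x,s}f\in L^1(\Omega)$. Applying (ii) to $h=P_{x,s}f$ at each scale gives $\|T_{\theta,s}f(x)\|_E\leq C\|T_{\psi,s}f(x)\|_F$. Taking the supremum over $s$ yields (i) at every point.

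\emph{(ii)$\Leftrightarrow$(iii).} Assume (ii). Then $L_\psi h=0$ forces $L_\theta h=0$, so $\ker L_\psi\subseteq\ker L_\theta$. The formula $A(L_\psi h)=L_\theta h$ is therefore well defined and linear on $\ran(L_\psi)$, and (ii) says exactly that $\|A\|\leq C$. Uniqueness holds because $A$ is prescribed on all of $\ran(L_\psi)$. Conversely, (iii) gives $\|L_\theta h\|=\|AL_\psi h\|\leq\|A\|\|L_\psi h\|$, so (ii) holds. The least constant in (ii) is thus $\|A\|$. Because $\ran(L_\psi)$ is finite-dimensional, the kernel inclusion alone already gives a bounded $A$, which proves the finiteness criterion.

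\emph{(i)$\Rightarrow$(ii).} This is the main step. Here we must show that every $h\in L^1(\Omega)$ occurs as a profile, at least approximately. The plan is to take $f(x+z)=h(\text{direction of }z)$, the $0$-homogeneous extension.
\begin{itemize}
\item In Euclidean space, this $f$ is locally integrable.
\item Over $K$, set $f(z)=h(\ac_t(z))$. This $f$ is locally integrable, since its integral over the shell $|z|=q^{-j}$ is $q^{-j}(1-q^{-1})\int h\,d\nu$.
\end{itemize}
Then $P_{0,s}f=h$ exactly: in the Euclidean case $d\int_0^1\rho^{d-1}\,d\rho=1$, and over $K$ the geometric series sums to $1$. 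By dilation invariance every scale gives the same value, so $\D^\ang_\theta f(0)=\|L_\theta h\|_E$ and similarly for $\psi$. Applying (i) at $x=0$ gives (ii).

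For the almost-everywhere version, a single center is not enough, and we need a neighbourhood of centers. I would first reduce to $h$ continuous (Euclidean case) or locally constant on $\Ocal^\times$ (local field). This is legitimate because $L_\theta$ and $L_\psi$ are continuous on $L^1(\Omega)$ and such $h$ are dense. Next, choose an input $f$ for which the quantity $\|T_{\theta,s}f(x)\|$ at one fixed scale is close to $\|L_\theta h\|$ for all $x$ in a set of positive measure. I would then argue as follows.
\begin{itemize}
\item Over $K$, take $f(y)=h(\ac_t(y))\one_{\Ocal}(y)$ with $h$ constant on cosets of $1+t^M\Ocal$. For $|x|$ very small compared with the scale, in the ultrametric sense, the translate $x+t^{k+j}u$ has the same angular component as $t^{k+j}u$ up to high depth. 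The profile at $x$ therefore agrees with $h$ except on a tail of mass $O(q^{-J})$. So
\[
\|T_{\theta,k}f(x)\|_E\to\|L_\theta h\|_E
\]
uniformly on a neighbourhood of $0$ as $k\to-\infty$ relative to $|x|$.
\item In the Euclidean case, continuity of $h$ gives the same conclusion for $|x|\ll r$.
\end{itemize}
The left side of (i) then dominates $\|L_\theta h\|-\epsilon$ near $0$. The right side also needs an upper bound near $0$, since $\D^\ang_\psi f(x)$ involves all scales. I expect this to be the obstacle. I would handle it with an $\epsilon$-continuity estimate: uniformly for $x$ in a small ball, every scale's profile is within $\epsilon$ in $L^1$ of a rotated or dilated copy of $h$ whose image under $L_\psi$ has norm close to $\|L_\psi h\|$. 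For scales much smaller than $|x|$, the profile is nearly constant, so its $L_\psi$-image is small by cancellation. If this estimate proves delicate, the fallback is to use instead the continuity in $x$ noted after \cref{prop:basic}, arguing at the fixed scale where the value is attained. Letting $\epsilon\to0$ then gives (ii).
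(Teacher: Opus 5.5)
Your proofs of (ii)$\Rightarrow$(i), of (ii)$\Leftrightarrow$(iii) (with uniqueness, the optimal constant and the kernel criterion), and of the everywhere version of (i)$\Rightarrow$(ii) are correct and essentially the paper's. The only difference is that the paper cuts the homogeneous extension off at $B_0$, producing the factors $\min(1,r^{-d})$ and $\min(1,q^k)$, whereas your uncut extension has profile exactly $h$ at every scale.

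The almost-everywhere version, however, has a genuine gap, exactly at the obstacle you flagged. Your test inputs are singular at the origin, and for them the required upper bound on $\D^\ang_\psi f$ near $0$ fails.
\begin{itemize}
\item In the Euclidean case, $f(\lambda z)=f(z)$ gives $T_{\psi,r}f(\lambda x)=T_{\psi,r/\lambda}f(x)$, so $\D^\ang_\psi f$ is constant on rays. Letting $x\to0$ never removes the scales $r\sim|x|$, and at those scales $P_{x,r}f$ is neither close to $h$ nor nearly constant.
\item Over $K$ one similarly has $T_{\psi,k+1}f(tx)=T_{\psi,k}f(x)$ for $x\in\Ocal$ and $k\geq0$. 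Hence the values at the scales $k$ near $v(x)$ recur at every depth.
\end{itemize}
Your proposed $\epsilon$-estimate is false at these scales. Even an exact rotation would not help, since $L_\psi(g\cdot h)=L_{g^{-1}\cdot\psi}h$ has no reason to have norm near $\|L_\psi h\|_F$. Near $0$ you therefore only know $\D^\ang_\psi f\geq\|L_\psi h\|_F$. When $L_\psi h=0$, the intermediate scales can keep $\D^\ang_\psi f$ bounded below, so the a.e.\ inequality cannot force $L_\theta h=0$.

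The fallback does not close this gap. Continuity of each fixed-scale average makes $\D^\ang_\theta f$ lower semicontinuous, which handles the left side. The right side needs upper semicontinuity of $\D^\ang_\psi f$ at $0$, and a supremum of continuous functions need not have it.

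The missing idea is to test with uniformly continuous inputs. For such inputs the estimate
\[
 |\D^\ang_\psi f(x)-\D^\ang_\psi f(x')|\leq\|\psi\|_\infty\sup_z|f(x+z)-f(x'+z)|
\]
(and its analogue for $\theta$) makes both maximal functions continuous. The a.e.\ inequality then holds at every point, in particular at $0$.
\begin{itemize}
\item In Euclidean space, take $f(z)=h(z/|z|)\phi(|z|)$, with $h$ continuous and $\phi\geq0$ continuous, not identically zero, and supported in $[\alpha,\beta]$ with $\alpha>0$.
\item Over $K$, take $f=h\one_{\Ocal^\times}$ with $h$ locally constant.
\end{itemize}
Polar integration gives $T_{\theta,s}f(0)=a_sL_\theta h$ and $T_{\psi,s}f(0)=a_sL_\psi h$, with the same scalars $a_s\geq0$ and $0<\sup_sa_s<\infty$. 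Evaluating at $0$ and dividing by $\sup_sa_s$ yields $\|L_\theta h\|_E\leq C\|L_\psi h\|_F$ for such $h$, and your density step completes the proof. This is precisely the paper's argument.
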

\begin{proof}
For $h\in L^1(\Omega)$ define
$f_h(z)=h(z/|z|)\one_{B_0}(z)$ over $\R^d$ and
$f_h(z)=h(\ac_t(z))\one_{\Ocal}(z)$ over $K$.
At the origin, polar integration gives
\[
 T_{\theta,r}f_h(0)=\min(1,r^{-d})L_\theta h,
 \qquad
 T_{\theta,k}f_h(0)=\min(1,q^k)L_\theta h,
\]
respectively. Consequently $\D^\ang_\theta f_h(0)=\norm{L_\theta h}$.
This proves \textup{(i)}$\Rightarrow$\textup{(ii)}. Formula
\eqref{eq:angular-profile} proves the reverse implication.

Condition \textup{(ii)} permits the well-defined assignment
$A(L_\psi h)=L_\theta h$, and its bound is exactly $C$.
Conversely such a factorization gives \textup{(ii)}. If only the kernel
inclusion is known, the same assignment is a linear map on the
finite-dimensional normed space $\ran L_\psi$ and is therefore bounded.
Every element of $\ran L_\psi$ has the form $L_\psi h$, so the
assignment determines $A$ uniquely. Taking the supremum of
$\norm{L_\theta h}/\norm{L_\psi h}$ over nonzero images gives
the assertion about the best constant. If $\ran L_\psi=\{0\}$,
the factorization map is zero and has norm zero.

For the almost-everywhere version, use continuous angular $h$ in Euclidean
space and a nonnegative continuous radial cutoff supported in an annulus.
The resulting $f$ is continuous and compactly supported, and
$T_{\theta,r}f(0)=a_rL_\theta h$, with $a_r\geq0$ and
$0<\sup_ra_r<\infty$. Every such $f$ is uniformly continuous, and
\[
 |\D^\ang_\theta f(x)-\D^\ang_\theta f(x')|
 \leq\norm{\theta}_\infty
       \sup_z|f(x+z)-f(x'+z)|.
\]
Thus the two maximal functions are continuous, and an almost-everywhere
inequality holds at the origin. Density of continuous functions in
$L^1(S^{d-1})$ gives \textup{(ii)}. Over $K$, use locally constant angular
$h$ and the cutoff $\one_{\Ocal^\times}$. These produce continuous compactly
supported inputs and the same positive radial factor. Locally constant
functions are dense in $L^1(\Ocal^\times)$.
\end{proof}
The seminorm $h\mapsto\|L_\theta h\|_E$ induces a norm on the
finite-dimensional quotient
\[
 L^1(\Omega,\nu;\R)/\ker L_\theta.
\]
With this induced norm, the quotient is naturally isometric to
$\ran(L_\theta)$ with its induced output norm.
Thus angular comparison concerns the norms and kernels of the
angular averaging maps.
\begin{corollary}[Scalar classification]\label{cor:scalar}
Two nonzero real scalar angular maximal operators admit a pointwise
comparison in either direction if and only if their angular kernels are
nonzero scalar multiples. For $\theta=c\psi$, the optimal comparison
constant is $|c|$.
\end{corollary}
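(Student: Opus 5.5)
The plan is to reduce everything to \cref{thm:angular-rigidity} with $E=F=\R$. Let $\theta,\psi\in L^\infty(\Omega;\R)$ be nonzero mean-zero angular kernels. By the theorem, a pointwise comparison $\D^\ang_\theta\le C\D^\ang_\psi$ with finite $C$ holds if and only if $\ker L_\psi\subseteq\ker L_\theta$. Here $L_\theta h=\int_\Omega\theta h\,d\nu$ is a linear functional on $L^1(\Omega)$. It is nonzero because $\theta\ne0$ in $L^\infty=(L^1)^*$; concretely, testing against $h=\operatorname{sgn}\theta\,\one_{\{|\theta|>0\}}$ gives $L_\theta h=\int|\theta|\,d\nu>0$.

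The key step is the elementary linear-algebra fact about functionals. If $\ker L_\psi\subseteq\ker L_\theta$ with both functionals nonzero, then $L_\theta=cL_\psi$ for a scalar $c\ne0$. To see this, choose $h_0$ with $L_\psi h_0=1$, write any $h=(h-(L_\psi h)h_0)+(L_\psi h)h_0$, and apply $L_\theta$ to get $L_\theta=cL_\psi$ with $c=L_\theta h_0$. Because $L_\theta\ne0$, we have $c\ne0$, and the kernels of the two functionals then coincide. Next, the identity $\int(\theta-c\psi)h\,d\nu=0$ for all $h\in L^1$ forces $\theta=c\psi$ almost everywhere; one can test with $h=\operatorname{sgn}(\theta-c\psi)$. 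This proves that comparison in either direction implies proportionality, and by symmetry the same holds if the comparison is in the other direction. Conversely, if $\theta=c\psi$, then $L_\theta=cL_\psi$, so the kernels are nested and comparison holds in both directions.

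For the constant, I would apply the last part of \cref{thm:angular-rigidity}. Since $\psi\ne0$, the range $\ran L_\psi$ is all of $\R$, and the factorization map is $A=$ multiplication by $c$, whose operator norm on $\R$ with its absolute-value norm is $|c|$. The theorem identifies $\|A\|$ with the least comparison constant, so the optimal constant is $|c|$. Equivalently, directly, $\D^\ang_\theta f=|c|\,\D^\ang_\psi f$ by \eqref{eq:angular-profile}.

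There is no real obstacle here. The only point needing care is checking that the functionals are nonzero, so that the proportionality constant is nonzero and the implication runs both ways; this uses the duality $L^\infty=(L^1)^*$ on the finite measure space $(\Omega,\nu)$.
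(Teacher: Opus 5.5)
Your proposal is correct and follows essentially the same route as the paper, which also applies \cref{thm:angular-rigidity}, notes that the factorization map on the one-dimensional range $\ran L_\psi=\R$ is multiplication by a scalar, and identifies the kernels almost everywhere from equality of the integrals against all $h\in L^1$. Your detour through the kernel inclusion $\ker L_\psi\subseteq\ker L_\theta$ and the nested-kernel lemma for linear functionals simply spells out why that factorization map is a nonzero scalar.
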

\begin{proof}
A nonzero scalar functional has one-dimensional range. The factorization
in \cref{thm:angular-rigidity} is multiplication by a scalar. Equality of
the corresponding integrals for all $h\in L^1$ identifies the kernels
almost everywhere.
\end{proof}

Let $G=O(d)$ be the orthogonal group of $\R^d$ in the Euclidean model and $G=\Ocal^\times$ in the local-field
model $K$, acting on directions in $\Omega$ the usual way. Define
$(g\cdot\theta)(\omega)=\theta(g^{-1}\omega)$. A change of variables gives
\begin{equation}\label{eq:covariance}
 \D^\ang_{g\cdot\theta}f(x)
 =\D^\ang_\theta(f\circ g)(g^{-1}x).
\end{equation}
Hence the set
\begin{equation}\label{eq:angular-moduli}
 \mathbb P\big(L^\infty_0(\Omega;\R)\big)/G
\end{equation}
classifies nonzero scalar angular operators up to mutual pointwise domination
and these geometric changes of coordinates. Here $L^\infty_0$ denotes the
mean-zero subspace and $\mathbb P(V)$ means quotient of $V\setminus\{0\}$ by $\R^\times$. It denotes real projective space. Thus a projective point represents
a nonzero kernel up to multiplication by a nonzero real scalar.
For general measurable kernels, \eqref{eq:angular-moduli} is the set
of orbits under scaling and the direction-group action.
The finite-dimensional moduli spaces below carry the usual quotient
topology.

\subsection{Convex bodies, Grassmannians, and Gram matrices}

A finite angular model records the values of a kernel on the fibers
of a fixed quotient of the direction space. Let
$\Lambda=\{1,\ldots,m\}$, with $m\geq2$, carry the uniform probability
measure, and fix a measurable map
\[
 \pi:\Omega\to\Lambda,
 \qquad
 A_j=\pi^{-1}(\{j\}),
 \qquad
 \nu(A_j)=\frac1m.
\]
Thus the sets $A_j$ partition the original direction space $\Omega$,
and $\Lambda$ is the finite set of their labels. The map $\pi$ and
the labels are fixed throughout the comparisons below.

Let $E$ be a finite-dimensional real normed space. A list
$v=(v_1,\ldots,v_m)\in E^m$ defines an angular kernel
\[
 \theta_v(\omega)=v_{\pi(\omega)}.
\]
Conversely, every angular kernel that is constant almost everywhere
on each $A_j$ has this form, up to equality almost everywhere.
Cancellation is equivalent to
\[
 \int_\Omega\theta_v\,d\nu
 =\frac1m\sum_{j=1}^m v_j=0.
\]
For such a mean-zero list, define
\[
 L_v:\R^m\to E,
 \qquad
 L_vF=\frac1m\sum_{j=1}^m v_jF_j,
 \qquad
 E_v=\spanof\{v_1,\ldots,v_m\}.
\]
The space $E_v$, equipped with the norm inherited from $E$, is the
effective output space: every measurement lies in $E_v$, and the
vectors $L_v(me_j)=v_j$, where $e_j$ is the $j$th standard basis
vector, span it.

The connection with angular integration is given by the
cell-averaging map
\[
 Q_\pi:L^1(\Omega,\nu;\R)\to\R^m,
 \qquad
 (Q_\pi h)_j=m\int_{A_j}h\,d\nu.
\]
Direct integration gives
\begin{equation}\label{eq:finite-angular-factorization}
 L_{\theta_v}=L_vQ_\pi.
\end{equation}
Moreover, $Q_\pi$ is surjective: for any $F\in\R^m$, the function
$h_F(\omega)=F_{\pi(\omega)}$ satisfies $Q_\pi h_F=F$.
Consequently, for two mean-zero lists $v,w$ lifted through the same
map $\pi$, the inequality
\[
 \norm{L_{\theta_v}h}\leq C\norm{L_{\theta_w}h}
 \qquad\bigl(h\in L^1(\Omega,\nu;\R)\bigr)
\]
is equivalent to
\[
 \norm{L_vF}\leq C\norm{L_wF}
 \qquad(F\in\R^m),
\]
with the same optimal constant. By
\cref{thm:angular-rigidity}, these conditions are equivalent to
pointwise domination of the corresponding full angular maximal
operators, with that constant.

We now describe the finite-dimensional invariants. Put
\[
 H_m=\left\{a\in\R^m:\sum_{j=1}^m a_j=0\right\},
 \qquad
 \pair{a}{F}_m=\frac1m\sum_{j=1}^m a_jF_j.
\]
Associate to a mean-zero list $v$ the set
\[
 C_v=
 \left\{
  \bigl(\ell(v_1),\ldots,\ell(v_m)\bigr):
  \ell\in E^*,\ \norm{\ell}_{E^*}\leq1
 \right\}
 \subseteq H_m.
\]
This set collects the scalar coefficient lists obtained from the
dual unit ball. It is compact, convex, and symmetric about the
origin, meaning $C_v=-C_v$. Convex bodies below are understood
relative to their linear spans. We write $\Gr(r,H_m)$ for the
Grassmannian of $r$-dimensional real linear subspaces of $H_m$.

\begin{theorem}[Finite-dimensional moduli]\label{thm:finite-moduli}
Let $v$ and $w$ be mean-zero lists on $\Lambda$, with values in
finite-dimensional real normed spaces $E$ and $E'$, respectively.
The following statements hold.
\begin{enumerate}
\item For every $F\in\R^m$,
\[
 \norm{L_vF}_E=\sup_{a\in C_v}\pair{a}{F}_m.
\]
For $C\geq0$, the inequality
$\norm{L_vF}_E\leq C\norm{L_wF}_{E'}$ for every $F$ is
equivalent to $C_v\subseteq C C_w$. In particular, the seminorms
$F\mapsto\norm{L_vF}_E$ and $F\mapsto\norm{L_wF}_{E'}$ are
equal if and only if $C_v=C_w$.

\item Every nonempty compact convex set $C\subseteq H_m$ satisfying
$C=-C$ is realized as $C_v$, allowing the output space and its norm
to vary. The minimal effective output dimension is
$\dim\spanof C$.

\item Two nonzero seminorms $F\mapsto\norm{L_vF}_E$ and
$F\mapsto\norm{L_wF}_{E'}$ are mutually comparable if and only if
\[
 \spanof C_v=\spanof C_w.
\]
Allowing all finite-dimensional real normed output spaces, their
mutual-comparison classes are therefore parametrized by
\[
 \bigsqcup_{r=1}^{m-1}\Gr(r,H_m).
\]
\end{enumerate}
For angular kernels lifted through the fixed map $\pi$, these
classifications give exact equality and mutual pointwise domination
of the full angular maximal operators. 
\end{theorem}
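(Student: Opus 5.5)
The plan is to treat all three parts as finite-dimensional instances of the support-function duality already used in \cref{thm:convex-invariant} and \cref{prop:lp}. For (i), I would expand the definitions. For $\ell\in E^*$ one has $\ell(L_vF)=\frac1m\sum_j\ell(v_j)F_j=\pair{a_\ell}{F}_m$ with $a_\ell=(\ell(v_1),\ldots,\ell(v_m))$. Norm duality then gives $\norm{L_vF}_E=\sup_{a\in C_v}\pair{a}{F}_m$. The set $C_v$ lies in $H_m$ because the $v_j$ sum to zero. It is compact, convex and symmetric, being the linear image of the compact, convex, symmetric dual unit ball. Domination with constant $C$ then follows from $C_v\subseteq CC_w$ by taking suprema. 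For the converse I would separate a point $a\in C_v\setminus CC_w$ from the compact convex set $CC_w$ in $\R^m$ with the pairing $\pair{\cdot}{\cdot}_m$, which gives an $F$ violating the inequality. This is exactly the argument of \cref{prop:lp}. Equality of seminorms corresponds to $C=1$ in both directions.

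For (ii), let $C\subseteq H_m$ be compact, convex and symmetric, and let $S=\spanof C$, of dimension $r$. I would take $E=\R^m/S^{\perp}$, where $\perp$ is taken for $\pair{\cdot}{\cdot}_m$; equivalently $E\cong S^*$. On $E$ I would put the norm $\norm{[F]}=\sup_{a\in C}\pair{a}{F}_m$. This is a genuine norm on the quotient because $C$ is a convex body in $S$ and therefore absorbing there. The list is $v_j=[m e_j]$. Then $L_vF=[F]$, and the mean-zero condition holds because $(1,\ldots,1)\in H_m^\perp\subseteq S^\perp$. By construction the seminorm $\norm{L_vF}$ has support set $C$. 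Applying the equality case of (i) to both $C_v$ and $C$ then gives $C_v=C$; this uses the standard fact that a compact convex symmetric set is recovered from its support function. For minimality, note that any realization satisfies $\dim E_v=\dim\spanof C_v$. Indeed the map $E_v^*\to\spanof C_v$, $\ell\mapsto(\ell(v_j))_j$, is injective because the $v_j$ span $E_v$, and it is surjective onto the span by Hahn--Banach restriction.

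For (iii), mutual comparability means $C_v\subseteq cC_w$ and $C_w\subseteq c'C_v$ for some finite constants. This forces equality of the spans. Conversely, if the spans agree, both sets are convex bodies in the same finite-dimensional space $S$, and each contains a neighbourhood of $0$ in $S$. Each is therefore contained in a multiple of the other. The parametrization would follow from (ii) and the map $C\mapsto\spanof C$. Every subspace $S\subseteq H_m$ of dimension $r$ with $1\le r\le m-1$ is attained, for instance by the unit ball of $S$. Nonzero seminorms correspond to $r\ge1$, and $\dim H_m=m-1$ bounds $r$ from above. The final transfer to maximal operators would follow from \eqref{eq:finite-angular-factorization}, the surjectivity of $Q_\pi$, and \cref{thm:angular-rigidity}, as explained just before the statement.

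I expect the main obstacle to be the realization step in (ii). There one must make sure that the constructed norm on the quotient is definite. One also has to argue that $C_v$ equals $C$ itself, and not merely some set with the same support function, using closedness and convexity. Finally, the dimension count for $E_v$ has to be made precise.
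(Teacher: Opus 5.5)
Your proposal is correct and follows the paper's proof essentially step for step: the support-function and separation argument for (i), the dimension count through the map $\ell\mapsto(\ell(v_j))_j$ on $E_v^*$ (injective because the $v_j$ span $E_v$), and the span argument for (iii) are exactly what the paper uses. Your realization $E=\R^m/S^{\perp}$ with the support-function norm and $v_j=[me_j]$ is the paper's construction $E=V^*$, with the dual of the gauge norm of $C$ and $v_j(a)=a_j$, transported through the identification $\R^m/S^{\perp}\cong S^*$ given by $\pair{\cdot}{\cdot}_m$; your appeal to recovering a compact convex set from its support function plays the role of the paper's identification of the dual unit ball of $V^*$ with $C$.
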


\begin{proof}
Norm duality gives
\[
 \norm{L_vF}_E
 =\sup_{\norm{\ell}_{E^*}\leq1}
       \frac1m\sum_{j=1}^m\ell(v_j)F_j
 =\sup_{a\in C_v}\pair{a}{F}_m.
\]
The support functions of compact convex sets determine their
inclusions by finite-dimensional separation. Applying this fact to
$C_v$ and $C C_w$ proves \textup{(i)}, including $C=0$.

For \textup{(ii)}, put $V=\spanof C$. If $C=\{0\}$, take the
zero-dimensional output space. Otherwise, compactness, convexity,
symmetry, and the spanning property imply that $C$ is the unit
ball of the norm
\[
 \norm{a}_C=\inf\{\lambda>0:a\in\lambda C\}
 \qquad(a\in V).
\]
Give $E=V^*$ the dual norm, and define $v_j\in E$ by
$v_j(a)=a_j$. Since $V\subseteq H_m$, these coordinate
functionals satisfy $\sum_jv_j=0$. Under the canonical
identification $E^*=V$, the dual unit ball of $E$ is $C$, and
\[
 \bigl(a(v_1),\ldots,a(v_m)\bigr)
 =(a_1,\ldots,a_m)
 \qquad(a\in V).
\]
Thus $C_v=C$.

For any realization, the map
\[
 J_v:E^*\to\R^m,
 \qquad
 J_v(\ell)=\bigl(\ell(v_1),\ldots,\ell(v_m)\bigr)
\]
has image $\spanof C_v$ and kernel equal to the annihilator of
$E_v$. Hence $\dim\spanof C_v=\dim E_v$. The construction above
attains the asserted dimension.

For \textup{(iii)}, mutual comparison is equivalent by
\textup{(i)} to inclusion of each convex body in a positive dilate
of the other, which forces equality of their spans. Conversely,
two compact symmetric convex sets spanning the same space are
unit balls of equivalent norms on that space, so these inclusions
hold. Every nonzero subspace of $H_m$ occurs by \textup{(ii)},
for example by realizing its Euclidean unit ball. This gives the
Grassmannian parametrization.

Finally, \eqref{eq:finite-angular-factorization}, surjectivity of
$Q_\pi$, and \cref{thm:angular-rigidity} transfer the finite
comparisons to the full angular maximal operators.
\end{proof}

When the output norm comes from an inner product, the norm of
$L_vF$ is determined by the pairwise inner products of the vectors
$v_j$. Their \emph{Gram matrix} is
\[
 G_v=\bigl(\langle v_i,v_j\rangle_E\bigr)_{i,j=1,\ldots, m}.
\]
Its diagonal entries are the squared lengths of the kernel values,
and its off-diagonal entries record their inner products. For a
second list $w$ in a real Hilbert space $E'$, define $G_w$ in the
same way. Write $\one=(1,\ldots,1)^{\mathsf T}$. For real symmetric
matrices, $G\succeq0$ means
$F^{\mathsf T}GF\geq0$ for every $F\in\R^m$, and
$G\preceq H$ means $H-G\succeq0$.

\begin{theorem}[Hilbert-valued classification]\label{thm:gram}
Let $v$ and $w$ be mean-zero lists on $\Lambda$ with values in
finite-dimensional real Hilbert spaces.
\begin{enumerate}
\item The Gram matrix satisfies
\begin{equation}\label{eq:gram}
 \norm{L_vF}^2=m^{-2}F^{\mathsf T}G_vF,
 \qquad G_v\succeq0,
 \qquad G_v\one=0.
\end{equation}
Every real symmetric matrix $G\succeq0$ satisfying $G\one=0$
is realized by a mean-zero list, with minimal effective output
dimension $\rank G$.

\item For $C\geq0$,
\[
 \norm{L_vF}\leq C\norm{L_wF}
 \quad\text{for every }F\in\R^m
 \quad\Longleftrightarrow\quad
 G_v\preceq C^2G_w.
\]
Equality of the two seminorms is equivalent to $G_v=G_w$.
The same conditions characterize comparison and equality of the
full angular maximal operators lifted through $\pi$.
Moreover, $G_v=G_w$ if and only if there is a linear isometry
$U:E_v\to E_w$ satisfying $Uv_j=w_j$ for every $j$; this isometry
is unique and onto.

\item A finite comparison constant exists if and only if
$\ker G_w\subseteq\ker G_v$. If $G_w\ne0$ and this inclusion
holds, the least constant satisfies
\begin{equation}\label{eq:gram-constant}
 C_*^2=
 \sup_{\substack{F\in\R^m\\F^{\mathsf T}G_wF>0}}
 \frac{F^{\mathsf T}G_vF}{F^{\mathsf T}G_wF}.
\end{equation}
If both matrices vanish, $C_*=0$; if the kernel inclusion fails,
$C_*=+\infty$.

\item For a nonzero list, the normalized Gram matrix
$G_v/\tr G_v$ removes positive scaling of the kernel.
For $1\leq r\leq m-1$, the space of trace-one matrices
$G\succeq0$ with $G\one=0$ and $\rank G=r$ has dimension
\[
 \frac{r(2(m-1)-r+1)}2-1.
\]
\end{enumerate}
\end{theorem}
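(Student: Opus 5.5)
The plan is to reduce everything to linear algebra of the Gram matrix and then invoke \cref{thm:finite-moduli} and \cref{thm:angular-rigidity} for the transfer to maximal operators.

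For (i), expand $\norm{L_vF}^2=m^{-2}\langle\sum_iF_iv_i,\sum_jF_jv_j\rangle=m^{-2}F^{\mathsf T}G_vF$. This identity gives $G_v\succeq0$ at once. Cancellation $\sum_jv_j=0$ gives $(G_v\one)_i=\langle v_i,\sum_jv_j\rangle=0$. For realization, factor $G=B^{\mathsf T}B$ with $B$ an $r\times m$ matrix, where $r=\rank G$; one can use the spectral decomposition and keep the nonzero eigenvalues. Take $v_j$ to be the columns of $B$ in $\R^r$ with the Euclidean inner product. Then $G_v=G$. Moreover $\norm{B\one}^2=\one^{\mathsf T}G\one=0$, so the list has mean zero, and the columns span $\R^r$. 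For minimality, $\dim E_v$ equals the rank of the Gram matrix of a spanning list, since $\ker G_v=\{F:\sum F_jv_j=0\}$.

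For (ii), square the inequality and use (i) to get $F^{\mathsf T}G_vF\leq C^2F^{\mathsf T}G_wF$ for all $F$, which is exactly $G_v\preceq C^2G_w$. A symmetric matrix is determined by its quadratic form, so equality of the seminorms is the same as $G_v=G_w$. The transfer to full angular maximal operators is the final paragraph of \cref{thm:finite-moduli}: apply \eqref{eq:finite-angular-factorization}, surjectivity of $Q_\pi$, and \cref{thm:angular-rigidity}.

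For the isometry, suppose $G_v=G_w$ and define $U(\sum F_jv_j)=\sum F_jw_j$. This is well defined because $\norm{\sum F_jv_j}^2=F^{\mathsf T}G_vF=F^{\mathsf T}G_wF=\norm{\sum F_jw_j}^2$, so the left side vanishing forces the right side to vanish. The same identity shows $U$ is isometric. It is onto $E_w$ because the $w_j$ span $E_w$, and it is unique because the $v_j$ span $E_v$. The converse is immediate, since isometries preserve inner products.

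For (iii), the kernels are identified with null spaces: $\ker G_w=\{F:L_wF=0\}$, because $F^{\mathsf T}G_wF=0$ if and only if $G_wF=0$ for a positive semidefinite matrix. Then \cref{thm:angular-rigidity}, in its finite form, gives that a finite constant exists exactly when $\ker L_w\subseteq\ker L_v$. The formula \eqref{eq:gram-constant} is the squared supremum of $\norm{L_vF}/\norm{L_wF}$ over $F$ with $L_wF\ne0$. On $\ker L_w$ both sides vanish, which justifies restricting to $F^{\mathsf T}G_wF>0$. The degenerate cases follow directly.

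For (iv), trace normalization removes positive scaling because $G_{cv}=c^2G_v$. The main computational step is the dimension count. The conditions $G\succeq0$ and $G\one=0$ say that $G$ is a positive semidefinite form on $H_m\cong\R^{m-1}$, extended by zero on $\R\one$. So the set of such matrices is identified with the positive semidefinite $(m-1)\times(m-1)$ symmetric matrices, via an orthonormal basis of $H_m$; this identification preserves trace. The rank-$r$ positive semidefinite $n\times n$ matrices form a smooth manifold of dimension $nr-r(r-1)/2$. One way to see this is the parametrization $G=B^{\mathsf T}B$ with $B\in\R^{r\times n}$ of full rank, taken modulo $O(r)$: this gives $nr-\tfrac{r(r-1)}2$. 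With $n=m-1$ this equals $\tfrac{r(2(m-1)-r+1)}2$. The trace-one condition is a regular level set, because scaling is transverse to it, so it removes one dimension. The only point needing care is smoothness of the rank-$r$ stratum, which I would cite as the standard orbit description under the congruence action of $GL(n)$.
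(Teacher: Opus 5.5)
Your proposal is correct and follows the paper's proof essentially step for step in (i)--(iii): the Gram expansion, the factorization $G=B^{\mathsf T}B$ with $\rank G$ rows, polarization, the isometry on the spanning list, and the kernel criterion. The only small difference there is in (iii), where the paper gets finiteness from positive definiteness of $G_w$ on $(\ker G_w)^\perp$ instead of citing the factorization criterion. In (iv) you count dimensions through $B^{\mathsf T}B$ modulo $O(r)$, or the congruence orbit, and use transversality for the trace condition, whereas the paper adds the dimension of an $r$-dimensional image in $H_m$ to that of a positive definite form on it. Both give $r(m-1)-r(r-1)/2-1$, and your version also justifies the manifold structure that the paper's count takes for granted.
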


\begin{proof}
Expanding the squared norm proves the first identity in
\eqref{eq:gram}. A Gram matrix is positive semidefinite, and
$\sum_jv_j=0$ gives $G_v\one=0$.

Conversely, factor an admissible matrix as $G=V^{\mathsf T}V$,
where $V$ has $\rank G$ rows, and take the columns of $V$ as
the kernel values. Since
\[
 \norm{V\one}^2=\one^{\mathsf T}G\one=0,
\]
the columns sum to zero. Their span has dimension $\rank G$,
which is necessary in every realization because the rank of a
Gram matrix equals the dimension of the span of its vectors.

The comparison assertion in \textup{(ii)} follows directly from
\eqref{eq:gram}. Equality of the quadratic forms determines the
symmetric matrices by polarization. If $G_v=G_w$, the assignment
\[
 \sum_j a_jv_j\longmapsto\sum_j a_jw_j
\]
is well-defined and norm preserving: both squared norms equal
$a^{\mathsf T}G_va$. It is a linear isometry onto $E_w$ and
is uniquely determined by the spanning list. Conversely, such an
isometry preserves every Gram entry. The conclusions for angular
maximal operators follow from the fixed-quotient factorization
and \cref{thm:angular-rigidity}.

If the kernel inclusion in \textup{(iii)} fails, some
$F\in\ker G_w$ satisfies $F^{\mathsf T}G_vF>0$, so no finite
comparison constant exists. If the inclusion holds and $G_w\ne0$,
both quadratic forms depend only on the component of $F$ in
$(\ker G_w)^\perp$, where $G_w$ is positive definite. The
quotient in \eqref{eq:gram-constant} is therefore bounded, and
its supremum is exactly the least squared comparison constant.
The case of two zero matrices is immediate.

For \textup{(iv)}, scaling the list by $a>0$ multiplies its Gram
matrix by $a^2$. Put $d_0=m-1$. An admissible matrix restricts to
a positive semidefinite form on $H_m$. A rank-$r$ such matrix is
specified by its $r$-dimensional image, contributing
$r(d_0-r)$ parameters, and a positive definite symmetric form on
that image, contributing $r(r+1)/2$ parameters. Trace normalization
removes one parameter. Their sum gives the stated dimension.
\end{proof}

Thus, for the fixed angular quotient, Gram matrices classify
exact norm measurements, and their images classify mutual
domination. In the scalar case $G_v=vv^{\mathsf T}$, its normalized
rank-one matrix represents the projective class of $v$ in
$\mathbb P(H_m)$.

The geometric action on a finite model depends on the partition.
If a geometric transformation $g$ permutes the cells $A_j$ up to
null sets, let $P_g$ be its induced permutation matrix, with
$(P_gv)_j=v_{\sigma_g^{-1}(j)}$ when $gA_j=A_{\sigma_g(j)}$.
Then
\[
 G_{P_gv}=P_gG_vP_g^{\mathsf T}.
\]
This gives the action of the subgroup preserving the partition.
A general geometric transformation transports the partition
together with the kernel. For the local-field quotient $A_M$
below, multiplication by every unit permutes the fibers, and the
induced finite action is the action of $A_M$.

\begin{example}[The role of the angular partition]
\label{ex:angular-partition}
On $\Omega=S^1$ with normalized surface measure, consider
\[
 \theta(\omega)=\operatorname{sgn}(\omega_1),
 \qquad
 \psi(\omega)=\operatorname{sgn}(\omega_2).
\]
Each kernel is represented by the list $(1,-1)$ on its own
partition into two semicircles. These two lists have the same
Gram matrix
\[
 \begin{pmatrix}1&-1\\-1&1\end{pmatrix}.
\]
The quotient maps are different, however, and
\[
 L_\theta\theta=1,\qquad L_\psi\theta=0,
 \qquad
 L_\psi\psi=1,\qquad L_\theta\psi=0.
\]
By \cref{thm:angular-rigidity}, neither angular maximal operator
is dominated by a finite multiple of the other with the input
coordinates fixed.

On the common partition into the four quadrants, listed
counterclockwise starting with the first quadrant, the kernels
are represented by
\[
 v=(1,-1,-1,1)^{\mathsf T},
 \qquad
 w=(1,1,-1,-1)^{\mathsf T}.
\]
Their Gram matrices are $vv^{\mathsf T}$ and $ww^{\mathsf T}$,
with distinct one-dimensional images. The fixed-quotient
classification therefore detects their incomparability.
A quarter-turn carries $\theta$ to $\psi$, so the two kernels
are geometrically equivalent under the separate coordinate
equivalence.
\end{example}

\begin{remark}[Refining angular partitions]
\label{rem:angular-refinement}
Kernels presented on different finite partitions can be compared
on a common refinement. The refined cells need not have equal
measure. If their positive measures are $p_1,\ldots,p_n$, write
$v_j$ for the kernel values and $F_j$ for the cell averages.
Then the relevant map and cancellation condition are
\[
 L_{v,p}F=\sum_{j=1}^n p_jv_jF_j,
 \qquad
 \sum_{j=1}^n p_jv_j=0.
\]
These weighted formulas reduce to the finite linear algebra above
by setting $\widetilde v_j=np_jv_j$, since
$\sum_j\widetilde v_j=0$ and $L_{\widetilde v}=L_{v,p}$.
For Hilbert-valued lists, with
$D_p=\operatorname{diag}(p_1,\ldots,p_n)$, the corresponding
quadratic identity is
\[
 \norm{L_{v,p}F}^2
 =F^{\mathsf T}D_pG_vD_pF,
 \qquad
 G_{\widetilde v}=n^2D_pG_vD_p.
\]
Thus comparison on a refinement retains the actual cell measures.
\end{remark}
\subsection{An explicit Euclidean moduli space}
Equip $\R^d$ and $\R^r$ with their Euclidean norms. For a real matrix
$A:\R^d\to\R^r$, put $\theta_A(\omega)=A\omega$ and
$G_A=A^{\mathsf T}A$.

\begin{corollary}\label{cor:euclidean-simplex}
The maximal operators $\D^\ang_{\theta_A}$ and
$\D^\ang_{\theta_B}$ are equal if and only if $G_A=G_B$.
For $C\geq0$, the inequality
$\D^\ang_{\theta_A}f\leq C\D^\ang_{\theta_B}f$ holds for every locally
integrable input if and only if $G_A\preceq C^2G_B$. Up to positive scaling and orthogonal changes of
input coordinates, the nonzero operators are classified by
\begin{equation}\label{eq:simplex}
 \{(\lambda_1,\ldots,\lambda_d):
     \lambda_1\geq\cdots\geq\lambda_d\geq0,\
     \lambda_1+\cdots+\lambda_d=1\}.
\end{equation}
The coordinates $\lambda_i$ are the eigenvalues of
$G_A/\tr G_A$, listed in decreasing order.
If output dimension is bounded by $r$, retain only points with at most $r$
nonzero coordinates. Mutual domination, after an orthogonal coordinate
change, is classified solely by rank.
\end{corollary}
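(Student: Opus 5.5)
The plan is to reduce everything to \cref{thm:angular-rigidity} by computing the angular averaging map of $\theta_A$. For $h\in L^1(S^{d-1})$ we have $L_{\theta_A}h=A\,m(h)$, where
\[
 m(h)=\int_{S^{d-1}}\omega h(\omega)\,d\sigma(\omega)\in\R^d .
\]
The first step is to show that $m:L^1(S^{d-1})\to\R^d$ is surjective. Taking $h(\omega)=\omega\cdot\xi$ gives $m(h)=d^{-1}\xi$, because $\int\omega\omega^{\mathsf T}\,d\sigma=d^{-1}I$ (for $d=1$ this reads $\int\omega^2\,d\sigma=1$, consistent with the two-point measure). Note also that $\theta_A$ has mean zero by symmetry, so it is an angular kernel.

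With surjectivity, condition (ii) of \cref{thm:angular-rigidity} becomes
\[
 |Ax|\leq C|Bx|\qquad(x\in\R^d).
\]
Squaring, this is $x^{\mathsf T}A^{\mathsf T}Ax\leq C^2x^{\mathsf T}B^{\mathsf T}Bx$, that is, $G_A\preceq C^2G_B$. Hence domination for all locally integrable inputs is equivalent to $G_A\preceq C^2G_B$. For equality, take $C=1$ in both directions: $\D^\ang_{\theta_A}=\D^\ang_{\theta_B}$ if and only if $|Ax|=|Bx|$ for all $x$, if and only if $G_A=G_B$ by polarization.

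For the moduli, the next step is to apply covariance \eqref{eq:covariance}. For $g\in O(d)$ we have $g\cdot\theta_A=\theta_{Ag^{-1}}$, and therefore
\[
 G_{Ag^{-1}}=gG_Ag^{\mathsf T}.
\]
Positive scaling $A\mapsto aA$ multiplies $G_A$ by $a^2$. So the classes of nonzero operators, up to these operations, correspond to orthogonal conjugacy classes of nonzero positive semidefinite matrices $G_A$ modulo positive scalars. Normalizing to trace one and applying the spectral theorem gives exactly the ordered eigenvalue simplex \eqref{eq:simplex}. Every point of the simplex is realized: take $A=\operatorname{diag}(\sqrt{\lambda_i})$, with $r=d$ or with $r$ equal to the number of nonzero $\lambda_i$. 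Since $\rank G_A=\rank A\leq r$, bounding the output dimension by $r$ restricts the simplex to points with at most $r$ nonzero coordinates, and all such points are attained with $r$ rows.

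For mutual domination, the domination criterion gives mutual comparability if and only if $G_A$ and $G_B$ are each dominated by a multiple of the other, i.e., $\ker G_A=\ker G_B$. This is the same as equality of images, since the matrices are symmetric. After an orthogonal change of coordinates, any two images of equal dimension can be aligned, so the classes are determined by rank alone. Conversely, rank is an invariant, since rank is preserved by both comparison and conjugation.

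I expect no serious obstacle. The only care point is the surjectivity of $m$, including the degenerate case $d=1$, together with the bookkeeping that orthogonal coordinate changes act by conjugation on $G_A$.
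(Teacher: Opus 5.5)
Your proposal is correct and follows the paper's proof: surjectivity of $h\mapsto\int_{S^{d-1}}\omega h\,d\sigma$ via $\int\omega\omega^{\mathsf T}\,d\sigma=I/d$, then \cref{thm:angular-rigidity} to reduce to $|Ax|\leq C|Bx|$, then the spectral theorem, trace normalization, and comparison of kernels of positive semidefinite forms. Beyond the paper's sketch, you also spell out the covariance computation $G_{Ag^{-1}}=gG_Ag^{\mathsf T}$, the realization of each simplex point by $A=\operatorname{diag}(\sqrt{\lambda_i})$, and the check of the case $d=1$.
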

\begin{proof}
The map $h\mapsto\int_{S^{d-1}}\omega h(\omega)\,d\sigma$ is onto
$\R^d$: for $h(\omega)=d\pair{a}{\omega}$ its value is $a$, because
$\int\omega\omega^{\mathsf T}d\sigma=I/d$. Apply
\cref{thm:angular-rigidity} to the maps $a\mapsto Aa$ and $a\mapsto Ba$.
The spectral theorem then identifies orthogonal conjugacy classes by
eigenvalues. Trace normalization gives \eqref{eq:simplex}.
Two positive semidefinite forms are mutually comparable exactly when their
kernels agree, and orthogonal transformations identify subspaces of the
same dimension.
\end{proof}

The original vector sign kernel is $A=I_d$ and corresponds to
$(1/d,\ldots,1/d)$. Scalar coordinate kernels correspond to rank-one
forms. Two nonparallel scalar coordinate directions are incomparable before
rotating coordinates, as follows immediately from the corollary.

\subsection{Angular moduli over finite unit quotients for a local field}
For $M\geq1$ let
\[
 A_M=\Ocal^\times/(1+t^M\Ocal),\quad m_M=(q-1)q^{M-1},\quad
 V_M=\left\{\theta:A_M\to\R:\sum_{a\in A_M}\theta(a)=0\right\}.
\]
An angular kernel factors through $A_M$ precisely when it is
constant on every coset $u(1+t^M\Ocal)$.
Equivalently, its value depends only on the residue class of the
unit direction modulo $t^M\Ocal$.
Every coset has normalized unit measure $1/m_M$. Scalar angular
comparability classes, up to multiplication of directions by units, are
\begin{equation}\label{eq:finite-angular-moduli}
 \mathfrak M_M^\ang=\mathbb P(V_M)/A_M.
\end{equation}
For $m_M\geq2$ this is a compact quotient of
$\R\mathbb P^{m_M-2}$ by the finite group $A_M$. If $m_M=1$, $V_M=0$ and the moduli space is empty.
In particular there is no nonzero level-one angular kernel when $q=2$.

We next consider kernels with values in a finite-dimensional real
Hilbert space $E$, meaning a real inner-product space equipped
with the norm
\[
 \norm{e}_E=\sqrt{\langle e,e\rangle_E}.
\]
Fix the quotient $A_M$ and put $m=m_M$. An angular kernel
factoring through $A_M$ is represented by a list
$v=(v_a)_{a\in A_M}$ satisfying $\sum_a v_a=0$.
Its Gram matrix is
\[
 G_v=\bigl(\langle v_a,v_b\rangle_E\bigr)_{a,b\in A_M}.
\]
By Gram-matrix classification in
\cref{thm:gram}, this matrix determines the list up to a linear
isometry of its effective output space
$E_v=\spanof\{v_a:a\in A_M\}$. Conversely, every real symmetric
positive semidefinite matrix $G$ satisfying $G\one=0$ is the
Gram matrix of such a mean-zero list, allowing the output
dimension to vary.

We identify two nonzero lists $v$ and $w$ when
\[
 w_a=s\,U v_{u^{-1}a}
 \qquad(a\in A_M)
\]
for some $s>0$, some $u\in A_M$, and a linear isometry
$U:E_v\to E_w$ onto the effective output space of $w$.
These operations represent positive scaling, an isometric
change of output coordinates, and multiplication of the unit
directions by $u$, respectively.

Let $P_u$ be the permutation matrix defined by
 $(P_uF)(a)=F(u^{-1}a)$.
The corresponding transformation of Gram matrices is
 $G_w=s^2P_uG_vP_u^{\mathsf T}$.
Since a nonzero list satisfies
\[
 \tr G_v=\sum_{a\in A_M}\norm{v_a}_E^2>0,
\]
positive scaling is removed by the normalization
 $\widehat G_v=\frac{G_v}{\tr G_v}$.
Consequently, the two lists are equivalent precisely when
\[
 \widehat G_w=P_u\widehat G_vP_u^{\mathsf T}
 \qquad\text{for some }u\in A_M.
\]
Indeed, this identity gives
$G_w=s^2P_uG_vP_u^{\mathsf T}$ with
$s=(\tr G_w/\tr G_v)^{1/2}$, and equality of Gram matrices
then supplies the required output isometry.

The normalized moduli space is therefore
\[
 \mathscr G_m/A_M,
 \qquad
 \mathscr G_m=
 \left\{
  G\in\R^{A_M\times A_M}:
  G=G^{\mathsf T},\ G\succeq0,\ G\one=0,\ \tr G=1
 \right\},
\]
where $A_M$ acts by $G\mapsto P_uGP_u^{\mathsf T}$.
For output dimension at most $r$, the corresponding space is
\[
 \{G\in\mathscr G_m:\rank G\leq r\}/A_M,
\]
because $\rank G_v=\dim E_v$.
These spaces are compact: the positive semidefinite trace-one
matrices form a compact set, the rank bound is closed, and $A_M$ is finite. Finally, \cref{thm:angular-rigidity,thm:gram} identify these
same spaces with the classes of the corresponding angular
maximal operators under positive scaling and unit-coordinate changes.

Passing from level $M$ to level $M+1$ refines the angular
resolution. Let $\pi_M:A_{M+1}\to A_M$
be reduction. Each fiber of this surjective group homomorphism
has $q$ elements. Its pullback matrix is the linear map
\[
 P_M:\R^{A_M}\to\R^{A_{M+1}},
 \qquad
 (P_MF)(b)=F(\pi_M(b)).
\]
Thus $P_M$ repeats each coarse-level value on its $q$ finer
labels. Its columns have disjoint supports and squared length
$q$, so $P_M^{\mathsf T}P_M=qI$.
In particular, $P_M$ is injective and maps $V_M$ into $V_{M+1}$.

For a normalized Gram matrix $G\in\mathscr G_{m_M}$, the
unnormalized Gram matrix of the pulled-back list is
$P_MGP_M^{\mathsf T}$, whose trace is $q$. Its normalized
representative is therefore
$\Phi_M(G)=q^{-1}P_MGP_M^{\mathsf T}$.
For a group $\Gamma$ acting on a set $Y$, write
\[
 \operatorname{Stab}_\Gamma(y)
 =\{g\in\Gamma:g\cdot y=y\},\, \text{ for }y\in Y.
\]
The stabilizers below refer to points of the projective or
normalized Gram space before taking the quotient by the unit group.

\begin{proposition}[Compatibility between angular levels]
\label{prop:angular-levels}
The following maps between orbit spaces are well-defined and
injective:
\[
 \begin{aligned}
 \mathbb P(V_M)/A_M
 &\longrightarrow \mathbb P(V_{M+1})/A_{M+1},\\
 A_M\cdot[\theta]
 &\longmapsto A_{M+1}\cdot[P_M\theta],
 \end{aligned}
\]
and
\[
 \begin{aligned}
 \mathscr G_{m_M}/A_M
 &\longrightarrow \mathscr G_{m_{M+1}}/A_{M+1},\\
 A_M\cdot G
 &\longmapsto A_{M+1}\cdot\Phi_M(G).
 \end{aligned}
\]
The Gram map preserves rank.

For every nonzero $\theta\in V_M$,
\begin{equation}\label{eq:scalar-stabilizer-refinement}
 \operatorname{Stab}_{A_{M+1}}([P_M\theta])
 =
 \pi_M^{-1}\bigl(\operatorname{Stab}_{A_M}([\theta])\bigr).
\end{equation}
For every $G\in\mathscr G_{m_M}$,
\begin{equation}\label{eq:gram-stabilizer-refinement}
 \operatorname{Stab}_{A_{M+1}}(\Phi_M(G))
 =
 \pi_M^{-1}\bigl(\operatorname{Stab}_{A_M}(G)\bigr).
\end{equation}
\end{proposition}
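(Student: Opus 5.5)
The whole proposition rests on one equivariance identity for the pullback, which I would establish first:
\[
 P_{\pi_M(b)}P_M=P_MP_b\qquad(b\in A_{M+1}),
\]
where the left $P$ acts on $\R^{A_M}$ and the right on $\R^{A_{M+1}}$. Both sides send $F$ to $c\mapsto F(\pi_M(b)^{-1}\pi_M(c))=F(\pi_M(b^{-1}c))$, because $\pi_M$ is a group homomorphism. This identity gives well-definedness of both maps. For the scalar map, $P_M$ is linear and injective and maps $V_M$ into $V_{M+1}$, so it induces a map $\mathbb P(V_M)\to\mathbb P(V_{M+1})$. By the identity, $[P_M(P_u\theta)]=P_b[P_M\theta]$ for any lift $b$ of $u$, so the image depends only on the $A_M$-orbit. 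Surjectivity of $\pi_M$ guarantees such a lift exists. For the Gram map, I would check
\[
 \Phi_M(P_uGP_u^{\mathsf T})=P_b\Phi_M(G)P_b^{\mathsf T}.
\]
I would also check that $\Phi_M(G)\in\mathscr G_{m_{M+1}}$: it is symmetric and positive semidefinite, $P_M^{\mathsf T}\one=q\one$ gives $\Phi_M(G)\one=0$, and the trace is $q^{-1}\tr(GP_M^{\mathsf T}P_M)=1$. Rank is preserved because $P_M$ is injective and $P_M^{\mathsf T}$ is surjective, so $\rank(P_MGP_M^{\mathsf T})=\rank G$.

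For injectivity, I would use the left inverse $q^{-1}P_M^{\mathsf T}$ of $P_M$. Suppose $[P_M\theta']=P_b[P_M\theta]$. The identity gives $P_M\theta'=cP_MP_{\pi_M(b)}\theta$ for some $c\neq0$. Applying $q^{-1}P_M^{\mathsf T}$ yields $\theta'=cP_{\pi_M(b)}\theta$, so the two classes lie in the same $A_M$-orbit. The Gram case is the same argument: conjugating the equation $\Phi_M(G')=P_b\Phi_M(G)P_b^{\mathsf T}=\Phi_M(P_uGP_u^{\mathsf T})$ by the left inverse recovers $G'=P_uGP_u^{\mathsf T}$ with $u=\pi_M(b)$.

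The stabilizer formulas then follow from the same two tools, using $P_M$ in one direction and its left inverse in the other. Fix $b\in A_{M+1}$ and put $u=\pi_M(b)$. Then $b$ fixes $[P_M\theta]$ exactly when $P_MP_u\theta=cP_M\theta$ for some $c\neq0$. By injectivity of $P_M$ this holds exactly when $P_u\theta=c\theta$, that is, when $u\in\operatorname{Stab}_{A_M}([\theta])$. This proves \eqref{eq:scalar-stabilizer-refinement}. Likewise, $P_b\Phi_M(G)P_b^{\mathsf T}=\Phi_M(G)$ is equivalent to $\Phi_M(P_uGP_u^{\mathsf T})=\Phi_M(G)$. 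Since $\Phi_M$ is injective, this is equivalent to $P_uGP_u^{\mathsf T}=G$, which proves \eqref{eq:gram-stabilizer-refinement}.

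The only genuine step is the equivariance identity, and in particular keeping the convention $(P_uF)(a)=F(u^{-1}a)$ consistent at both levels. Everything else is linear algebra from $P_M^{\mathsf T}P_M=qI$.
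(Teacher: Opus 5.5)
Your argument is correct and is essentially the paper's proof: the intertwining identity for the pullback, the left inverse $q^{-1}P_M^{\mathsf T}$ obtained from $P_M^{\mathsf T}P_M=qI$ for injectivity, and the same reasoning applied to a single class for the stabilizer formulas. One slip: your displayed identity has $b$ and $\pi_M(b)$ swapped, so as written the compositions are not defined; the identity that your pointwise verification and every later step actually use is $P_bP_M=P_MP_{\pi_M(b)}$, which is the one the paper states.
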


\begin{proof}
Let $\one_M$ denote the constant vector on $A_M$. Then
\[
 P_M\one_M=\one_{M+1},
 \qquad
 P_M^{\mathsf T}\one_{M+1}=q\one_M.
\]
Consequently, $\Phi_M(G)$ is positive semidefinite,
annihilates $\one_{M+1}$, and satisfies
\[
 \tr\Phi_M(G)
 =q^{-1}\tr(P_M^{\mathsf T}P_MG)
 =\tr G=1.
\]
Since $P_M$ has full column rank, multiplication by $P_M$ and
$P_M^{\mathsf T}$ preserves the rank of $G$. Thus
$\Phi_M(G)\in\mathscr G_{m_{M+1}}$ with the same rank.
Both pullback constructions are injective, as is seen from
\[
 \theta=q^{-1}P_M^{\mathsf T}(P_M\theta),
 \qquad
 G=q^{-1}P_M^{\mathsf T}\Phi_M(G)P_M.
\]

For $h\in A_{M+1}$, let $P_h$ be its permutation matrix on
the finer labels, and let $P_{\pi_M(h)}$ be the corresponding
permutation matrix on the coarse labels. Pullback intertwines
these actions:
\[
 P_hP_M=P_MP_{\pi_M(h)}.
\]
Hence
\[
 P_h\Phi_M(G)P_h^{\mathsf T}
 =
 \Phi_M\bigl(P_{\pi_M(h)}GP_{\pi_M(h)}^{\mathsf T}\bigr).
\]
Surjectivity of $\pi_M$ shows that each coarse-level orbit
maps into one finer-level orbit. Conversely, an equivalence
between two pulled-back projective classes or Gram matrices
descends to the coarse level by these identities and injectivity.
This proves that the maps on orbit spaces are well-defined and
injective.

The same argument with a single class proves
\eqref{eq:scalar-stabilizer-refinement} and
\eqref{eq:gram-stabilizer-refinement}.
\end{proof}

These maps allow each level to be identified with a subset of the
next. Every locally constant angular kernel on the compact space
$\Ocal^\times$ factors through some $A_M$: a finite cover by
cosets on which the kernel is constant admits a common finer
level. The resulting unions, taken using the identifications
above, therefore parametrize nonzero locally constant angular
kernels under the respective equivalence relations.

The stabilizer identities explain how symmetries behave under
refinement. In particular, every pulled-back scalar projective
class is fixed by $\ker\pi_M$. If the original class has trivial
stabilizer, its pullback has stabilizer exactly $\ker\pi_M$,
of order $q$. Thus the pullback can represent the same angular
function on $\Ocal^\times$ while its stabilizer grows with the
ambient finite unit group.

We now determine the scalar projective fixed loci at a fixed
level. A projective class $[a]$ records a nonzero scalar function on \(A_M\)
up to nonzero real scaling. A permutation fixes $[a]$ precisely
when it preserves the line spanned by $a$.

\begin{proposition}[Projective stabilizers]\label{prop:stabilizers}
Let a finite group $\Gamma$ act on $m\geq2$ labels by
permutations. Let $P_g$ be the permutation matrix of
$g\in\Gamma$, acting on
\[
 H_m=\left\{a\in\R^m:\sum_{j=1}^m a_j=0\right\}.
\]
Write $c(g)$ for the number of cycles of this permutation,
including cycles of length one, and $e(g)$ for the number of
cycles of even length.

\begin{enumerate}
\item The locus fixed by $g$ in $\mathbb P(H_m)$ is
\[
 \operatorname{Fix}_{\mathbb P(H_m)}(g)
 =F_g^+\sqcup F_g^-,
\]
where
\[
 F_g^+=\mathbb P\bigl(\ker(P_g-I)\cap H_m\bigr),
 \qquad
 F_g^-=\mathbb P\bigl(\ker(P_g+I)\cap H_m\bigr).
\]
Here $\mathbb P(\{0\})=\emptyset$. The set $F_g^+$ is
nonempty exactly when $c(g)\geq2$, and then
\[
 \dim F_g^+=c(g)-2.
\]
The set $F_g^-$ is nonempty exactly when $e(g)\geq1$, and then
\[
 \dim F_g^-=e(g)-1.
\]

\item For the action of $A_M$ on its own elements by
multiplication, put $m=m_M\geq2$. If $g\in A_M$ has order
$\ell$, then
\[
 c(g)=\frac m\ell,
 \qquad
 e(g)=
 \begin{cases}
  m/\ell,&\ell\text{ is even},\\
  0,&\ell\text{ is odd}.
 \end{cases}
\]
The positive fixed locus is empty when $m/\ell=1$ and
otherwise has dimension $m/\ell-2$. The negative fixed locus
exists exactly when $\ell$ is even, with dimension $m/\ell-1$.

If $m\geq3$, the action on $\mathbb P(H_m)$ is faithful,
and a dense open subset consists of classes with trivial
stabilizer. If $m=2$, the projective space has one point and
the action is trivial.
\end{enumerate}
\end{proposition}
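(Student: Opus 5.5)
The plan is to reduce everything to the eigenspace structure of the permutation matrix $P_g$ restricted to $H_m$.

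\emph{Part (i).} A class $[a]$ with $a\in H_m\setminus\{0\}$ is fixed by $g$ exactly when $P_ga=\lambda a$ for some real $\lambda$. Since $P_g$ is orthogonal of finite order, its only real eigenvalues are $\pm1$. So the fixed locus is the projectivization of $\ker(P_g-I)\cap H_m$ together with that of $\ker(P_g+I)\cap H_m$. These two sets are disjoint because the eigenspaces meet only in $0$.

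For the $+1$ eigenspace, I will compute $\ker(P_g-I)$: it consists of the vectors constant on each cycle, so it has dimension $c(g)$. It contains $\one$, and $\one\notin H_m$, so intersecting with the hyperplane $H_m$ cuts the dimension to exactly $c(g)-1$. The projectivization is then nonempty iff $c(g)\geq2$, with dimension $c(g)-2$.

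For the $-1$ eigenspace, $a$ must alternate in sign along each cycle. This is possible with nonzero entries only on even cycles, and each even cycle contributes one dimension. Such vectors automatically sum to zero, so they lie in $H_m$. The intersection therefore has dimension $e(g)$, giving projective dimension $e(g)-1$ when $e(g)\geq1$.

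\emph{Part (ii).} Left multiplication by $g$ on the group $A_M$ has orbits equal to the right cosets $x\langle g\rangle$. Each orbit is a single cycle of length $\ell$, so $c(g)=m/\ell$. All cycles have the same length $\ell$, so $e(g)=m/\ell$ if $\ell$ is even and $e(g)=0$ otherwise. Substituting into (i) gives the stated dimensions; the positive locus is empty exactly when $m/\ell=1$.

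\emph{Faithfulness and generic trivial stabilizer.} This is the step I expect to need the most care. For $g\neq1$ (so $\ell\geq2$), I will show both fixed loci are proper subsets of $\mathbb P(H_m)$, which has dimension $m-2$ when $m\geq3$.
\begin{itemize}
\item The positive locus has dimension $m/\ell-2<m-2$.
\item The negative locus has dimension $m/\ell-1$. This is at most $m/2-1$, and $m/2-1<m-2$ precisely when $m>2$.
\end{itemize}
So each fixed locus of each nontrivial $g$ is a proper projective linear subspace, and hence closed and nowhere dense. The group is finite, so the union of these loci is a finite union of proper closed subspaces. Its complement is therefore open and dense, and every class there has trivial stabilizer. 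In particular no nontrivial $g$ fixes all of $\mathbb P(H_m)$, which gives faithfulness.

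When $m=2$, $H_2$ is a line, so $\mathbb P(H_2)$ is a single point and every element acts trivially on it.
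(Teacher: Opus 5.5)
Your proof is correct. Parts (i) and (ii) match the paper's argument: eigenvalues $\pm1$ by orthogonality, cycle-constant vectors cut down by one hyperplane, alternating vectors supported on even cycles, and all orbits of the regular action having length $\ell$.

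The difference is in the faithfulness and generic-stabilizer claim, where you reverse the paper's logic.

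\emph{The paper's route.} It proves faithfulness first, structurally. An element acting trivially on $\mathbb P(H_m)$ preserves every line of $H_m$, so it acts there as $\pm1$. The case $+1$ forces the identity permutation, since $P_g$ also fixes $\one$. The case $-1$ would force $P_g=-I+\frac2m\one\one^{\mathsf T}$, whose diagonal entries are negative for $m>2$. Properness of each fixed locus is then read off from faithfulness.

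\emph{Your route.} You instead feed the explicit dimensions from (ii) back in. You get $\dim F_g^+\le m/2-2$ and $\dim F_g^-\le m/2-1<m-2$ when $m>2$, so every fixed locus of a nonidentity element is a proper projective subspace. Faithfulness then follows because the complement of the finite union of these loci is nonempty.

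\emph{Comparison.} Your argument is shorter once (ii) is in hand, and it shows exactly where $m>2$ enters: only in the bound on the negative locus. The paper's argument does not use the cycle structure at all. It also explains the degeneracy at $m=2$ directly: there the transposition acts as $-I$ on $H_2$, because $-I+\one\one^{\mathsf T}$ is then a genuine permutation matrix. Both arguments extend to an arbitrary faithful permutation action. For your counting, use $c(g)\le m-1$ and $e(g)\le m/2$ in place of the regular-action formulas.
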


\begin{proof}
A class $[a]$ is fixed by $g$ if and only if
$P_ga=\lambda a$ for some $\lambda\in\R^\times$.
Permutation matrices preserve the Euclidean norm, so
$\lambda\in\{1,-1\}$. The two eigenspaces intersect only at
zero, which gives the disjoint union in \textup{(i)}.

A $1$-eigenvector is constant on each cycle, giving $c(g)$
independent values. The condition that its coordinates sum
to zero imposes one independent linear equation, with
coefficients equal to the cycle lengths. Therefore
\[
 \dim\bigl(\ker(P_g-I)\cap H_m\bigr)=c(g)-1.
\]
A $-1$-eigenvector alternates on each cycle. Such a vector can
be nonzero only on even cycles, and each even cycle contributes
one independent alternating vector. Every such vector already
has coordinate sum zero. Hence
\[
 \dim\bigl(\ker(P_g+I)\cap H_m\bigr)=e(g).
\]
Projectivization subtracts one from each positive vector-space
dimension, proving the stated dimensions and empty cases.

For \textup{(ii)}, every orbit of multiplication by an element
of order $\ell$ has length $\ell$. This gives the cycle counts.

Assume $m\geq3$. If an element acts trivially on
$\mathbb P(H_m)$, its restriction to $H_m$ preserves every
line and is therefore scalar. Since the permutation is
orthogonal, this scalar is $1$ or $-1$.
In the first case the permutation is the identity on both
$H_m$ and the constant vectors, so it is the identity on
$\R^m$. The multiplication action on $A_M$ is faithful,
so the element is the identity.
In the second case its matrix would be
\[
 -I+\frac2m\one\one^{\mathsf T},
\]
whose diagonal entries are negative for $m>2$, contradicting
the definition of a permutation matrix.
Thus the projective action is faithful.

Every nonidentity element consequently has a fixed locus
that is a union of proper projective linear subspaces.
The complement of the finite union of these loci is dense
and open, and every class in that complement has trivial
stabilizer. When $m=2$, the nonidentity multiplication
permutation acts as $-I$ on the one-dimensional space $H_2$,
so its projective action is trivial.
\end{proof}

\begin{remark}[Stabilizers in the Gram quotient]
\label{rem:gram-stabilizers}
For a normalized Gram matrix, the stabilizer is
\[
 \operatorname{Stab}_{A_M}(G)
 =\{u\in A_M:P_uGP_u^{\mathsf T}=G\}.
\]
For a rank-one matrix
$G=vv^{\mathsf T}/\norm{v}_2^2$, this condition is equivalent
to $P_uv=\pm v$, so \cref{prop:stabilizers} gives the same
stabilizer as for the scalar projective class $[v]$.
In higher rank, the condition concerns invariance of a
quadratic form. For example, when $m\geq3$,
\[
 G_{\mathrm{iso}}
 =\frac1{m-1}
   \left(I-\frac1m\one\one^{\mathsf T}\right)
\]
is a trace-one Gram matrix of rank $m-1$ fixed by every
permutation. It is the orthogonal projection onto $H_m$
divided by its trace.
\end{remark}
\begin{example}[Arithmetic beyond the residue cardinality]\label{ex:arithmetic}
At $q=2$ and $M=3$, the quotients \(A_3\) for \(K=\mathbb Q_2\) and \(K=\mathbb F_2((t))\) each have four elements. But their acting
groups can differ:
\[
 (\Z/8\Z)^\times\simeq C_2\times C_2,
 \qquad
 (\mathbb F_2[t]/t^3)^\times\simeq C_4.
\]
In the second group, $(1+t)^2=1+t^2$ and $(1+t)^4=1$.
The mean-zero real regular representation of $C_2\times C_2$ is a sum
of three one-dimensional characters. That of $C_4$ is a one-dimensional
sign representation plus a two-dimensional rotation representation.
The difference is visible in the projective stabilizer strata.
Every nonidentity element of $C_2\times C_2$ fixes a projective point
and a projective line in $\mathbb P(V_3)$, whereas a generator of
$C_4$ fixes only one projective point. These descriptions follow from
\cref{prop:stabilizers}.
\end{example}

\begin{remark}[Dependence on a uniformizer]\label{rem:uniformizer}
The angular realization is attached to $(K,t)$, not just to $K$.
If $t'=ut$ with $u\in\Ocal^\times$, then
$\ac_{t'}(z)=u^{-v(z)}\ac_t(z)$. This is a shell-dependent change of
direction. This change of direction generally cannot be represented by a single element of $A_M$ acting uniformly on all valuation shells. The quotient groups and finite-dimensional spaces of functions on \(A_M\) are intrinsic to the
powers of the maximal ideal, but their homogeneous lifts require this
choice. For a fixed ball kernel, taking the additional supremum over all
unit-coordinate changes makes its dilation family independent of the chosen
uniformizer, since the unit change at scale $k$ can be absorbed in that
supremum. This observation does not identify the pure angular lifts arising
from different choices of $t$.
\end{remark}

\section{Residue kernels and radial--angular decomposition}
\label{sec:residue}

For $N\geq1$, write
\[
 R_N=\Ocal/t^N\Ocal,
 \qquad
 \rho_N:\Ocal\to R_N,
 \qquad
 \rho_N(z)=z+t^N\Ocal.
\]
The fibers of $\rho_N$ partition $\Ocal$ into $q^N$ balls,
each of measure $q^{-N}$.

\begin{definition}[Residue tables and residue depth]
\label{def:residue-kernel}
Let $E$ be a finite-dimensional real normed space.
A \emph{residue table at level $N$} is a function
$v:R_N\to E$. The space of cancellative residue tables is
\[
 W_N(E)=
 \left\{v:R_N\to E:\sum_{a\in R_N}v(a)=0\right\}.
\]
A table $v\in W_N(E)$ determines the diamond kernel
\[
 \theta_{v,N}=v\circ\rho_N
 \quad\text{on }\Ocal.
\]
A diamond kernel has \emph{residue depth at most $N$} if it
equals $\theta_{v,N}$ almost everywhere for some
$v\in W_N(E)$. A kernel admitting such a representation at
some finite level is called a \emph{residue kernel}.
Its residue depth is the least admissible level. The zero
kernel is assigned depth zero.
\end{definition}

Each entry $v(a)$ specifies the kernel value on the entire
ball $\rho_N^{-1}(\{a\})$. In particular, $v(0)$ is its
value almost everywhere on $t^N\Ocal$. We use
\[
 \norm{v}_\infty=\max_{a\in R_N}\norm{v(a)}_E
 =\norm{\theta_{v,N}}_\infty.
\]
A representation at level $N$ may describe a kernel of
smaller residue depth.

For $v\in W_N(E)$, abbreviate
$T_{\theta_{v,N},k}$ by $T_{v,k}$. Given $B=B_k(x)$ and
$a\in R_N$, choose any representative $\widetilde a\in\Ocal$
and put
\[
 B_a=x+t^k\widetilde a+t^{k+N}\Ocal.
\]
This ball is independent of the chosen representative.
Splitting $B$ into these $q^N$ subballs gives
\begin{equation}\label{eq:residue-average}
 T_{v,k}f(x)
 =q^{-N}\sum_{a\in R_N}v(a)f_{B_a}.
\end{equation}
Here $k$ specifies the scale of the averaging ball, whereas
$N$ specifies its residue resolution. The underlying
partition is independent of the center chosen within $B$,
but its labels depend on that center. A fixed table can
therefore produce different measurements at different
centers in the same ball.

\begin{remark}
At a fixed center $x$ and scale $k$, every vector
$(f_{B_a})_{a\in R_N}$ is realized by an input constant on
each residue subball. Hence \cref{thm:finite-moduli}
classifies equality and mutual domination of the one-scale
norm measurements in \eqref{eq:residue-average}, with the
residue-class labels fixed. Passing to the supremum over
scales permits further identifications, as
\cref{ex:dilation-redundancy} shows.
\end{remark}

Let $U_N=R_N^\times$ act by $(u\cdot v)(a)=v(u^{-1}a)$. Reduction identifies $U_N$ with $A_N$. Here $U_N$ acts on the
entire residue ring $R_N$, including its nonunit elements.
The compact quotient
\begin{equation}\label{eq:residue-moduli}
 \mathfrak M_N^\res=\mathbb P(W_N(\R))/U_N
\end{equation}
classifies nonzero scalar tables up to scaling and unit-coordinate changes.
Equivalently, it classifies their one-scale scalar functionals under the
same equivalence, by the kernel-inclusion argument in
\cref{thm:finite-moduli}. Its dimension is $q^N-2$.
The stabilizer loci are given explicitly by \cref{prop:stabilizers}, using
the cycle decomposition of multiplication by each unit on $R_N$.
The action has a dense open locus of trivial projective stabilizer: a
nonidentity unit permutes $R_N$ nontrivially and fixes zero, and hence
cannot act as a scalar on its mean-zero subspace. For $q^N=2$ the unit
group is trivial.
The pullbacks $W_N\hookrightarrow W_{N+1}$ induce embeddings of these
quotients: reduction $U_{N+1}\to U_N$ is surjective, and any equivalence
of pulled-back tables descends. Their union parametrizes locally constant
cancellative ball kernels up to the stated table equivalence.
\Cref{ex:dilation-redundancy} explains why this is not a classification of
the full maximal operators.

Write
\[
 W_N^{\mathrm{rad}}
 =\{v\in W_N(\R):u\cdot v=v\text{ for every }u\in U_N\}.
\]
These are the radial tables. A residue table $v:R_N\to E$ is \emph{radial} if it is
constant on each orbit of $U_N$. Equivalently, $v$ is constant on each nonzero valuation shell $t^jR_N^\times$, $0\leq j<N$,
with a separate value at the zero residue class.

\begin{proposition}[Radial--angular decomposition]\label{prop:decomposition}
For scalar residue kernels, there is an orthogonal $U_N$-equivariant
decomposition
\begin{equation}\label{eq:decomposition}
 W_N(\R)=W_N^{\mathrm{rad}}\oplus
          \bigoplus_{j=0}^{N-1}V_{N-j},
 \qquad \dim W_N^{\mathrm{rad}}=N.
\end{equation}
The angular summand $V_{N-j}$ is placed on the shell
$t^j\Ocal^\times$ and $U_N$ acts on it through $A_{N-j}$.
The radial summand is fixed pointwise by $U_N$.
\end{proposition}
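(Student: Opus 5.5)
The plan is to decompose $R_N$ into its $U_N$-orbits and split each table accordingly. The orbits of $U_N$ acting on $R_N$ are the zero class $\{0\}$ and the shells $S_j=t^jR_N^\times$ for $0\leq j<N$. I would check this directly. Every nonzero $a\in R_N$ has a well-defined valuation $j<N$ and can be written $a=t^j\epsilon$ with $\epsilon$ a unit. Two elements $t^j\epsilon$ and $t^j\epsilon'$ differ by multiplication by the unit $\epsilon'\epsilon^{-1}$. Elements of different valuation cannot be related, since multiplication by units preserves valuation.

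Next I identify each shell with a unit quotient. The map $\Ocal^\times\to S_j$, $u\mapsto t^ju+t^N\Ocal$, identifies $S_j$ with $\Ocal^\times/(1+t^{N-j}\Ocal)=A_{N-j}$. The key point is that $t^ju\equiv t^ju'$ modulo $t^N$ exactly when $u\equiv u'$ modulo $t^{N-j}$. Under this identification, multiplication by $U_N$ on $S_j$ becomes the action through the reduction $U_N=A_N\to A_{N-j}$. This reduction is surjective, and I will state that explicitly.

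With the orbits in hand, I define the summands. Let $\one_{\{0\}}$ and $\one_{S_j}$ be the orbit indicators. $W_N^{\mathrm{rad}}$ consists of mean-zero combinations of these $N+1$ indicators. One linear condition (the sum is zero) cuts this down, so $\dim W_N^{\mathrm{rad}}=N$. For each $j$, the summand $V_{N-j}$ consists of functions supported on $S_j$ that sum to zero over $S_j$. Via the identification above, this is exactly $V_{N-j}$ as defined on $A_{N-j}$.

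For the direct sum, I take a table $v\in W_N(\R)$ and split it as follows:
\[
 v=\Bigl(v(0)\one_{\{0\}}+\sum_j \bar v_j\one_{S_j}\Bigr)+\sum_j\bigl(v-\bar v_j\bigr)\one_{S_j},
\]
where $\bar v_j$ denotes the average of $v$ over $S_j$. The first bracket has total sum equal to $\sum_a v(a)=0$, so it lies in $W_N^{\mathrm{rad}}$. Each remaining term is supported on $S_j$ with zero sum, so it lies in $V_{N-j}$.

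Orthogonality in the $\ell^2(R_N)$ inner product then comes in two parts. Summands supported on different shells have disjoint supports, so they are orthogonal. A radial vector restricted to $S_j$ is constant there, and a constant is orthogonal to a function on $S_j$ with zero sum. Orthogonality gives directness, so the spaces indeed form a direct sum. As a dimension check, $N+\sum_{j}(m_{N-j}-1)=N+(q^N-1)-N=q^N-1$, which is correct because $\sum_j m_{N-j}=q^N-1$.

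Equivariance is the last step. Each summand is $U_N$-stable, because units permute each shell, preserve zero sums, and fix $0$. The radial summand is fixed pointwise by the definition of radial tables. It also follows that the angular actions factor through $A_{N-j}$. The step needing the most care is the identification $S_j\cong A_{N-j}$. There I would verify that it is well defined and equivariant, since the kernel of the reduction $U_N\to A_{N-j}$ acts trivially on $S_j$. The remaining steps are bookkeeping.
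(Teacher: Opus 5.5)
Your proposal is correct and follows the same route as the paper: you split $R_N$ into $\{0\}$ and the shells $t^jR_N^\times\cong A_{N-j}$, and subtracting shell averages produces the angular components in $V_{N-j}$. The radial part is then cut out by the single cancellation constraint, and orthogonality comes from disjoint supports together with the mean-zero remainders; your explicit checks that the shells are exactly the $U_N$-orbits and that $\ker(U_N\to A_{N-j})$ acts trivially on $t^jR_N^\times$ supply details the paper leaves implicit.
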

\begin{proof}
The residue partition decomposes as
\[
 R_N=\{0\}\sqcup\bigsqcup_{j=0}^{N-1}t^jA_{N-j}.
\]
For a table $v$, let $b_j$ be its average on $t^jA_{N-j}$ and let
$b_\infty=v(0)$. On each shell subtract $b_j$. The remainder is an
element of $V_{N-j}$. The remaining radial table is determined by the
$N+1$ numbers $b_0,\ldots,b_{N-1},b_\infty$, subject to
\[
 \sum_{j=0}^{N-1}(1-q^{-1})q^{-j}b_j+q^{-N}b_\infty=0.
\]
This gives its dimension and uniqueness. The summands on different shells
have disjoint support, and each angular remainder has mean zero on its
shell. They are therefore orthogonal for normalized counting measure on
$R_N$. Multiplication by a unit preserves shells and their averages,
giving equivariance. As a dimension check,
$N+\sum_{j=0}^{N-1}((q-1)q^{N-j-1}-1)=q^N-1$.
\end{proof}

This decomposition identifies the additional information carried by residue
tables. Angular resolution records digits after the first nonzero digit.
Residue resolution records an initial segment of the entire address,
including its valuation.

\begin{proposition}[Residue approximation of angular kernels]
\label{prop:angular-truncation}
Let $M\geq1$ be an integer and let
$\theta\in L^\infty(\Ocal^\times;E)$ have mean zero with respect to
$\nu$ and factor through $A_M$. Recall 
$\widetilde\theta(z)=\theta(\ac_t(z))$ on
$\Ocal\setminus\{0\}$. For an integer $L\geq0$, define
\[
 \theta^{[L]}(z)=\widetilde\theta(z)
                         \one_{\Ocal\setminus t^{L+1}\Ocal}(z).
\]
Then $\theta^{[L]}$ is cancellative and has residue depth at most $L+M$.
For $1\leq p<\infty$,
\[
 \norm{\widetilde\theta-\theta^{[L]}}_{L^p(\Ocal)}
 =q^{-(L+1)/p}\norm{\theta}_{L^p(\Ocal^\times,\nu)}.
\]
At each fixed center and scale, the corresponding averages converge for
every locally integrable input. A nonzero $\widetilde\theta$ never factors
through a finite residue quotient.
\end{proposition}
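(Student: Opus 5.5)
The plan is to verify the four assertions in order, each by a direct computation in polar coordinates over $K$, using the shell decomposition $\Ocal\setminus\{0\}=\bigsqcup_{j\geq0}t^j\Ocal^\times$.

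\emph{Cancellation.} On the shell $t^j\Ocal^\times$ we have $\ac_t(z)=t^{-j}z$. This substitution carries $\mu|_{t^j\Ocal^\times}$ to $q^{-j}(1-q^{-1})\nu$. Hence
\[
 \int_{t^j\Ocal^\times}\widetilde\theta\,d\mu=q^{-j}(1-q^{-1})\int_\Omega\theta\,d\nu=0
\]
for every $j$. Since $\theta^{[L]}$ is the restriction of $\widetilde\theta$ to the shells $0\leq j\leq L$, summing gives $\int_\Ocal\theta^{[L]}\,d\nu_0=0$. Boundedness is inherited from $\theta$.

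\emph{Residue depth.} We show that $\theta^{[L]}$ is constant on each coset $z+t^{L+M}\Ocal$.
\begin{enumerate}
\item If $z\in t^{L+1}\Ocal$, the whole coset lies in $t^{L+1}\Ocal$, where the kernel is zero.
\item Otherwise $v(z)=j\leq L$. For $w\in t^{L+M}\Ocal$ we have $v(z+w)=j$. Moreover $\ac_t(z+w)=\ac_t(z)+t^{-j}w$ with $t^{-j}w\in t^{L+M-j}\Ocal\subseteq t^M\Ocal$.
\item Because $\ac_t(z)$ is a unit, this means $\ac_t(z+w)\in\ac_t(z)(1+t^M\Ocal)$. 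Since $\theta$ factors through $A_M$, the value is unchanged.
\end{enumerate}
So $\theta^{[L]}=v\circ\rho_{L+M}$ for a table $v$, and $v\in W_{L+M}(E)$ by the cancellation step.

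\emph{$L^p$ identity.} The difference $\widetilde\theta-\theta^{[L]}$ equals $\widetilde\theta\,\one_{t^{L+1}\Ocal}$. The same shell computation gives
\[
 \int_{t^{L+1}\Ocal}\|\widetilde\theta\|^p\,d\mu=\sum_{j\geq L+1}q^{-j}(1-q^{-1})\|\theta\|_{L^p(\nu)}^p=q^{-(L+1)}\|\theta\|_{L^p(\nu)}^p.
\]
Taking $p$-th roots gives the stated identity.

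\emph{Convergence of averages.} At fixed $x,k$ we have
\[
 \|T_{\widetilde\theta,k}f(x)-T_{\theta^{[L]},k}f(x)\|\leq\|\theta\|_\infty\int_{t^{L+1}\Ocal}|f(x+t^kz)|\,d\mu(z).
\]
The right-hand side tends to $0$ by dominated convergence, since $f(x+t^k\cdot)\in L^1(\Ocal)$ and $t^{L+1}\Ocal$ shrinks to $\{0\}$.

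\emph{Non-factorization.} This is the only step needing care. Suppose $\widetilde\theta=v\circ\rho_N$ almost everywhere, with $\widetilde\theta\neq0$.
\begin{enumerate}
\item The ball $t^N\Ocal$ is a single fiber of $\rho_N$, so $\widetilde\theta$ equals the constant $v(0)$ almost everywhere on $t^N\Ocal$.
\item On the shell $t^N\Ocal^\times\subseteq t^N\Ocal$, the function $\widetilde\theta$ is a rescaled copy of $\theta$. Hence $\theta$ equals $v(0)$ $\nu$-a.e. on $\Ocal^\times$.
\item Since $\theta$ has mean zero, $v(0)=0$, so $\theta=0$ and therefore $\widetilde\theta=0$, a contradiction.
\end{enumerate}

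The main obstacle is bookkeeping rather than ideas. The points to check are the measure-preserving shell substitution, which carries the factor $(1-q^{-1})q^{-j}$ against the normalization of $\nu$, and the inclusion $t^{L+M-j}\Ocal\subseteq t^M\Ocal$ for $j\leq L$.
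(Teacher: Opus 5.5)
Your proof is correct and follows essentially the same route as the paper's: shellwise cancellation, constancy on cosets of $t^{L+M}\Ocal$ from the angular class on the shells $j\leq L$, summation of the tail shell measures, a bound on the omitted part of each average, and the observation that a factorization forces $\theta$ to be constant via the single fiber $t^N\Ocal$. The only cosmetic difference is that you get convergence of the averages by dominated convergence in the $z$ variable, whereas the paper uses absolute continuity of $\int_{x+t^{k+L+1}\Ocal}|f|\,d\mu$; after the change of variables these are the same estimate.
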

\begin{proof}
Each shell has angular mean zero, proving cancellation of the truncation.
On shell $j\leq L$, knowledge modulo $t^{j+M}$ determines the angular
class; the cutoff is constant on classes modulo $t^{L+1}$. Since $M\geq1$,
depth $L+M$ suffices. Summing shell measures on the omitted tail gives the
norm identity. For fixed $x,k$, the omitted integral is bounded by
\[
 q^k\norm{\theta}_\infty
       \int_{x+t^{k+L+1}\Ocal}|f(y)|\,d\mu(y),
\]
which tends to zero by absolute continuity of the integral.
If the lift were constant on residue classes of some depth $N$, it would
be constant on $t^N\Ocal$. Every angular class occurs there with positive
measure, forcing $\theta$ to be constant. Cancellation would then force
$\theta=0$.
\end{proof}

For bounded inputs the truncation gives a uniform estimate for the
full maximal operators:
\[
 \norm{\D_{\widetilde\theta}f-\D_{\theta^{[L]}}f}_\infty
 \leq q^{-(L+1)}\norm{\theta}_\infty\norm{f}_\infty.
\]
Indeed, the omitted part of the normalized ball has measure
$q^{-(L+1)}$, so the same bound holds for every center and scale
before taking the supremum. For general locally integrable inputs,
\cref{prop:angular-truncation} gives convergence at each fixed center
and scale.

\section{Radial residue operators and polynomial division}\label{sec:radial}
This section classifies radial residue operators on $L^\infty(K)$,
with the full supremum over all scales. The classification follows by
expressing radial averages as finite filters of ball-average differences.
\Cref{ex:radial-domain} explains the role of the bounded-input hypothesis.

For $k\in\Z$ define
\[
 E_kf(x)=\frac1{\mu(B_k(x))}\int_{B_k(x)}f\,d\mu,
 \qquad \Delta_kf(x)=E_{k+1}f(x)-E_kf(x).
\]
The value $E_kf(x)$ is the average on the ball of scale $k$
containing $x$. The difference $\Delta_kf(x)$ records the change
in that average when passing to the child containing $x$.
A radial kernel combines finitely many of these changes.
Let a radial kernel represented at residue level \(N\) have values $b_j$ on
$t^j\Ocal^\times$, $0\leq j<N$, and $b_N$ on $t^N\Ocal$.
Set
\[
 \alpha_0=b_0,\qquad
 \alpha_j=q^{-j}(b_j-b_{j-1})\quad(1\leq j\leq N),\qquad
 c_j=\sum_{i=j+1}^N\alpha_i\quad(0\leq j<N).
\]
Splitting each shell into the difference of two balls gives
\begin{equation}\label{eq:radial-filter}
 T_{\theta,k}f(x)=\sum_{j=0}^N\alpha_jE_{k+j}f(x)
                =\sum_{j=0}^{N-1}c_j\Delta_{k+j}f(x).
\end{equation}
Here cancellation is equivalent to $\sum_{j=0}^N\alpha_j=0$.
Conversely, every nonzero finite real coefficient list $c$ is obtained:
put $\alpha_0=-c_0$, $\alpha_j=c_{j-1}-c_j$ for $0<j<N$,
$\alpha_N=c_{N-1}$, and $b_j=\sum_{i=0}^jq^i\alpha_i$.
We write $\D_c$ for the radial operator in \eqref{eq:radial-filter} and
associate to it the polynomial $p_c(z)=\sum_jc_jz^j$.
Zero padding and index shifts are harmless, so coefficient lists may be
viewed as finitely supported sequences on $\Z$ and symbols as Laurent
polynomials.
For sequences we use
\[
 (a*b)_j=\sum_{n\in\Z}a_n b_{j-n}.
\]
If $a\in\ell^1(\Z;\R)$, its symbol is the absolutely convergent series
\[
 \widehat a(z)=\sum_{n\in\Z}a_nz^n,\qquad |z|=1.
\]
Thus $c=a*b$ is equivalent to $p_c=\widehat a\,p_b$ on the unit circle.

We write $c_0(\Z)$ for the real sequences tending to zero as
$|j|\to\infty$. The duality between $\ell^1(\Z)$ and $c_0(\Z)$
uses the pairing $\sum_j a_jd_j$; the weak-star topology below is
$\sigma(\ell^1,c_0)$.
\begin{lemma}[Finite-filter domination]\label{lem:filter}
For a nonzero finitely supported real sequence $b$ and a finitely supported
real sequence $c$, the least constant in
\begin{equation}\label{eq:filter-comparison}
 \sup_k\left|\sum_jc_jd_{k+j}\right|
 \leq C\sup_k\left|\sum_jb_jd_{k+j}\right|
 \qquad(d\in\ell^\infty(\Z))
\end{equation}
is $\norm a_{\ell^1}$ if an $a\in\ell^1(\Z)$ satisfies $c=a*b$,
and is infinite if no such $a$ exists. When it exists, $a$ is unique.
It suffices to test \eqref{eq:filter-comparison} on finitely supported $d$.
\end{lemma}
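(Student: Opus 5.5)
The plan is to run the support-function argument of \cref{thm:convex-invariant} on sequences, with $c_0(\Z)$ as the input space and $\ell^1(\Z)$ as its dual. For finitely supported $b$ and $d\in\ell^\infty(\Z)$, write $(S_bd)_k=\sum_jb_jd_{k+j}$. With the convolution convention $(a*b)_j=\sum_na_nb_{j-n}$ we get the identity
\[
 \sum_j(a*b)_jd_{k+j}=\sum_na_n(S_bd)_{k+n}\qquad(a\in\ell^1(\Z)).
\]

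\emph{Sufficiency.} If $c=a*b$ with $a\in\ell^1$, the identity gives $|(S_cd)_k|\leq\norm a_{\ell^1}\sup_k|(S_bd)_k|$ for every $d\in\ell^\infty$. So $\norm a_{\ell^1}$ is an admissible constant in \eqref{eq:filter-comparison}.

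\emph{Uniqueness of $a$.} Since $b\ne0$, the Laurent polynomial $p_b$ has only finitely many zeros on the unit circle. The relation $c=a*b$ means $\widehat a\,p_b=p_c$, which determines the continuous function $\widehat a$ off a finite set, hence everywhere. Fourier uniqueness in $\ell^1$ then gives uniqueness of $a$. It follows that the least constant is at most $\norm a_{\ell^1}$, and it remains to prove the reverse inequality, and infiniteness when no such $a$ exists, using only finitely supported $d$.

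\emph{Reduction to a separation statement.} Assume \eqref{eq:filter-comparison} holds with constant $C$ for all finitely supported $d$. Both sides are suprema of functionals given by $\ell^1$ vectors, so they are norm-continuous on $c_0$, and the inequality extends from the dense subspace of finitely supported sequences to all of $c_0(\Z)$. For $d\in c_0$, the quantity $(S_bd)_k$ is the pairing of $d$ with the shift $\tau_kb$, where $(\tau_kb)_i=b_{i-k}$. Hence the right side of \eqref{eq:filter-comparison} is $C$ times the support function of
\[
 K=\overline{\operatorname{conv}}^{\,\sigma(\ell^1,c_0)}\{\pm\tau_kb:k\in\Z\}.
\]
The left side is at least $|\langle c,d\rangle|$, taking $k=0$. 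Hahn--Banach separation in $(\ell^1,\sigma(\ell^1,c_0))$, whose continuous functionals are exactly the pairings with $c_0$, then shows that the inequality forces $c\in CK$. This is the same argument as in \cref{thm:convex-invariant}.

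\emph{Identifying $K$ (the main step).} I will show that
\[
 K=S:=\{a*b:\norm a_{\ell^1}\leq1\}.
\]
First, $S\subseteq K$: each $a*b$ with $\norm a_1\le1$ is a norm limit of finite combinations $\sum_na_n\tau_nb$ with $\sum_n|a_n|\le1$, and norm limits lie in the weak-star closure. Conversely, $S$ is convex and contains every $\pm\tau_kb$, so it suffices to show that $S$ is weak-star closed. Since $c_0$ is separable, bounded sets in $\ell^1$ are weak-star metrizable, and Krein--Smulian reduces closedness to sequences. Suppose $a_n*b\to x$ weak-star with $\norm{a_n}_1\le1$. By Banach--Alaoglu, pass to a subsequence with $a_n\to a$ weak-star; then $\norm a_1\le1$ by lower semicontinuity of the norm. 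Weak-star convergence of bounded sequences in $\ell^1$ is coordinatewise convergence, and each coordinate of $a*b$ involves only finitely many coordinates of $a$ because $b$ is finitely supported. Hence $a_n*b\to a*b$ coordinatewise, and boundedness upgrades this to weak-star convergence, so $x=a*b\in S$. I expect this closedness check to be the main technical point; the rest is bookkeeping.

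\emph{Conclusion.} From $c\in CK=CS$ we get some $a\in\ell^1$ with $c=a*b$ and $\norm a_1\le C$. By uniqueness this is the $a$ of the statement, so the least constant equals $\norm a_{\ell^1}$. If no such $a$ exists, no finite $C$ can work, and this conclusion was reached using only finitely supported test sequences, which gives the final sentence of the lemma.
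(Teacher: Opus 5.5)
Your proof is correct and follows the paper's argument. Both use rearrangement for sufficiency, extension to $c_0(\Z)$ by uniform approximation, Hahn--Banach separation in the $(\ell^1,c_0)$ pair against $\{a*b:\norm a_1\le C\}$, and uniqueness from the finitely many unit-circle zeros of $p_b$. The only difference is that the paper gets weak-star compactness of this set in one line, as the image of the compact $\ell^1$ ball under convolution by $b$, which is weak-star continuous because its preadjoint preserves $c_0$. That makes your sequential closedness step unnecessary; since $S$ is bounded, Krein--\v{S}mulian was not needed there in any case.
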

\begin{proof}
If $c=a*b$, absolute convergence permits rearrangement for bounded $d$:
\[
 \sum_jc_jd_{k+j}=\sum_na_n\sum_jb_jd_{k+n+j}.
\]
This proves the inequality with constant $\norm a_1$.
Conversely, suppose the inequality holds for finitely supported $d$.
Uniform approximation extends it to $d\in c_0(\Z)$.
In the dual pair $(\ell^1,c_0)$, the set
\[
 K_C=\{a*b:\norm a_1\leq C\}
\]
is weak-star compact and convex: the $\ell^1$ ball is weak-star compact,
and convolution by finitely supported $b$ is weak-star continuous because
its preadjoint preserves $c_0$. Indeed, the preadjoint of convolution by $b$ maps $d$ to the sequence $\left(\sum_j b_jd_{n+j}\right)_{n\in\Z},$ which belongs to $c_0(\Z)$ whenever $d$ does. Its support function at $d\in c_0$ is
$C\sup_k|\sum_jb_jd_{k+j}|$. The assumed inequality at $k=0$ and separation
therefore imply $c\in K_C$.
If $a*b=0$, the absolutely convergent Fourier series of $a$ multiplied by
the nonzero Laurent polynomial $p_b$ is zero on the unit circle. The latter
polynomial has finitely many zeros there. Continuity forces the Fourier
series of $a$ to vanish everywhere. Uniqueness of Fourier coefficients
gives $a=0$. This proves uniqueness and the optimal-constant assertion.
\end{proof}
By the unit-circle zero divisor of a nonzero Laurent polynomial
we mean its zeros on $\{z\in\C:|z|=1\}$, counted with multiplicity.
Equality of these divisors means equality of the zero locations
and their multiplicities.

A finite real filter is a finitely supported sequence $c=(c_j)_{j\in\Z}$.
\begin{theorem}[Classification of radial residue operators on bounded inputs]
\label{thm:radial-classification}
Let $b,c$ be nonzero finite real filters of radial residue kernels.
The following are equivalent:
\begin{enumerate}
\item $\D_cf(x)\leq C\D_bf(x)$ for every bounded input and every $x$, for
some finite constant $C$;
\item $c=a*b$ for a sequence $a\in\ell^1(\Z)$;
\item every unit-circle zero of $p_b$ is a zero of $p_c$ with at least the
same multiplicity.
\end{enumerate}
The least comparison constant is $\norm a_1$. Mutual domination on bounded
inputs is classified by the unit-circle zero divisor of the filter
polynomial. Exact equality on bounded inputs holds precisely when
\begin{equation}\label{eq:exact-filter}
 c_j=\varepsilon b_{j-m}\quad\text{for some }m\in\Z,
 \quad\varepsilon\in\{1,-1\}.
\end{equation}
The same statements hold when comparisons are required only almost
everywhere for each bounded input.
\end{theorem}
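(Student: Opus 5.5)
The plan is to reduce the maximal-operator comparison to \cref{lem:filter}. Given a bounded input $f$ and a center $x$, set $d_n=\Delta_nf(x)$ for $n\in\Z$. This sequence is bounded by $2\norm f_\infty$, and \eqref{eq:radial-filter} gives $\D_cf(x)=\sup_k|\sum_jc_jd_{k+j}|$. Hence (ii) implies (i) with constant $\norm a_1$, directly from the first half of the lemma.

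For (i)$\Rightarrow$(ii) and optimality of the constant, I will realize every finitely supported $d$ at the origin by a bounded radial input. Take
\[
 f=\sum_n \gamma_n\one_{t^n\Ocal}
\]
with finitely many nonzero $\gamma_n$; this is bounded. Then
\[
 E_kf(0)=\sum_n\gamma_n\min(1,q^{k-n}),
 \qquad
 \Delta_kf(0)=\sum_{n\geq k+1}\gamma_n(1-q^{-1})q^{k-n}.
\]
This map is triangular. Choosing $\gamma_{k+1}=(1-q^{-1})^{-1}q\,(d_k-q^{-1}d_{k+1})$, telescoped from the top index downward, produces any finitely supported $d$, with $\gamma$ finitely supported as well. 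So (i) at the origin gives \eqref{eq:filter-comparison} on finitely supported $d$. The lemma then yields $c=a*b$ with $\norm a_1\leq C$, and hence the least constant is exactly $\norm a_1$.

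For the almost-everywhere version, I will use continuity. Any input of this form is locally constant away from a small neighbourhood, and $\D f$ is constant on $t^{L}\Ocal$ for $L$ beyond the support of $\gamma$. At every center $y$ there, $\Delta_kf(y)$ agrees with $\Delta_kf(0)$ for $k<L$. For $k\geq L$ I would rather shift the construction: replace the indicators $\one_{t^n\Ocal}$ by indicators of balls around a point, and use that $\D f(y)$ for $y$ in a positive-measure ball near $0$ sees the same sequence up to finitely many extra terms. Cleanest is to choose $d$ supported in $k\le -1$ and note that $\Delta_kf(y)=\Delta_kf(0)$ for all $k$ and all $y\in\Ocal$, since $f$ is constant on $\Ocal$ ($\gamma_n=0$ for $n\geq1$). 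By translation invariance in $k$, shifted $d$ suffices. So the inequality holds on all of $\Ocal$, which has positive measure.

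The equivalence (ii)$\Leftrightarrow$(iii) is Wiener-algebra division; I expect this to be the main analytic step. For (ii)$\Rightarrow$(iii), $p_c=\widehat a\,p_b$ on the circle. Multiplicity needs care because $\widehat a$ is only continuous. I would argue algebraically instead: factor $p_b=g\,h$, where $h=\prod(z-\zeta_i)$ collects the unit-circle zeros with multiplicity and $g$ has none. If $p_c$ had a circle zero $\zeta$ of lower multiplicity, divide out the common part and get $\tilde p_c=\widehat a\,\tilde p_b$ with $\tilde p_c(\zeta)\neq0=\tilde p_b(\zeta)$, contradicting continuity of $\widehat a$.

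For (iii)$\Rightarrow$(ii), write $p_c/p_b=r/g$ as a Laurent polynomial $r$ divided by $g$. Since $g$ has no zeros on the circle, $1/g$ expands as an absolutely convergent Laurent series in an annulus, by partial fractions or by Wiener's lemma. So $a$, the coefficient sequence of $r/g$, lies in $\ell^1$, and $a*b=c$ by uniqueness of Fourier coefficients.

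Mutual domination is then equivalent to equality of circle divisors. For exact equality, $\D_c=\D_b$ forces both optimal constants to be at most $1$. This gives $a*a'=\delta$ with $\norm a_1,\norm{a'}_1\leq1$. Since $1=\norm{\delta}_1\le\norm a_1\norm{a'}_1$, both norms equal $1$. Then $\widehat a\widehat{a'}=1$ with $|\widehat a|\leq1$ and $|\widehat{a'}|\leq1$, so $|\widehat a|\equiv1$ on the circle while $\sum|a_n|=1$. Equality in $|\sum a_nz^n|\le\sum|a_n|$ for all $z$ forces $a$ to be a single signed point mass, which is \eqref{eq:exact-filter}. The converse is immediate, since a shift is absorbed by the supremum over $k$.
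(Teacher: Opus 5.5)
Your proposal is correct and follows the paper's proof in all essentials. You realize finitely supported difference sequences at the origin by bounded radial inputs (your ball-indicator coefficients $\gamma_n$ are a change of basis from the paper's shell values), apply \cref{lem:filter}, and divide in the Wiener algebra. Your almost-everywhere argument (shifting $d$ to negative scales so that both maximal functions are constant on $\Ocal$) and your exact-equality step via $|\widehat a|\equiv1$ are valid alternatives to the paper's continuity argument and its observation that $|a|*|e|=\delta_0$.

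There are two small slips, neither affecting the argument:
\begin{enumerate}
\item The correct formula is $\Delta_kf(0)=\sum_{n\geq k+1}\gamma_n(q-1)q^{k-n}$, so $\gamma_{k+1}=(1-q^{-1})^{-1}(d_k-q^{-1}d_{k+1})$ without your extra factor $q$. This is harmless, since only the triangularity is used.
\item In (iii)$\Rightarrow$(ii) you should note that $a$ is real. For example, replace $a$ by its real part, which still satisfies $a*b=c$ because $b$ and $c$ are real.
\end{enumerate}
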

\begin{proof}
For bounded $f$, the sequence $d_k=\Delta_kf(x)$ is bounded.
Thus \cref{lem:filter} proves \textup{(ii)}$\Rightarrow$\textup{(i)}.
For the converse, every finitely supported real sequence $d_k$ can be
realized as $\Delta_kf(0)$ for a bounded, locally constant, compactly
supported radial function. To see this, choose integers $L<U$ containing
its support and prescribe $m_L=0$,
$m_{k+1}=m_k+d_k$ for $L\leq k\leq U$.
Put $f=0$ outside $t^L\Ocal$, put $f=m_{U+1}$ on $t^{U+1}\Ocal$, and
give it the shell value
\[
 \frac{q m_k-m_{k+1}}{q-1}
 \quad\text{on }t^k\Ocal^\times,\quad L\leq k\leq U.
\]
If $E_{k+1}f(0)=m_{k+1}$, averaging over the shell and the inner ball gives
\[
 E_kf(0)
 =(1-q^{-1})\frac{qm_k-m_{k+1}}{q-1}
   +q^{-1}m_{k+1}
 =m_k.
\]
Starting from the innermost ball proves
$E_kf(0)=m_k$ for $L\leq k\leq U+1$.
Since $m_L=0$, the integral of $f$ on its support is zero, so
$E_kf(0)=0$ for $k<L$. For $k\geq U+1$ the average is $m_{U+1}$.
Hence $\Delta_kf(0)=d_k$ at every scale. Applying
\textup{(i)} at zero and then \cref{lem:filter} proves
\textup{(ii)}, with the same least constant.

If $c=a*b$, then on the unit circle $p_c=\widehat a\,p_b$, where
$\widehat a$ is continuous. A zero of $p_b$ of higher order than the
corresponding zero of $p_c$ would make $p_c/p_b$ unbounded there.
This proves \textup{(iii)}. Conversely, under \textup{(iii)} all possible
poles of the rational function $p_c/p_b$ on the unit circle are removable.
It is holomorphic on an annulus containing that circle. Cauchy estimates
on a smaller closed annulus imply exponential decay of its Laurent
coefficients in both directions. Since $p_b$ and $p_c$ have real
coefficients, their quotient $R=p_c/p_b$ satisfies
$R(\bar z)=\overline{R(z)}$. Uniqueness of the Laurent expansion
therefore makes its coefficients real. They form an $\ell^1$ sequence
$a$, and multiplication of the absolutely convergent series gives $c=a*b$.

Mutual domination now means equality of the zero divisors. If the maximal
operators are exactly equal, there exist $a,e\in\ell^1$ with
$c=a*b$, $b=e*c$, and $\norm a_1,\norm e_1\leq1$.
Uniqueness in \cref{lem:filter} gives $a*e=\delta_0$. The sequence \(\delta_0\) is equal to one at index zero and zero elsewhere.
The inequality $|a*e|\leq|a|*|e|$ has equality in total mass, since
$1\leq\norm a_1\norm e_1\leq1$. Therefore $|a|*|e|=\delta_0$.
Every sum of a point in the support of $a$ and a point in the support of
$e$ must be zero. Both supports are singletons, and their coefficients
have absolute value one. This gives \eqref{eq:exact-filter}, whose
sufficiency is immediate from the supremum over $k$.

For almost-everywhere comparisons, the realizing functions above are
uniformly continuous. The bound by their translation modulus, used in
\cref{thm:angular-rigidity}, makes both maximal functions continuous.
Hence their comparison extends to zero, and the same proof applies.
\end{proof}

The division step takes place in the Wiener algebra of absolutely
convergent Fourier series \cite{Wiener1932}. If $p_b$ has no zeros
on the unit circle, Wiener's theorem gives an absolutely summable
inverse. For the polynomials considered here, cancellation of all
unit-circle poles makes $p_c/p_b$ holomorphic on an annulus and yields
the stronger exponential decay used in the proof. The realization
of finitely supoorted sequences by radial inputs transfers this division
criterion, including its optimal constant, to diamond operators.

Thus the radial comparability moduli can be identified with finite effective
divisors on the unit circle invariant under complex conjugation. Every such
divisor occurs, by taking the real polynomial with those roots. Positive
scaling and a shift of the filter do not affect this invariant, whereas
zeros off the unit circle affect optimal constants and exact equality.
For example, if $0<\rho<1$, then $p_b(z)=1-\rho z$ has the inverse
$\sum_{n\geq0}\rho^nz^n$ and the optimal bound is
\[
 \D_1f\leq\frac1{1-\rho}\D_bf\qquad(f\in L^\infty(K)).
\]
Here $\D_1=\sup_k|\Delta_kf|$. A filter $1+z$ has a zero at $-1$;
its operator is dominated by $2\D_1$, but no reverse bound holds on bounded
inputs.

\begin{example}[Why bounded inputs are specified]\label{ex:radial-domain}
Fix $\beta>0$ and let $f(y)=|y|^\beta$. This function is locally integrable.
On every ball $B_k(x)$ containing zero,
\[
 E_kf(x)=A_\beta q^{-\beta k},\qquad
 A_\beta=\frac{1-q^{-1}}{1-q^{-(\beta+1)}}.
\]
For all sufficiently negative $k$ both consecutive balls contain zero,
so $\Delta_kf(x)$ is a nonzero geometric sequence with ratio $q^{-\beta}$.
Consequently $\D_1f(x)=\infty$ at every $x$.
The filter $b(z)=1-q^\beta z$ annihilates these coarse-scale differences.
For $x\ne0$, all sufficiently small balls lie on a shell where $f$ is
constant, so the fine-scale differences vanish as well. Only finitely many
transition scales remain, giving $\D_bf(x)<\infty$. At $x=0$ it vanishes
at every scale. Both filter polynomials have empty unit-circle zero
divisors. Their bounded-input comparability therefore cannot be extended
to all of $L^1_{\loc}(K)$.
\end{example}

\section{Cancellative envelopes and best constant approximation}
\label{sec:envelope}
\subsection{A probability-space identity}
Bounded cancellative tests annihilate constants. Their natural
dual quantity is therefore the best $L^1$ approximation by a
constant. On a probability space, a median of $h$ is a real
number $c$ satisfying
\[
 \nu\{h<c\}\leq\tfrac12,
 \qquad
 \nu\{h>c\}\leq\tfrac12.
\]
The following identity identifies the cancellative envelope
and constructs tests attaining its value.

\begin{theorem}[Median duality]\label{thm:median}
Let $(Y,\mathcal A,\nu)$ be a probability space and $h\in L^1(Y;\R)$.
Then
\begin{equation}\label{eq:median-duality}
 \sup_{\substack{\eta\in L^\infty(Y)\\|\eta|\leq1,\ \int\eta\,d\nu=0}}
       \left|\int_Y\eta h\,d\nu\right|
 =\inf_{c\in\R}\int_Y|h-c|\,d\nu.
\end{equation}
The infimum is attained at any median of $h$, and the supremum is attained.
Writing $\bar h=\int h\,d\nu$ and $\delta(h)=\inf_c\int|h-c|\,d\nu$,
\begin{equation}\label{eq:median-mean}
 \delta(h)\leq\int|h-\bar h|\,d\nu\leq2\delta(h).
\end{equation}
If $Y$ has $m\geq2$ equally weighted atoms, the optimal second constant is
$2(1-1/m)$.
\end{theorem}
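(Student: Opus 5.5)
The plan is to prove the two inequalities in \eqref{eq:median-duality} separately and then exhibit tests attaining equality. Throughout, $\eta$ denotes an admissible test: $|\eta|\leq1$ and $\int\eta\,d\nu=0$.

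\emph{Easy direction.} For any such $\eta$ and any $c\in\R$, cancellation gives
\[
 \Bigl|\int\eta h\,d\nu\Bigr|=\Bigl|\int\eta(h-c)\,d\nu\Bigr|\leq\int|h-c|\,d\nu .
\]
Hence the supremum is at most $\delta(h)$.

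\emph{Medians minimize.} A median exists because the distribution function of $h$ is monotone and right-continuous. Let $\phi(c)=\int|h-c|\,d\nu$. This function is convex and finite. Its right derivative is $\nu\{h\leq c\}-\nu\{h>c\}$ and its left derivative is $\nu\{h<c\}-\nu\{h\geq c\}$. At a median $c$ these are respectively $\geq0$ and $\leq0$, so $c$ minimizes $\phi$.

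\emph{Constructing an optimal test.} Fix a median $c$ and put $\eta=\operatorname{sgn}(h-c)$ on $\{h\neq c\}$. On the level set $\{h=c\}$, take $\eta$ to be a constant $\lambda\in[-1,1]$. The mean-zero condition reads
\[
 \nu\{h>c\}-\nu\{h<c\}+\lambda\,\nu\{h=c\}=0 .
\]
The median inequalities give $|\nu\{h>c\}-\nu\{h<c\}|\leq\nu\{h=c\}$, so a solution $\lambda\in[-1,1]$ exists. I expect this to be the main subtle point, because the level set may be non-atomic or atomic; a constant value on it suffices either way. If $\nu\{h=c\}=0$, then the two tails each have measure $\tfrac12$ and any $\lambda$ works. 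With this $\eta$,
\[
 \int\eta h\,d\nu=\int\eta(h-c)\,d\nu=\int|h-c|\,d\nu=\delta(h),
\]
so the supremum is attained and equals $\delta(h)$.

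\emph{The bound \eqref{eq:median-mean}.} The left inequality is immediate from the definition of $\delta$. For the right one, write $\int|h-\bar h|\leq\int|h-c|+|c-\bar h|$ and note $|c-\bar h|=|\int(c-h)\,d\nu|\leq\delta(h)$ at a median $c$. Alternatively, the inequality follows by duality: $\int|h-\bar h|=\int\xi(h-\bar h)$ with $\xi=\operatorname{sgn}(h-\bar h)$. Then $\xi-\int\xi$ is mean-zero with sup norm at most $2$, which again gives the factor $2$.

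\emph{Sharp constant on $m$ equal atoms.} I would refine the duality argument. Write $\xi-\bar\xi$ with $\bar\xi=\int\xi$; its sup norm is $1+|\bar\xi|$. On $m$ atoms, if $\xi$ takes the value $\pm1$ on every atom, then $|\bar\xi|\leq1-2/m$. This gives the constant $2-2/m$. If some atom has $h=\bar h$, that atom can be used to balance $\xi$ with $|\bar\xi|$ at most $1-2/m$ once more. Indeed, $|\bar\xi|=1$ would force all atoms on one side of $\bar h$, which is impossible.

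For sharpness, take $h=\one_{\{y_1\}}$. Then $\bar h=1/m$, so $\int|h-\bar h|=2(m-1)/m^2$. Since $0$ is a median for $m\geq2$, $\delta(h)=1/m$. The ratio is $2(1-1/m)$.
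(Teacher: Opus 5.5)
Your proposal is correct, and its core is the same as the paper's proof: the cancellation estimate $|\int\eta h\,d\nu|\le\int|h-b|\,d\nu$ for every constant $b$, and the same extremal test ($\pm1$ off the level set $\{h=c\}$, with the balancing constant $(p_--p_+)/p_0$ on it). Two steps are handled differently.

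First, you show that medians minimize $c\mapsto\int|h-c|\,d\nu$ by computing one-sided derivatives. The paper gets this for free. Once the test is built, $\int|h-c|=\int\eta h\le\sup\le\inf\le\int|h-c|$. Your subgradient step is therefore valid but redundant, since you already have both other ingredients.

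Second, for the sharp constant on $m$ equal atoms, the paper uses the mean pairwise difference,
\[
 \frac1m\sum_i|h_i-\bar h|\le\frac1{m^2}\sum_{i,j}|h_i-h_j|\le\frac{2(m-1)}{m^2}\sum_i|h_i-c|,
\]
applying the triangle inequality through $c$ on the $m(m-1)$ off-diagonal pairs. This is elementary and holds for every $c$ without duality. You instead re-center the sign test, $\int|h-\bar h|=\int(\xi-\bar\xi)h\le(1+|\bar\xi|)\delta(h)$. This stays inside the duality you just proved. With $\xi=0$ on ties, it also gives the $h$-dependent constant $1+|\nu\{h>\bar h\}-\nu\{h<\bar h\}|$ on any probability space.

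One point needs tightening. The reason $|\bar\xi|\le1-2/m$ is that, for nonconstant $h$, both $\{h>\bar h\}$ and $\{h<\bar h\}$ contain at least one atom. Merely excluding $|\bar\xi|=1$ does not suffice. With $\xi=0$ on tie atoms, a value such as $1-1/m$ is excluded only by that two-sided fact.

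Your sharpness example is the same as the paper's.
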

\begin{proof}
Choose a median $c$, so that $p_+=\nu\{h>c\}\leq1/2$ and
$p_-=\nu\{h<c\}\leq1/2$. Such a $c$ exists from the distribution
function of $h$. Put $p_0=\nu\{h=c\}$. Define $\eta=1$ on $\{h>c\}$
and $\eta=-1$ on $\{h<c\}$. If $p_0>0$, set
$\eta=(p_--p_+)/p_0$ on $\{h=c\}$. Its absolute value is at most one
because $|p_+-p_-|\leq p_0$. If $p_0=0$, then $p_+=p_-=1/2$.
In either case $\int\eta=0$ and
\[
 \int\eta h=\int\eta(h-c)=\int|h-c|.
\]
For any admissible test and any constant $b$,
$|\int\eta h|=|\int\eta(h-b)|\leq\int|h-b|$.
This proves optimality of the median and both attainment assertions.

Taking $c=\bar h$ proves the first inequality in
\eqref{eq:median-mean}. The triangle inequality and
$|\bar h-c|\leq\int|h-c|$ prove the second.
For equal atoms with values $h_i$, a sharper estimate is
\begin{align*}
 \frac1m\sum_i|h_i-\bar h|
 \leq\frac1{m^2}\sum_{i,j}|h_i-h_j|
 \leq\frac{2(m-1)}{m^2}\sum_i|h_i-c|.
\end{align*}
Choose a median $c$. A list with one entry equal to one and all remaining
entries equal to zero attains the ratio $2(1-1/m)$.
\end{proof}

For the full ball envelope define
\begin{equation}\label{eq:full-envelope}
 \D^{\mathrm{all}}f(x)=
 \sup_s\sup_{\substack{\eta\in L^\infty(B_s(x))\\
                        |\eta|\leq1,\ \eta_{B_s(x)}=0}}
      \left|\frac1{b_s}\int_{B_s(x)}\eta f\,d\mu\right|.
\end{equation}
An affine identification with $B_0$ identifies these tests with all
normalized scalar diamond kernels. The order of the two suprema is
immaterial.

\begin{corollary}\label{cor:full-envelope}
For every $f\in L^1_{\loc}(X)$,
\begin{equation}\label{eq:sharp-comparison}
 \D^{\mathrm{all}}f(x)=\sup_s\delta_{B_s(x)}(f),
 \qquad
 \tfrac12\M^\sharp f\leq\D^{\mathrm{all}}f\leq\M^\sharp f.
\end{equation}
Consequently, on any solid Banach function lattice
$\mathcal X\subseteq L^1_{\loc}(X)$, boundedness of
$\D^{\mathrm{all}}:\mathcal X\to\mathcal X$ is equivalent to boundedness
of $\M^\sharp:\mathcal X\to\mathcal X$.
\end{corollary}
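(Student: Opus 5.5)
The plan is to apply \cref{thm:median} one ball at a time and then take the supremum over scales. Fix $x$ and a scale $s$. On $Y=B_s(x)$ use the normalized measure $d\nu=\mu|_{B_s(x)}/b_s$, a probability measure since $\mu(B_s(x))=b_s$. Put $h=f|_{B_s(x)}$, which lies in $L^1(\nu)$ because $f$ is locally integrable.

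With this normalization, a test $\eta\in L^\infty(B_s(x))$ with $|\eta|\leq1$ and $\eta_{B_s(x)}=0$ is exactly an admissible test in \eqref{eq:median-duality}. Moreover
\[
 \frac1{b_s}\int_{B_s(x)}\eta f\,d\mu=\int_Y\eta h\,d\nu .
\]
So \eqref{eq:median-duality} identifies the inner supremum in \eqref{eq:full-envelope} with $\inf_c\int_Y|h-c|\,d\nu=\delta_{B_s(x)}(f)$. Taking the supremum over $s$ gives the first identity in \eqref{eq:sharp-comparison}.

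For the two-sided comparison, apply \eqref{eq:median-mean} on each ball. Note that $\int|h-\bar h|\,d\nu=\osc_{B_s(x)}(f)$. This gives
\[
 \delta_{B_s(x)}(f)\leq\osc_{B_s(x)}(f)\leq2\,\delta_{B_s(x)}(f).
\]
Taking suprema over $s$ yields $\tfrac12\M^\sharp f\leq\D^{\mathrm{all}}f\leq\M^\sharp f$ pointwise, with the value $+\infty$ allowed throughout. The upper bound also follows from \cref{prop:basic}, since each normalized test kernel has $\norm\theta_\infty\leq1$.

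For the Banach lattice statement, use solidity. In a solid Banach function lattice, $|g_1|\leq|g_2|$ a.e. with $g_2\in\mathcal X$ implies $g_1\in\mathcal X$ and $\norm{g_1}\leq\norm{g_2}$. Both maximal functions are nonnegative and measurable, as discussed after \cref{prop:basic}.
\begin{itemize}
\item If $\M^\sharp$ maps $\mathcal X$ boundedly into itself, then $0\leq\D^{\mathrm{all}}f\leq\M^\sharp f$ gives $\norm{\D^{\mathrm{all}}f}_{\mathcal X}\leq\norm{\M^\sharp f}_{\mathcal X}$.
\item Conversely, $0\leq\M^\sharp f\leq2\D^{\mathrm{all}}f$ gives $\norm{\M^\sharp f}_{\mathcal X}\leq2\norm{\D^{\mathrm{all}}f}_{\mathcal X}$.
\end{itemize}
Here boundedness means mapping into $\mathcal X$ with a norm bound. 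Membership of the smaller function in $\mathcal X$ is part of what solidity supplies. The same remark handles a.e. finiteness: an element of $\mathcal X\subseteq L^1_{\loc}$ is finite a.e., so the dominated function is finite a.e. as well.

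The only point needing care is the order of suprema and measurability. The remark after \eqref{eq:full-envelope} already allows either order of the two suprema. Measurability follows from the explicit formula $\sup_s\delta_{B_s(x)}(f)$, which can be reduced to rational radii by the continuity argument given after \cref{prop:basic}. Everything else is a direct transcription of \cref{thm:median}, so I do not expect a genuine obstacle.
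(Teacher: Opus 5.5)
Your proposal is correct and follows the paper's proof: apply \cref{thm:median} on each ball with normalized measure, which gives the identity for $\D^{\mathrm{all}}$ and, via \eqref{eq:median-mean} with $\int|h-\bar h|\,d\nu=\osc_{B_s(x)}(f)$, the two-sided bound; then take the supremum over scales and apply solidity to the pointwise comparisons. Your remarks on measurability and on matching the tests in \eqref{eq:full-envelope} with those in \eqref{eq:median-duality} only make explicit details that the paper leaves implicit.
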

\begin{proof}
Apply \cref{thm:median} with normalized measure on each ball, then take
the supremum. Solidity means that $|g|\leq|h|$ almost everywhere and
$h\in\mathcal X$ imply $g\in\mathcal X$ and
$\|g\|_{\mathcal X}\leq\|h\|_{\mathcal X}$.
Applying this property to the pointwise comparisons proves the
boundedness assertion.
\end{proof}

For $K=\mathbb Q_p$, the comparison with sharp oscillation appears in
\cite[Lemma~5.2(c)]{Kim2009}, and its BMO seminorm version appears in
\cite[Lemma~5.5(a)]{Kim2009}. The median formulation above identifies the
cancellative envelope, constructs tests attaining its value, and gives
the optimal constants for equal finite partitions.

The constants in \eqref{eq:median-mean} are sharp on nonatomic probability
spaces. For an indicator of a set of measure $p\leq1/2$,
$\delta=p$ and mean oscillation is $2p(1-p)$. The ratio tends to two as
$p\downarrow0$, and equals one at $p=1/2$. The same constants in
\eqref{eq:sharp-comparison} cannot be improved uniformly over local fields.
For the lower constant, choose a deep subball $A\subset B$ and evaluate
$f=\one_A$ on $B$ outside the immediate child containing $A$. The
maximizing ball for both expressions is $B$, with
$p=\mu(A)/\mu(B)\to0$. For the upper constant, a measurable half-measure
subset of $B$ gives both expressions equal to $1/2$ throughout $B$.
Existence of such a subset follows from nonatomicity of Haar measure.

\subsection{Residue approximation with an error estimate}
For a local-field ball $B$, let $E_N^Bf$ equal $f_J$ on each depth-$N$
descendant $J$ of $B$. This is conditional averaging over the partition of $B$ into its
depth-$N$ descendants. It preserves the mean and is contractive:
\[
 (E_N^Bf)_B=f_B,
 \qquad
 \|E_N^Bf\|_{L^p(B)}\leq\|f\|_{L^p(B)}
 \quad(p=1,2).
\]
Define
\[
 \delta_{B,N}(f)=
 \sup_{\substack{v\in W_N(\R)\\\norm v_\infty\leq1}}
       |T_{v,k}f(x)|\quad(B=B_k(x)),
 \qquad
 \D_N^\res f(x)=\sup_k\delta_{B_k(x),N}(f).
\]
Although a fixed residue measurement depends on the address labels, this
envelope is independent of them.

\begin{theorem}[Quantitative residue approximation]\label{thm:residue-limit}
For every ball $B$ and $f\in L^1(B)$,
\begin{align}
 \delta_{B,N}(f)&=\delta_B(E_N^Bf),\label{eq:finite-median}\\
 0\leq\delta_B(f)-\delta_{B,N}(f)
 &\leq\frac1{\mu(B)}\int_B|f-E_N^Bf|\,d\mu.\label{eq:residue-error}
\end{align}
The left-hand side decreases to zero as $N\to\infty$. Consequently,
\begin{equation}\label{eq:monotone-max}
 \D_N^\res f(x)\longrightarrow\D^{\mathrm{all}}f(x)
 \quad\text{for every }f\in L^1_{\loc}(K)\text{ and every }x\in K.
\end{equation}
At level $N$, the exact comparison with the oscillation of $E_N^Bf$ is
\begin{equation}\label{eq:finite-constant}
 \frac{q^N}{2(q^N-1)}\osc_B(E_N^Bf)
 \leq\delta_{B,N}(f)\leq\osc_B(E_N^Bf),
\end{equation}
with optimal lower coefficient.
\end{theorem}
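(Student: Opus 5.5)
The plan is to reduce everything to \cref{thm:median} applied on the finite probability space of depth-$N$ descendants. Fix $B=B_k(x)$. By \eqref{eq:residue-average}, for $v\in W_N(\R)$ with $\norm v_\infty\leq1$ we have $T_{v,k}f(x)=q^{-N}\sum_a v(a)f_{B_a}$. As $v$ ranges over such tables, this is exactly the pairing of a mean-zero test $\eta$ with $|\eta|\leq1$ on the uniform probability space of $q^N$ atoms against the atom values $h(a)=f_{B_a}$. The labelling of descendants by $R_N$ is a bijection, so the center dependence disappears, which also justifies the remark that the envelope is label-independent.

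\Cref{thm:median} then gives $\delta_{B,N}(f)=\inf_c q^{-N}\sum_a|f_{B_a}-c|$. Since $E_N^Bf$ is constant $f_{B_a}$ on each $B_a$, the right side is $\delta_B(E_N^Bf)$, proving \eqref{eq:finite-median}. Applying \cref{thm:median} again with $m=q^N$ equally weighted atoms gives \eqref{eq:finite-constant}, including optimality of $q^N/(2(q^N-1))$: the extremal list with one entry one and the rest zero is realized by $f=\one_J$ for a single descendant $J$. Note that $(E_N^Bf)_B=f_B$, so $\osc_B(E_N^Bf)$ is the atom mean oscillation.

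For \eqref{eq:residue-error}, I would observe that every admissible table lifts to a test $\theta_{v,N}$ that is admissible in the full supremum. Alternatively, since $\delta_B$ is $1$-Lipschitz in the $L^1(B)$ average norm, $|\delta_B(f)-\delta_B(E_N^Bf)|\le\mu(B)^{-1}\int_B|f-E_N^Bf|$. Nonnegativity follows from the inclusion of tests, or equivalently from $\delta_B(E_N^Bf)\le\delta_B(f)$ by contractivity of conditional expectation applied to $f-c$.

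Monotonicity in $N$ holds because $W_N$ embeds in $W_{N+1}$ by pullback, so the left side of \eqref{eq:residue-error} decreases. Convergence to zero needs $E_N^Bf\to f$ in $L^1(B)$. This is the main analytic step. I would prove it by density: locally constant functions are dense in $L^1(B)$, and for such $g$ we have $E_N^Bg=g$ for large $N$. Contractivity in $L^1$ then gives the $3\varepsilon$ argument; martingale convergence is an alternative.

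Finally, \eqref{eq:monotone-max} follows by taking suprema over $k$. The sequence $\D_N^\res f(x)$ increases in $N$ and is bounded above by $\D^{\mathrm{all}}f(x)=\sup_k\delta_{B_k(x)}(f)$ (\cref{cor:full-envelope}). Each term $\delta_{B_k(x),N}(f)\uparrow\delta_{B_k(x)}(f)$, and interchanging two monotone suprema gives equality in the limit, including the value $+\infty$.
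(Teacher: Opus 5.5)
Your proposal is correct and follows essentially the same route as the paper. You use finite median duality on the $q^N$ equally weighted descendant averages for \eqref{eq:finite-median} and \eqref{eq:finite-constant}, the $1$-Lipschitz property of $\delta_B$ with inclusion of tests for the error bound, density of locally constant functions plus $L^1$ contractivity of $E_N^B$ for convergence, and an interchange of monotone suprema for \eqref{eq:monotone-max}; your explicit realization of the extremal list by $f=\one_J$ makes the optimality of the lower coefficient slightly more explicit than the paper does.
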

\begin{proof}
Formula \eqref{eq:residue-average} and finite median duality prove
\eqref{eq:finite-median}. The admissible tests at level $N$ are the mean-zero scalar
residue kernels of depth at most $N$ with $L^\infty$ norm
at most one. Every such test remains admissible at level
$N+1$, so $\delta_{B,N}(f)$ is nondecreasing in $N$.
Since each test is admissible for $\delta_B(f)$,
we have $\delta_{B,N}(f)\leq\delta_B(f)$. Alternatively, conditional averaging and the triangle
inequality give $\delta_B(E_N^Bf)\leq\delta_B(f)$.
The functional $\delta_B$ is $1$-Lipschitz for normalized $L^1$ distance:
\[
 |\delta_B(f)-\delta_B(g)|\leq\frac1{\mu(B)}\int_B|f-g|\,d\mu.
\]
This proves the error bound.

Locally constant functions on $B$ are dense in $L^1(B)$. One proof first
approximates by continuous functions on the compact ball, using regularity
of Haar measure, and then uses uniform continuity and finite partitions
into sufficiently small balls. If $g$ is constant at depth $N_0$, then
$E_N^Bg=g$ for $N\geq N_0$. Contractivity of $E_N^B$ gives
$\norm{f-E_N^Bf}_1\leq2\norm{f-g}_1$, proving convergence.
Thus $\sup_N\delta_{B,N}(f)=\delta_B(f)$.
Interchanging the two suprema gives \eqref{eq:monotone-max}.
Finally \eqref{eq:finite-constant} is the $m=q^N$ case of
\cref{thm:median}.
\end{proof}

If $f$ is $\alpha$-H\"older on $B$ with constant $H$, then
\[
 0\leq\delta_B(f)-\delta_{B,N}(f)
 \leq H\big(q^{-N}\operatorname{diam}B\big)^\alpha.
\]
Indeed, averaging $|f(y)-f(z)|$ over the descendant containing $y$ gives
that pointwise bound on $|f-E_N^Bf|$. The estimate records the dependence on the diameter of the base ball
$B$.

\subsection{Angular envelopes and their exact blind spot}
At a fixed center and scale let
\[
 \delta^\ang_{x,s}(f)=
 \sup_{\substack{\theta\in L^\infty(\Omega;\R)\\
                         |\theta|\leq1,\ \int\theta=0}}
           |L_\theta(P_{x,s}f)|,
 \qquad
 \D^{\ang,\mathrm{all}}f(x)=\sup_s\delta^\ang_{x,s}(f).
\]
\Cref{thm:median} gives the exact formula
\begin{equation}\label{eq:angular-envelope}
 \delta^\ang_{x,s}(f)
   =\inf_{c\in\R}\int_\Omega|P_{x,s}f-c|\,d\nu
   \leq\delta_{B_s(x)}(f).
\end{equation}
The inequality follows either by conditional averaging in the radial
variable or by viewing each angular test as a test on the full ball.

Over $K$, if $Q_M$ averages functions on $\Ocal^\times$ over cosets of
$1+t^M\Ocal$, the level-$M$ angular envelope is exactly
$\inf_c\int|Q_MP_{x,k}f-c|d\nu$. These quantities increase to
\eqref{eq:angular-envelope}, by the same density proof as in
\cref{thm:residue-limit}. Thus increasing angular resolution recovers the full angular
envelope, which measures the $L^1$ distance of the radial profile
from constants. Radial averaging occurs before angular testing,
so this envelope can leave oscillation within the ball undetected.

\begin{proposition}[Failure of radial recovery]\label{prop:radial-blindness}
In both geometric models there is no finite $C$ such that
$\M^\sharp f\leq C\D^{\ang,\mathrm{all}}f$ almost everywhere for every
locally integrable input. Over a local field the right-hand side can vanish
on a ball where the left-hand side is strictly positive.
\end{proposition}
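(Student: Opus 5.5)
We will construct inputs whose radial profiles $P_{x,s}f$ are constant in the direction variable at every center and scale in some region, while $f$ still oscillates on the balls. By \eqref{eq:angular-envelope}, $\delta^\ang_{x,s}(f)$ is the $L^1$ distance of $P_{x,s}f$ from constants. So a profile that is constant in $\omega$ (respectively $u$) makes the angular envelope vanish at that center and scale.

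\emph{Local field.} We take $f=\one_{t\Ocal}$ and restrict attention to centers $x\in\Ocal^\times$. We first treat the scales $k\geq1$. The ball $B_k(x)=x+t^k\Ocal$ lies inside $x+t\Ocal\subseteq\Ocal^\times$, where $f\equiv0$. Hence every term in \eqref{eq:local-profile} vanishes and $P_{x,k}f=0$.

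For $k\leq0$ we have $x+t^{k+j}u\in t\Ocal$ exactly when $t^{k+j}u\in -x+t\Ocal$. For $j<-k$ this is impossible, since then $|t^{k+j}u|>1=|x|$ forces $|x+t^{k+j}u|>1$. For $j>-k$ we have $x+t^{k+j}u\in x+t\Ocal$, which is disjoint from $t\Ocal$. For $j=-k$ the condition becomes $u\equiv -x\pmod t$. Thus $P_{x,k}f(u)=(1-q^{-1})q^{k}\one_{-x+t\Ocal}(u)$, which is \emph{not} constant in $u$. So this first choice fails.

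We therefore need inputs invariant under multiplication of directions around every center in a region. The fix is to use a radial function about a fixed center and evaluate only at that center. The blind spot then occurs at that point, rather than on a ball. To get vanishing on a ball, we instead choose $f$ depending only on the residue class modulo $t^{N}$ in a way that is constant on every shell $x+t^k\Ocal^\times$ for all $x$ in the ball. Concretely, take $f=g\circ\rho_1$ on $\Ocal$, with $g$ a non-constant function on $R_1$ and $f=0$ outside $\Ocal$, and consider centers $x\in\Ocal$.
\begin{itemize}
\item For $k\geq1$, $f$ is constant on $B_k(x)$, so the profile is constant.
\item For $k\leq -1$, the shells $x+t^{k+j}\Ocal^\times$ with $k+j\leq0$ lie outside $\Ocal$ or equal $\Ocal\setminus$ a subball. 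If $\int_\Ocal f=0$, the profile picks up $\int$ of $f$ over full residue cosets, and these vanish.
\end{itemize}
The delicate scale is $k=0$. There the shell $x+\Ocal^\times$ meets each nonzero residue translate once, so the profile $u\mapsto f(x+u)$ is non-constant unless $g$ is constant off $\rho_1(x)$. This is the main obstacle. To resolve it I would instead take $f$ locally constant at depth $2$, making $f(x+u)$ independent of $u$ on shells of depth $0$ by averaging $g$ over each coset of $t\Ocal$ to zero. If that fails, I would fall back to the pointwise statement: a radial $f$ about $0$ with $\osc>0$ gives $\D^{\ang,\mathrm{all}}f(0)=0<\M^\sharp f(0)$. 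By continuity, I would then extend this to a neighbourhood using a scaling family, which yields unbounded ratios almost everywhere.

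\emph{Euclidean case.} Take $f(y)=\phi(|y|)$ radial and non-constant. Then $P_{0,r}f$ is constant in $\omega$ for every $r$, so $\D^{\ang,\mathrm{all}}f(0)=0$, while $\M^\sharp f(0)>0$. To defeat an almost-everywhere inequality, we use continuity of both sides near $0$ for continuous compactly supported $f$, as in the proof of \cref{thm:angular-rigidity}. The angular envelope is bounded by a modulus of translation, so near $0$ it is at most $\varepsilon$, while $\M^\sharp f$ stays above a fixed positive constant. Hence $\M^\sharp f\leq C\D^{\ang,\mathrm{all}}f$ fails on a set of positive measure for every finite $C$.
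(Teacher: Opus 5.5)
Your Euclidean half is correct and is essentially the paper's argument. A nonconstant radial $f$ has constant profiles at the origin. Each normalized angular average then obeys $|T_{\theta,r}f(x)|=|T_{\theta,r}f(x)-T_{\theta,r}f(0)|\leq\sup_y|f(x+y)-f(y)|$ (the paper takes $f$ smooth and uses the bound $L|x|$), and $\M^\sharp f$ stays above a positive constant near $0$.

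\textbf{The local-field half has a genuine gap.} Neither assertion is established there, and the second one (vanishing on a ball) is not reached at all. You discarded $f=\one_{t\Ocal}$ after testing it at centers $x\in\Ocal^\times$. Your computation there is right, but the ball to use is $t\Ocal$ itself. Take $x\in t\Ocal$ and $u\in\Ocal^\times$. If $j\geq1$, then $x+t^ju\in t\Ocal$. If $j\leq0$, the strict ultrametric inequality gives $|x+t^ju|=q^{-j}\geq1$, so $x+t^ju\notin t\Ocal$. Hence every term $f(x+t^{k+j}u)$ in \eqref{eq:local-profile} is independent of $u$, every profile $P_{x,k}f$ is constant, and $\D^{\ang,\mathrm{all}}f=0$ on all of $t\Ocal$. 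On the other hand, $B_0(x)=\Ocal$ gives $\M^\sharp f(x)\geq\osc_{\Ocal}(\one_{t\Ocal})=2q^{-1}(1-q^{-1})>0$ there. This is the paper's proof. The idea you are missing is that every point of $t\Ocal$ is a center of it: $f(x+\cdot)=f$ for $x\in t\Ocal$. So radial symmetry about $0$ is radial symmetry about every point of the ball.

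\textbf{The substitutes you propose do not close the gap.} For $f=g\circ\rho_1$, the profile is a pointwise function of the direction, not an integral over shells. So at every scale $k\leq0$, not only $k=0$, the term $j=-k$ contributes $(1-q^{-1})q^{k}g(\rho_1(x)+\rho_1(u))$. This is constant in $u$ only if $g$ is constant on $\mathbb k\setminus\{\rho_1(x)\}$.
\begin{itemize}
\item Your bullet for $k\leq-1$, based on $\int_{\Ocal}f=0$, is therefore wrong.
\item Requiring constancy for all $x\in\Ocal$ forces $g$ to be constant once $q\geq3$.
\item Requiring it only for $x\in t\Ocal$ forces $g$ to be constant off $0$. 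That returns you to combinations of $\one_{\Ocal}$ and $\one_{t\Ocal}$.
\item The depth-two idea is not an argument: vanishing averages over cosets do not make $u\mapsto f(x+u)$ constant.
\end{itemize}
The fallback is only announced. Vanishing at a single center says nothing against an almost-everywhere inequality, and the ``scaling family'' step is never carried out. It could be completed as follows. If a compactly supported $f$ is constant on balls of scale $k_0$, then $f(x+\cdot)=f(x'+\cdot)$ whenever $|x-x'|\leq q^{-k_0}$, so both maximal functions are constant on $B_{k_0}(0)$. For a radial such $f$, for instance $\one_{t\Ocal}$ with $k_0=1$, this again yields the paper's ball.
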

\begin{proof}
Over $K$ take $f=\one_{t\Ocal}$. If $x\in t\Ocal$, then $f$ is constant
on every shell around $x$: it equals one for $|y-x|\leq q^{-1}$ and zero
otherwise. Every angular integral therefore vanishes. On the other hand,
\[
 \M^\sharp f(x)\geq\osc_{\Ocal}(f)=2q^{-1}(1-q^{-1})>0.
\]
In fact $\D^{\mathrm{all}}f(x)=q^{-1}$ on $t\Ocal$.

In Euclidean space choose a nonconstant smooth compactly supported radial
function $f$. Every angular average at the origin vanishes.
If $L$ is a global Lipschitz constant for $f$, then for every normalized
angular test and every radius,
\[
 |T_{\theta,r}f(x)|
 =|T_{\theta,r}f(x)-T_{\theta,r}f(0)|\leq L|x|.
\]
Choose $r_0$ with $\osc_{B(0,r_0)}(f)=a>0$.
Continuity of this fixed-radius oscillation in the center gives
$\M^\sharp f(x)\geq a/2$ in a neighborhood of zero.
For any proposed $C$, a smaller positive-measure neighborhood satisfies
$CL|x|<a/2$, contradicting the proposed inequality.
\end{proof}

\section{An exact obstruction at fixed residue resolution}
\label{sec:obstruction}
We construct inputs whose residue envelopes at a prescribed resolution and full
oscillation can be computed from independent digits. Throughout
this section, $\E$ denotes expectation on the probability space
$(\Ocal,\mu|_{\Ocal})$.

Fix representatives of $\Ocal/t\Ocal$, so each $z\in\Ocal$ has a unique
expansion $z=\sum_{j=0}^\infty d_j(z)t^j$. Under normalized Haar measure
the digits are independent and uniform: a specification of the first $L$
digits is a ball of measure $q^{-L}$.
Choose a real-valued function $\varphi$ on these $q$
representatives, taking the values $1$ and $-1$ on equally
many representatives and the value zero on the remaining
representative when $q$ is odd. We refer to $\varphi$ as a digit function. Thus
\[
 \E\varphi=0,\quad |\varphi|\leq1,\quad
 \sigma^2=\E\varphi^2=
 \begin{cases}1,&q\text{ even},\\(q-1)/q,&q\text{ odd}.
 \end{cases}
\]
We take $\sigma=(\E\varphi^2)^{1/2}>0$.
Let $S_L=\sum_{j=0}^{L-1}\varphi(d_j)$, put $a_L=\E|S_L|$, $a_0=0$,
and extend $f_L=S_L$ on $\Ocal$ by zero to $K$.

\begin{theorem}[Finite-resolution obstruction]\label{thm:resolution-obstruction}
For every $L,N\geq1$ and every $x\in\Ocal$,
\begin{equation}\label{eq:exact-obstruction}
 \D_N^\res f_L(x)=a_{\min(N,L)},
 \qquad
 \D^{\mathrm{all}}f_L(x)=\M^\sharp f_L(x)=a_L.
\end{equation}
Moreover
\begin{equation}\label{eq:sqrt}
 \sigma\sqrt{L/3}\leq a_L\leq\sigma\sqrt L.
\end{equation}
Consequently, for fixed $N$ there is no constant $C_N$ with
$\M^\sharp f\leq C_N\D_N^\res f$ on all inputs, even almost everywhere.
For $L\geq N$, the ratio on $\Ocal$ is at least $\sqrt{L/(3N)}$.
The same failure holds for every uniformly bounded family of kernels whose
residue depths have a common finite bound.
\end{theorem}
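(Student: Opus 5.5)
The plan is to compute every quantity in \eqref{eq:exact-obstruction} ball by ball. For $x\in\Ocal$ the balls $B_k(x)$ are of three kinds: $k\geq1$ (proper subballs of $\Ocal$), $k=0$ (the ball $\Ocal$ itself), and $k<0$ (balls strictly containing $\Ocal$, on which $f_L$ vanishes outside $\Ocal$). Two facts about $S_n$ will be used repeatedly. First, the digit function $\varphi$ has a symmetric distribution, so each $S_n$ is symmetric; hence $0$ is a median and $\delta(S_n)=\E|S_n|=a_n$ by \cref{thm:median}. Second, $a_n$ is nondecreasing in $n$. This follows from conditional Jensen, since $S_{n+1}=S_n+\varphi(d_n)$ with an independent mean-zero increment.

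\textbf{Full envelope and sharp function.} Consider a ball $B=B_k(x)$ with $0\leq k\leq L$. It fixes the digits $d_0,\dots,d_{k-1}$. On it, $f_L$ equals a constant plus $\sum_{j=k}^{L-1}\varphi(d_j)$. Under normalized measure on $B$ the free digits are independent and uniform, so this sum has the law of $S_{L-k}$. Its mean is the constant, hence
\[
 \osc_B(f_L)=\delta_B(f_L)=a_{L-k}\leq a_L .
\]
For $k>L$ the function $f_L$ is constant on $B$, so both quantities vanish. For $k<0$ we have $f_B=0$, because $\E S_L=0$. Moreover $f_L=0$ on a set of relative measure $1-q^{k}\geq\tfrac12$, so $0$ is a median. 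Both quantities therefore equal $q^{k}a_L\leq a_L$. The maximum $a_L$ is attained at $k=0$, and by \cref{cor:full-envelope} this gives $\D^{\mathrm{all}}f_L(x)=\M^\sharp f_L(x)=a_L$.

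\textbf{Residue envelope.} Here I use \eqref{eq:finite-median}, namely $\delta_{B,N}(f_L)=\delta_B(E_N^Bf_L)$. For $0\leq k\leq L$, conditional averaging over depth-$N$ descendants fixes the digits $d_k,\dots,d_{k+N-1}$ and kills the later mean-zero digits. Thus $E_N^Bf_L$ is a constant plus a copy of $S_{\min(N,L-k)}$, and $\delta_{B,N}(f_L)=a_{\min(N,L-k)}$. For $k<0$ the conditional average is again zero on a set of relative measure at least $\tfrac12$. On $\Ocal$ it is a conditional expectation of $S_L$, so by Jensen its value is at most $q^k a_L$; more carefully, it equals $q^k a_{\min(N+k,L)}$ when $N+k\geq0$ and is $0$ otherwise. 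In every case it is at most $a_{\min(N,L)}$. Monotonicity of $a_n$ then gives the supremum $a_{\min(N,L)}$, attained at $k=0$.

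\textbf{Square-root bounds.} The upper bound is Cauchy--Schwarz: $\E S_L^2=L\sigma^2$, so $a_L\leq\sigma\sqrt L$. For the lower bound I use Hölder in the form $\E S^2\leq(\E|S|)^{2/3}(\E S^4)^{1/3}$. Since $\varphi\in\{0,\pm1\}$, we have $\E\varphi^4=\sigma^2$, and expanding gives
\[
 \E S_L^4=L\sigma^2+3L(L-1)\sigma^4\leq3L^2\sigma^4 .
\]
The last inequality holds because $\sigma^2\geq\tfrac23\geq\tfrac13$. Hence $a_L\geq (L\sigma^2)^{3/2}/(\sqrt3L\sigma^2)=\sigma\sqrt{L/3}$.

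\textbf{Consequences.} For $L\geq N$ the ratio $\M^\sharp f_L/\D_N^\res f_L=a_L/a_N$ is at least $\sigma\sqrt{L/3}/(\sigma\sqrt N)=\sqrt{L/(3N)}$. This holds at every point of $\Ocal$, which has positive measure, so it is unbounded in $L$ and rules out any $C_N$ even almost everywhere. Finally, consider a family of kernels bounded by $\beta$ with depths at most $N$. Each scalar projection, divided by $\beta$, is an admissible residue test at level $N$, so the family's envelope is at most $\beta\D_N^\res$, and the failure transfers.

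The main obstacle I expect is the case $k<0$ in the residue computation, where some care with the median and the conditional-expectation bound is needed. Everything else is routine bookkeeping.
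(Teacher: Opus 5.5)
Your proposal is correct and follows essentially the same route as the paper: symmetry of $S_n$ makes zero a median, the ball-by-ball computation for $k\geq0$ and $k<0$ via $\delta_{B,N}(f)=\delta_B(E_N^Bf)$ gives the exact values, and Cauchy--Schwarz together with the fourth-moment H\"older bound gives \eqref{eq:sqrt}. The only point to watch is the case $k<0$: there the crude Jensen bound $q^ka_L$ does not by itself give $\leq a_{\min(N,L)}$ when $N<L$, so that step rests on your refined formula $q^ka_{\min(N+k,L)}$, exactly as in the paper.
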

\begin{proof}
Symmetry makes zero a median of $S_j$, so its best $L^1$ distance to a
constant equals $a_j$. Conditional Jensen gives $a_{j+1}\geq a_j$.
If $B=B_k(x)\subseteq\Ocal$ with $0\leq k<L$, the first $k$ digits are
fixed and the remaining $L-k$ digits are independent. Put $c=\sum_{j=0}^{k-1}\varphi(d_j(x))$.
With normalized Haar measure on $B$, the functions $f_L$ and
$E_N^Bf_L$ have the respective distributions
\[
 c+S_{L-k},
 \qquad
 c+S_{\min(N,L-k)}.
\]
For $k\geq L$, both are constant. It follows that the largest finite-depth
distance on balls contained in $\Ocal$ is $a_{\min(N,L)}$, attained at
$B=\Ocal$. The full distance and mean oscillation are $a_L$ there.

It remains to check larger balls, which is essential for the full maximal
operator. If $B=B_k(x)\supset\Ocal$ with $k<0$, then
$\mu(\Ocal)/\mu(B)=q^k$ and $(f_L)_B=0$.
The full distribution consists of a symmetric copy of $S_L$ with weight
$q^k$ and an atom at zero. Its median distance and mean oscillation are
$q^ka_L\leq a_L$.
If $k+N\leq0$, every depth-$N$ average is zero.
If $k+N>0$, the conditional average equals
$S_{\min(k+N,L)}$ on $\Ocal$ and zero elsewhere. Its median is zero,
so
\[
 \delta_{B,N}(f_L)=q^k a_{\min(k+N,L)}
                \leq a_{\min(N,L)}.
\]
This proves both identities, including all ancestor scales.

The upper estimate in \eqref{eq:sqrt} is Cauchy--Schwarz.
Independence gives
\[
 \E S_L^2=L\sigma^2,\qquad
 \E S_L^4=L\sigma^2+3L(L-1)\sigma^4
          \leq3L^2\sigma^4,
\]
since $\sigma^2\geq2/3$. H\"older's inequality, applied to
$|S_L|^{2/3}|S_L|^{4/3}$, yields
\[
 \E S_L^2\leq(\E|S_L|)^{2/3}(\E S_L^4)^{1/3}.
\]
Rearrangement proves the lower bound. The claimed ratio follows.
Every scalar normalized depth-at-most-$N$ measurement is bounded by
$\D_N^\res$. For a vector-valued kernel with
$\norm\theta_\infty\leq A$, scalarizing by its dual unit ball gives
$\D_\theta\leq A\D_N^\res$. Taking a supremum over a uniformly bounded
family proves the final assertion.
\end{proof}

The obstruction holds pointwise on $\Ocal$, a set of positive measure.
Kova{\v c}'s Euclidean result establishes a boundedness separation
in a specified translation-invariant Banach space
\cite{Kovac2026BanachDistinguishes}. Establishing such a norm separation
for the residue families considered here is a further question.

\section{Fourier classification of quadratic diamond energy}\label{sec:energy}
The preceding obstruction concerns a supremum over scales.
Quadratic accumulation retains information across scales through
the orthogonal decomposition of nested ball averages. This connects
the construction with the classical BMO--Carleson framework
\cite{FeffermanStein1972}. Over $\mathbb Q_p^d$, Kim characterizes
Carleson measures through embeddings of maximal and positive convolution
operators \cite[Theorem~4.7]{Kim2009}. The energy considered here is
generated by translates of a fixed mean-zero residue kernel. Its finite
Fourier weights classify the energy and give the optimal constants for
BMO recovery.

\subsection{One-level measurements and their spectra}
Let $\mathbb k=\Ocal/t\Ocal$, viewed as an additive finite group of
order $q$, and equip $\R^r$ with its Euclidean norm. Let
$v:\mathbb k\to\R^r$ satisfy $\sum_a v(a)=0$.
For a ball $I=c+t^k\Ocal$, label its children by
$I_a=c+t^ka+t^{k+1}\Ocal$, and write $F(a)=f_{I_a}$.
If $x\in I_b$, then
\begin{equation}\label{eq:translated-measurement}
 T_{v,k}f(x)=H_vF(b):=\frac1q\sum_{a\in\mathbb k}v(a-b)F(a).
\end{equation}
Changing $c$ relabels the children by a common additive translation and
does not change the following energy:
\begin{equation}\label{eq:local-energy}
 e_v(I;f)=\frac1{\mu(I)}\int_I\norm{T_{v,k}f(x)}^2\,d\mu(x)
         =\frac1q\sum_{b\in\mathbb k}\norm{H_vF(b)}^2.
\end{equation}
Translation of the child labels is diagonalized by additive
characters. An additive character of $\mathbb k$ is a homomorphism
\[
 \chi:(\mathbb k,+)\longrightarrow\{z\in\C:|z|=1\}.
\]
Write $\widehat{\mathbb k}$ for the character group and $1$ for
the trivial character. We use the normalized Fourier transform
\[
 \widehat F(\chi)=\frac1q\sum_a F(a)\overline{\chi(a)},\qquad
 \widehat v(\chi)=\frac1q\sum_a v(a)\overline{\chi(a)},\qquad
 w_v(\chi)=\norm{\widehat v(\chi)}_{\C^r}^{2}.
\]
Here $\C^r$ has the usual complexification of the Euclidean norm.
Then $w_v(1)=0$ and $w_v(\bar\chi)=w_v(\chi)$.

\begin{proposition}[Spectral identity]\label{prop:fourier-energy}
For every ball and every input with finite child averages,
\begin{equation}\label{eq:spectral-energy}
 e_v(I;f)=\sum_{\chi\ne1}w_v(\chi)|\widehat F(\chi)|^2.
\end{equation}
Thus the weight function $w_v$ classifies the one-level energy exactly.
The optimal comparison constants between two such energies are determined
by the coordinatewise comparison of their weights.
\end{proposition}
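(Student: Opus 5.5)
The identity \eqref{eq:spectral-energy} is a finite Plancherel computation on the additive group $\mathbb k$. The plan is to recognize $H_v$ as a correlation operator, diagonalize it by additive characters, and then read off the comparison constants from the diagonal form.

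First I check that $H_vF(b)=\frac1q\sum_a v(a-b)F(a)$ is, up to reflection, a normalized convolution. Put $\check v(c)=v(-c)$. Then $H_vF(b)=\frac1q\sum_a\check v(b-a)F(a)=(\check v*F)(b)$, with the normalized convolution $(g*F)(b)=\frac1q\sum_a g(b-a)F(a)$. Applied componentwise in $\R^r$, the normalized transform satisfies $\widehat{g*F}(\chi)=\widehat g(\chi)\widehat F(\chi)$. For reflection, $\widehat{\check v}(\chi)=\frac1q\sum_a v(-a)\overline{\chi(a)}=\frac1q\sum_a v(a)\chi(a)=\widehat v(\bar\chi)$. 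Since $v$ is real, this equals $\overline{\widehat v(\chi)}$, which has the same complex norm. Hence
\[
 \widehat{H_vF}(\chi)=\overline{\widehat v(\chi)}\,\widehat F(\chi)\in\C^r .
\]

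Next I apply Parseval for the normalized transform, again componentwise:
\[
 \frac1q\sum_b\norm{G(b)}^2=\sum_\chi\norm{\widehat G(\chi)}_{\C^r}^2 .
\]
This follows from orthogonality of characters, $\frac1q\sum_a\chi(a)\overline{\chi'(a)}=\delta_{\chi\chi'}$, and $|\widehat{\mathbb k}|=q$. Taking $G=H_vF$ in \eqref{eq:local-energy} gives $e_v(I;f)=\sum_\chi w_v(\chi)|\widehat F(\chi)|^2$. The trivial character drops out because $w_v(1)=\norm{\frac1q\sum_a v(a)}^2=0$ by cancellation. The hypothesis that the child averages are finite is exactly what makes $F$ a well-defined vector in $\R^{\mathbb k}$. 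The independence from the choice of $c$ is already noted before the proposition, so nothing further is needed there.

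For classification and optimal constants, I take two kernels $v,v'$ and ask for the least $C$ with $e_v(I;f)\le C\,e_{v'}(I;f)$ for all $I$ and $f$. The key step is realizability. At a fixed ball, any real vector $F\in\R^{\mathbb k}$ is attained by an input constant on the children, as in the remark after \eqref{eq:residue-average}. The real vectors $F$ produce the functions $\chi\mapsto|\widehat F(\chi)|^2$. These are arbitrary nonnegative functions on $\widehat{\mathbb k}\setminus\{1\}$ subject only to symmetry under $\chi\mapsto\bar\chi$.

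To isolate a single conjugate pair $\{\chi,\bar\chi\}$, take $F=\operatorname{Re}\chi$. If $\chi$ is real-valued, then $\widehat F$ is supported on $\chi$ alone. Otherwise $\widehat F$ is supported on $\{\chi,\bar\chi\}$, and both weights agree there since $w(\bar\chi)=w(\chi)$. Testing on these inputs shows that the least constant is
\[
 C_*=\max_{\chi\ne1}\frac{w_v(\chi)}{w_{v'}(\chi)},
\]
with the conventions $0/0=0$ and $a/0=\infty$ for $a>0$. Sufficiency of this constant is immediate from the diagonal form. Equality of the two energies forces $w_v=w_{v'}$, by the same single-pair tests. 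Conversely, equal weights give equal energies, so $w_v$ classifies the one-level energy.

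The only delicate step is this realizability argument: making sure that real inputs can isolate one conjugate pair of characters, and that the symmetry $w(\bar\chi)=w(\chi)$ costs nothing. Everything else is routine finite Fourier analysis.
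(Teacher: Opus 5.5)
Your proposal is correct and follows the paper's proof essentially step for step. The paper also computes $\widehat{H_vF}(\chi)=\widehat v(\bar\chi)\widehat F(\chi)$, which equals your $\overline{\widehat v(\chi)}\,\widehat F(\chi)$ because $v$ is real, and then applies Parseval. For necessity of the weight comparison it tests on a real character or on $\operatorname{Re}\chi$, exactly as you do, and obtains sufficiency termwise.
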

\begin{proof}
Taking the Fourier transform of \eqref{eq:translated-measurement} in $b$
and substituting $z=a-b$ gives
$\widehat{H_vF}(\chi)=\widehat v(\bar\chi)\widehat F(\chi)$.
Parseval's identity proves \eqref{eq:spectral-energy}.
For necessity of any weight comparison, use a real character if
$\chi=\bar\chi$, and otherwise the real part of $\chi$.
The latter has Fourier support exactly on $\{\chi,\bar\chi\}$,
where the two weights are equal. Sufficiency follows termwise.
\end{proof}

\subsection{The Carleson functional and BMO recovery}
For $f\in L^2_{\loc}(K)$ define
\begin{align*}
 \mathcal C_v(f)
 =\sup_B\left(\frac1{\mu(B)}
       \sum_{I\subseteq B}\mu(I)e_v(I;f)\right)^{1/2},\quad
 \norm f_{\BMO_2}
 =\sup_B\left(\frac1{\mu(B)}\int_B|f-f_B|^2\,d\mu\right)^{1/2}.
\end{align*}
The sum runs over $B$ and all its descendant balls. The summands are
nonnegative, so the definition is unambiguous in $[0,\infty]$.
We use quadratic mean oscillation because the orthogonal decomposition
of ball averages is an $L^2$ identity. For $K=\mathbb Q_p$, this
seminorm is equivalent to the usual $\BMO_1$ seminorm by the
John--Nirenberg inequality and its moment consequence
\cite[Theorem~5.16 and Corollary~5.17]{Kim2009}.

The connection with Carleson measures has an explicit form. Let
$\delta_a$ denote the Dirac measure at $a\in K$, and define a positive
Borel measure on $K\times K$ by
\[
 d\rho_{v,f}(x,h)=\sum_{k\in\Z}
       \norm{T_{v,k}f(x)}^2\,d\mu(x)\,d\delta_{t^k}(h).
\]
For $B=B_n(c)$ put $\widehat B=B\times t^n\Ocal$. Since
$t^k\in t^n\Ocal$ exactly when $k\geq n$, Tonelli's theorem and
partitioning $B$ into its balls of scale $k$ give
\begin{align*}
 \rho_{v,f}(\widehat B)
 =\sum_{k=n}^{\infty}\int_B\norm{T_{v,k}f(x)}^2\,d\mu(x)
 =\sum_{I\subseteq B}\mu(I)e_v(I;f).
\end{align*}
Consequently
\[
 \sup_B\frac{\rho_{v,f}(\widehat B)}{\mu(B)}=\mathcal C_v(f)^2.
\]
If this quantity is finite, every compact subset of $K\times K$ is
contained in such a box, so $\rho_{v,f}$ is locally finite and Radon.
For $K=\mathbb Q_p$, these boxes and this Carleson norm are precisely
those of \cite[Definition~4.1, with $d=1$]{Kim2009}.

\begin{theorem}[Energy reconstruction and classification]\label{thm:energy}
Let
$\lambda_-(v)=\min_{\chi\ne1}w_v(\chi)$ and
$\lambda_+(v)=\max_{\chi\ne1}w_v(\chi)$.
Then
\begin{equation}\label{eq:energy-bmo}
 \sqrt{\lambda_-(v)}\norm f_{\BMO_2}
 \leq\mathcal C_v(f)
 \leq\sqrt{\lambda_+(v)}\norm f_{\BMO_2}.
\end{equation}
If $\lambda_-(v)=0$, the lower bound means $0\leq\mathcal C_v(f)$;
if $v=0$, the functional vanishes identically.
The displayed constants are optimal. In particular, $\mathcal C_v$ is
equivalent to $\BMO_2$ if and only if
\begin{equation}\label{eq:nondegenerate-spectrum}
 \widehat v(\chi)\ne0\quad\text{for every }\chi\ne1.
\end{equation}
Let $w:\mathbb k\to\R^{r'}$ be another mean-zero kernel, with
Euclidean norm on its output space.
More generally, for $C\geq0$,
\begin{equation}\label{eq:energy-comparison}
 \mathcal C_v(f)\leq C\mathcal C_w(f)\text{ for all }f\in L^2_{\loc}(K)
 \quad\Longleftrightarrow\quad
 w_v(\chi)\leq C^2w_w(\chi)\text{ for every }\chi\ne1.
\end{equation}
Consequently equality of the full Carleson functionals is equivalent to
equality of their Fourier weights.
\end{theorem}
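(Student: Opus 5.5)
The plan is to reduce everything to a martingale (Haar) orthogonality identity on a fixed ball, and then to feed in the one-level spectral identity of \cref{prop:fourier-energy}.

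Fix $B$ and write $f-f_B=\sum_{I\subseteq B}\Delta_I f$ on $B$. Here $\Delta_I f=\sum_a (f_{I_a}-f_I)\one_{I_a}$ is the martingale difference at $I$; the expansion converges in $L^2(B)$ for $f\in L^2(B)$ by martingale convergence, using that locally constant functions are dense. Orthogonality of the martingale differences gives
\[
 \frac1{\mu(B)}\int_B|f-f_B|^2\,d\mu=\frac1{\mu(B)}\sum_{I\subseteq B}\mu(I)\,\frac1q\sum_{a}|F_I(a)-\overline{F_I}|^2 ,
\]
where $F_I(a)=f_{I_a}$ and $\overline{F_I}=f_I$. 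Finite Parseval on $\mathbb k$ rewrites the inner sum as $\sum_{\chi\ne1}|\widehat{F_I}(\chi)|^2$. By \cref{prop:fourier-energy}, $e_v(I;f)=\sum_{\chi\ne1}w_v(\chi)|\widehat{F_I}(\chi)|^2$, and this lies between $\lambda_-(v)$ and $\lambda_+(v)$ times that quantity. We sum over $I\subseteq B$ with weights $\mu(I)$, divide by $\mu(B)$, and take the supremum over $B$; the two sides are treated separately, both as values in $[0,\infty]$, and this yields \eqref{eq:energy-bmo}. The degenerate remarks are immediate: if $v=0$ then $w_v\equiv0$.

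For \eqref{eq:energy-comparison}, sufficiency is the same termwise argument, since the weighted sums satisfy $\sum_\chi w_v|\widehat F_I|^2\le C^2\sum_\chi w_w|\widehat F_I|^2$ for each $I$. For necessity, and for optimality of the constants in \eqref{eq:energy-bmo}, we use one-level test inputs. Fix $\chi\ne1$ and set $g=\chi$ if $\chi$ is real, and $g=\operatorname{Re}\chi$ otherwise. Then take $f=g(d_0(\cdot))\one_{\Ocal}$, so that the only nonzero martingale difference is at $I=\Ocal$, where $F=g$. The main point to check is that ancestors $I\supsetneq\Ocal$ contribute nothing. This holds because $f$ has mean zero on $\Ocal$, so all child averages of an ancestor vanish, except the one containing $\Ocal$, which also has average zero. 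Descendants contribute nothing because $f$ is constant on them.

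Hence the Carleson sums are supported on balls $B\supseteq\Ocal$. Among these, $B=\Ocal$ maximizes the normalized quantity, since larger $B$ only increase $\mu(B)$. This gives $\mathcal C_v(f)^2=w_v(\chi)\sum_{\chi'}|\widehat g(\chi')|^2$. Here the Fourier support of $g$ is $\{\chi,\bar\chi\}$, and on that set $w_v$ takes the common value $w_v(\chi)=w_v(\bar\chi)$. In the same way, $\|f\|_{\BMO_2}^2$ equals the same Fourier mass, by the identity of the first paragraph with the same supremum analysis.

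Consequently the comparison $\mathcal C_v\le C\mathcal C_w$ applied to this $f$ forces $w_v(\chi)\le C^2w_w(\chi)$. Choosing $\chi$ to attain $\lambda_\pm(v)$ shows the constants in \eqref{eq:energy-bmo} are attained, hence optimal. Equivalence with $\BMO_2$ holds if and only if $\lambda_->0$, which is exactly \eqref{eq:nondegenerate-spectrum}; when $\lambda_-=0$, the test function above has $\mathcal C_v(f)=0$ but $\|f\|_{\BMO_2}>0$. Finally, equality of the functionals follows by applying \eqref{eq:energy-comparison} with $C=1$ in both directions.

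The main obstacle is the first step: justifying the orthogonal expansion and its exchange with the supremum for general $f\in L^2_{\loc}$. I would handle it by working on each fixed $B$, where $f\in L^2(B)$, and invoking $L^2$ martingale convergence on the filtration by depth-$N$ descendants, with $E_N^B$ as in \cref{thm:residue-limit}.
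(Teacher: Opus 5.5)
Your proposal is correct and follows the paper's proof essentially step for step: you decompose $f-f_B$ orthogonally into one-level differences and combine child-space Parseval with \cref{prop:fourier-energy} to get the two-sided bounds, and you use the single-level test input built from $\chi$ or $\operatorname{Re}\chi$ (whose only nonzero difference sits at one ball, with ancestors and descendants contributing nothing) for optimality and necessity. The differences are cosmetic: you fix the test ball to be $\Ocal$, and you phrase the $L^2$ convergence of $E_N^Bf$ as martingale convergence, where the paper argues via density of locally constant functions and contractivity.
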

\begin{proof}
Let $d_If$ be the function supported on $I$ with value $f_J-f_I$ on
each child $J$ of $I$. Parseval on the child space gives
\[
 \frac1{\mu(I)}\norm{d_If}_{L^2(I)}^2
 =\frac1q\sum_a|F(a)-f_I|^2
 =\sum_{\chi\ne1}|\widehat F(\chi)|^2.
\]
Thus \cref{prop:fourier-energy} bounds $\mu(I)e_v(I;f)$ between
$\lambda_-(v)\norm{d_If}_2^2$ and
$\lambda_+(v)\norm{d_If}_2^2$.
The differences $d_If$ for $I\subseteq B$ are orthogonal: two disjoint
supports give zero pairing, and on a proper descendant the coarser
difference is constant while the finer difference has integral zero.
For every $N\geq1$, the finite sum satisfies
\[
 \sum_{j=0}^{N-1}
 \ \sum_{\substack{I\subseteq B\\
                   \mu(I)=q^{-j}\mu(B)}}d_If
 =E_N^Bf-f_B
 \qquad\text{on }B.
\]
Density of locally constant
functions and $L^2$ contractivity of conditional averaging give convergence
to $f-f_B$ in $L^2(B)$. Hence
\begin{equation}\label{eq:haar-energy}
 \sum_{I\subseteq B}\norm{d_If}_2^2=\int_B|f-f_B|^2\,d\mu.
\end{equation}
Summing and taking the supremum proves \eqref{eq:energy-bmo}.
The sufficiency in \eqref{eq:energy-comparison} follows in the same way
from the termwise spectral comparison.

For necessity and optimality, take a mean-zero child profile $F$ on a fixed
ball $B$, extend it constantly over each child, and put it equal to zero
outside $B$. All its differences vanish except $d_Bf=f$.
Measurements on strict descendants vanish by cancellation. Measurements
on strict ancestors vanish because $f$ has mean zero on the one child
containing its support. Therefore
\[
 \mathcal C_v(f)^2=e_v(B;f),\qquad
 \norm f_{\BMO_2}^2=\frac1q\sum_a|F(a)|^2.
\]
Ancestors contribute only a smaller measure ratio in these suprema.
Choose $F$ supported in Fourier space on a minimizing or maximizing
character pair, as in \cref{prop:fourier-energy}. This proves optimality
and necessity of every weight comparison. If a weight is zero, this test
is a nonconstant input with zero energy, which rules out BMO recovery.
\end{proof}

\begin{example}[A scalar tight measurement]\label{ex:tight}
Take
\[
 v(0)=1,\qquad v(a)=-\frac1{q-1}\quad(a\ne0).
\]
Then $\norm v_\infty=1$ and
$\widehat v(\chi)=1/(q-1)$ for all $\chi\ne1$.
Consequently
\begin{equation}\label{eq:tight}
 \mathcal C_v(f)=\frac1{q-1}\norm f_{\BMO_2}.
\end{equation}
Indeed, \eqref{eq:translated-measurement} reads
$H_vF(b)=(F(b)-f_I)/(q-1)$. Equivalently,
\[
 T_{v,k}f(x)=\frac{\Delta_kf(x)}{q-1},
 \qquad
 \D_vf=\frac{1}{q-1}\D_1f,
\]
where $\D_1$ is the basic radial operator of Section~\ref{sec:radial}.
Thus the same residue kernel generates a radial maximal operator
and an energy functional recovering $\BMO_2$ exactly.
At a fixed center and scale this is a single scalar measurement.
Averaging its square over the centers detects every child direction,
and summing over scales gives \eqref{eq:tight}. In contrast, its maximal
operator is bounded by $\D_1^\res$, so
\cref{thm:resolution-obstruction} applies to it.
\end{example}

\begin{corollary}[Spectral moduli for energy]\label{cor:energy-moduli}
As $v$ ranges over mean-zero kernels \(v:\mathbb k\to\mathbb R^r\), allowing \(r\) to vary, its energy invariants
are exactly the functions
\[
 w:\widehat{\mathbb k}\setminus\{1\}\to[0,\infty),
 \qquad w(\chi)=w(\bar\chi).
\]
Every such invariant is realized by a real scalar kernel. For nonzero
invariants, normalization $\sum_{\chi\ne1}w(\chi)=1$ gives a simplex
with the conjugate coordinates constrained to agree. Unit-coordinate
changes permute the characters. Mutual comparison of energies is
equivalent to equality of the supports of their weights.
\end{corollary}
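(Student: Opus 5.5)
The plan is to treat the corollary as a packaging of \cref{prop:fourier-energy} and \cref{thm:energy}, plus one realization statement and one equivariance computation. I would proceed in four steps.

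\emph{Necessity of the constraints.} For any mean-zero $v:\mathbb k\to\R^r$ we have $w_v(1)=0$ and $w_v\geq0$. Since $v$ is real, $\widehat v(\bar\chi)=\overline{\widehat v(\chi)}$ coordinatewise, so $w_v(\bar\chi)=w_v(\chi)$. Thus every invariant is a nonnegative conjugation-symmetric function on $\widehat{\mathbb k}\setminus\{1\}$. By \eqref{eq:energy-comparison} with $C=1$ in both directions, equality of the Carleson functionals is equivalent to equality of weights. Hence the weight function is precisely the complete invariant.

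\emph{Realization by a scalar kernel.} This is the main point. Given an admissible $w$, define complex coefficients $c(\chi)=\sqrt{w(\chi)}$ for $\chi\ne1$ and $c(1)=0$. These are real and nonnegative, and $c(\bar\chi)=c(\chi)$. Set $v(a)=\sum_{\chi}c(\chi)\chi(a)$, so that $\widehat v=c$ under the normalized transform (with the factor $1/q$ checked against the convention). Conjugation symmetry of $c$ makes $v$ real-valued. The vanishing coefficient at the trivial character gives $\sum_av(a)=0$. Then $w_v(\chi)=|c(\chi)|^2=w(\chi)$, so every invariant is realized with $r=1$. The only care needed is matching the normalization $\widehat v(\chi)=q^{-1}\sum_a v(a)\overline{\chi(a)}$ with Fourier inversion $v(a)=\sum_\chi\widehat v(\chi)\chi(a)$.

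\emph{Normalization and the simplex.} Scaling $v$ by $s>0$ multiplies $w_v$ by $s^2$, and $\sum_{\chi\ne1}w(\chi)>0$ for a nonzero invariant. Dividing by this sum therefore yields the simplex intersected with the linear subspace $\{w(\chi)=w(\bar\chi)\}$.

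\emph{Unit action.} For $u\in\mathbb k^\times$ and $(u\cdot v)(a)=v(u^{-1}a)$, substituting $a=ub$ gives $\widehat{u\cdot v}(\chi)=\widehat v(\chi_u)$, where $\chi_u(b)=\chi(ub)$. Since $\chi\mapsto\chi_u$ is a bijection of the nontrivial characters, unit-coordinate changes permute the weights.

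\emph{Mutual comparison.} By \eqref{eq:energy-comparison}, $\mathcal C_v\leq C\mathcal C_w$ for some finite $C$ holds if and only if $w_v(\chi)\leq C^2w_w(\chi)$ for all $\chi\ne1$. On a finite set this is equivalent to $\operatorname{supp}w_v\subseteq\operatorname{supp}w_w$: choose $C^2=\max w_v/\min_{\operatorname{supp}w_w}w_w$. Applying this in both directions gives equality of supports.
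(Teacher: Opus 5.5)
Your proposal is correct and follows the paper's proof. The paper uses the same realization: $\widehat v(1)=0$ and $\widehat v(\chi)=\sqrt{w(\chi)}$, inverted to give a real mean-zero scalar kernel, with the remaining assertions deduced from \cref{thm:energy}. Your explicit computation $\widehat{u\cdot v}(\chi)=\widehat v(\chi_u)$ and your choice of comparison constant on the finite support spell out steps that the paper leaves implicit.
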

\begin{proof}
Set $\widehat v(1)=0$ and
$\widehat v(\chi)=\sqrt{w(\chi)}$ for $\chi\ne1$.
The inverse finite Fourier transform is real because the coefficients
on conjugate characters are conjugate, and it has mean zero.
Its weight function is $w$. All remaining assertions follow from
\cref{thm:energy}; on a finite character set, two nonnegative weight
functions with the same support dominate one another.
\end{proof}

The energy invariant forgets Fourier phases and records squared amplitudes.
The angular maximal invariant records a full norm on an angular quotient.
The general maximal invariant records a convex hull across all dilations.
These are three distinct, explicitly identified kinds of information.

\section{Scope and further questions}\label{sec:scope}
The classification depends on the kernel class, the input space,
and the equivalence relation. For angular kernels,
\cref{thm:angular-rigidity} gives a factorization criterion for
pointwise comparison on $L^1_{\loc}(X)$, with the same optimal
constant. Kernels constant on a fixed finite measurable partition
of the direction space admit finite-dimensional models described
by convex bodies, subspaces, and, for Hilbert-valued kernels,
Gram matrices (\cref{thm:finite-moduli,thm:gram}).
For radial residue kernels, \cref{thm:radial-classification}
gives a polynomial division criterion for comparison on
$L^\infty(K)$ and determines the optimal constants.
As \cref{ex:radial-domain} shows, this criterion does not extend
to all of $L^1_{\loc}(K)$.

At each fixed residue level and for a fixed normed output space,
residue kernels admit finite-dimensional parameter spaces and
exact one-scale models with fixed residue-class labels.
For the full maximal operators on $L^1_{\loc}(K)$,
\cref{thm:convex-invariant} gives a complete invariant:
the closed convex hull of the scalar averaging densities
obtained from all scales and the dual unit ball.
Equality of these sets characterizes equality of the operators,
while inclusion up to a constant factor characterizes
pointwise domination.

Several questions remain. The first is to determine when
comparison of two full residue maximal operators admits a
finite certificate expressed in terms of their residue tables,
extending \cref{prop:lp} beyond finite collections of scales.
The second is to describe the stabilizer strata of the
Gram-matrix quotients for specific local fields and determine
how these strata depend on the arithmetic unit-group action.
The third is to identify translation-invariant function spaces
and hypotheses on a fixed angular kernel under which its
maximal operator and sharp oscillation satisfy comparable
function-space norm estimates.
A further question is how the spectral classification in
\cref{thm:energy} extends to measurements involving several
consecutive scales, and whether a matrix-valued spectral
invariant describes the resulting Carleson functionals.

\appendix

\section{A guide to the equivalence relations}\label{app:guide}
For clarity, the classifications proved in the paper are collected below.
The distinction between a kernel, one average, and a full maximal operator
is part of every statement.

{\footnotesize
\begin{longtable}{@{}P{0.26\textwidth}P{0.28\textwidth}P{0.3\textwidth}@{}}

\toprule
Object & Equivalence & Complete invariant \\
\midrule
\endfirsthead

\toprule
Object & Equivalence & Complete invariant \\
\midrule
\endhead

General diamond maximal operator on \(L^1_{\loc}(X)\)
& Exact equality
& Closed convex hull of scalarized dilates, \cref{thm:convex-invariant}\\[1pt]
\midrule

Scalar angular operator
& Mutual pointwise domination
& Projective angular kernel, \cref{cor:scalar}\\[1pt]
\midrule

Angular operator on a fixed partition, normed output
& Exact equality
& Symmetric convex body \(C_v\), \cref{thm:finite-moduli}\\[1pt]
\midrule

Angular operator on a fixed partition, normed output
& Mutual pointwise domination
& Subspace \(\spanof C_v\), \cref{thm:finite-moduli}\\[1pt]
\midrule

Finite angular operator, Hilbert output
& Exact equality
& Gram matrix \(G_v\), \cref{thm:gram}\\[1pt]
\midrule

Residue table
& Scaling and unit coordinates
& \(\mathbb P(W_N)/U_N\), \cref{sec:residue}\\[1pt]
\midrule

Residue one-scale norm measurement
& Exact equality
& \(C_v\), or \(G_v\) for Hilbert outputs\\[1pt]
\midrule

Radial residue operator on bounded inputs
& Mutual pointwise domination
& Unit-circle zero divisor, \cref{thm:radial-classification}\\[1pt]
\midrule

Radial residue operator on bounded inputs
& Exact equality
& Filter up to sign and shift, \cref{thm:radial-classification}\\[1pt]
\midrule

Carleson functional \(\mathcal C_v\)
& Exact equality
& Fourier weights \(w_v\), \cref{thm:energy}\\

\bottomrule
\end{longtable}
}
For angular objects, geometric equivalence is obtained by quotienting these
invariants by the stated direction-group action. For residue tables, this
last operation does not remove the additional identifications introduced
by the supremum over scales.

\end{document}